\newtheoremstyle{mine}
{\baselineskip}
{\baselineskip}
{\itshape}
{
}
{\bfseries}
{.}
{.5em}
{#1 #2\ifx#3\relax\else~(#3)\fi}
\theoremstyle{mine}
\newtheorem{theorem}{Theorem}
\numberwithin{theorem}{section}
\newtheorem{corollary}[theorem]{Corollary}
\newtheorem{proposition}[theorem]{Proposition}
\newtheorem{lemma}[theorem]{Lemma}
\newtheorem{definition}[theorem]{Definition}
\newtheorem{question}{Question}
\numberwithin{equation}{section}
\theoremstyle{remark}
\newtheorem{remark}{Remark}
\colorlet{shadecolor}{blue!10}
\def\rm{\reversemarginpar}
\let\qed=\QED
\renewcommand{\epsilon}{\varepsilon}
\newcommand{\R}{\mathbb{R}}
\newcommand{\Cb}{\mathbf{C}}
\newcommand{\Z}{\mathbb{Z}}
\newcommand{\N}{\mathbb{N}}
\def\calB{\mathcal{B}}
\def\calD{\mathcal{D}}
\def\calE{\mathcal{E}}
\def\calN{\mathcal{N}}
\def\P{\mathbb{P}} 
\def\E{\mathbb{E}} 
\def\md{\mid}
\def\Bb#1#2{{\def\md{\bigm| }#1\bigl[#2\bigr]}}
\def\Eb{\Bb\E}
\def\EFK#1#2#3{{\def\md{\bigm| } \E_{#1}^{\,#2}  \bigl[  #3 \bigr]}}
\def\<#1{\langle #1\rangle}
\newcommand{\red}[1]{{\color{red}#1}}
\newcommand{\purple}[1]{{\color{purple}#1}}
\newcommand{\xinxin}[1]
{\textcolor{blue}{*}\marginpar[\textcolor{blue} {  \raggedleft  \footnotesize \textbf{Xinxin:}  #1 }  ]{ \textcolor{blue} { \raggedright  \footnotesize  \textbf{Xinxin:} #1 }  }}
\def\nn{\nonumber}
\def\bi{\begin{itemize}}  
\def\ei{\end{itemize}}
\def\bnum{\begin{enumerate}} 
\def\enum{\end{enumerate}}
\def\ni{\noindent}
\def\bf{\bfseries}
\def\PPP{\mathrm{PPP}}
\def\M{\mathscr{M}}
\newcommand{\ind}[1]{\mathbf{1}_{\left\{#1\right\}}}
\newcommand{\mS}{\underline{S}}
\DeclareMathOperator*{\esssup}{ess\,sup}
\title
[Fixed points of BBM]
{
The fixed points of Branching Brownian Motion
}
\author{Xinxin Chen, Christophe Garban,  Atul Shekhar}
\address
{Universit\'e Claude Bernard Lyon 1, CNRS UMR 5208, Institut Camille Jordan, 69622 Villeurbanne, France}
\email{chen@math.univ-lyon1.fr; shekhar@math.univ-lyon1.fr}
\address
{Universit\'e Claude Bernard Lyon 1, CNRS UMR 5208, Institut Camille Jordan, 69622 Villeurbanne, France \, and Institut Universitaire de France (IUF)}
\email{garban@math.univ-lyon1.fr}
\begin{document}

\maketitle

\begin{center}
{\em Dedicated to the memory of Tom Liggett}
\end{center}

\begin{abstract}
In this work, we characterize all the point processes $\theta=\sum_{i\in \N} \delta_{x_i}$ on $\R$ which are left invariant under branching Brownian motions with critical drift $-\sqrt{2}$. Our characterization holds under the only assumption that $\theta(\R_+)<\infty$ almost surely. 
\end{abstract}


\section[\;\;\;\;Introduction]{Introduction}\label{intro}

\subsection{Context.} 

Binary branching Brownian motion\footnote{For simplicity, we shall only consider the case of binary branching in this paper, but the main results hold also under the more general setting of \cite{ABK13}.}  can be described as follows: particles evolve independently of each other in $\R$ and split into two independent particles at rate one. If one starts such a BBM process with a single particle at the origin at time 0, it is well known that at time $t$, there will be  $n(t)\approx e^t$ particles whose positions will be denoted by $\{\chi_k(t)\}_{1\leq k \leq n(t)}$. Furthermore the rightmost particle at time $t$, i.e. $M_t:=\sup_{k\leq n(t)} \chi_k(t)$ will be found modulo $O(1)$-fluctuations at distance $m(t)= \sqrt{2} t - \frac{3}{2\sqrt{2}} \log_+(t)$. See for example \cite{bovier-book,shi-book} and references therein.

This stochastic process has attracted a lot of attention in many different contexts. For example it happens to be  strongly  connected with PDEs since McKean's observation \cite{McKean75} that the fluctuations of rightmost particles are described by the solutions of the FKPP equation
\begin{align}\label{}
u_t = \frac 1 2 u_{xx} + u(1-u)\,,\nn
\end{align}
and in particular by the {\em traveling wave} solutions $x\mapsto \omega_\lambda(x)$ of this non-linear PDE (especially the critical one at $\lambda_c=\sqrt{2}$). 
See in particular the works \cite{bramson78,bramson83} and the book \cite{bovier-book}. 
In a different context, BBM also attracted a lot attention in physics via its relationships with the GREM model or the fact that (up to a minus sign) leading particles can be viewed as the lowest energies of a directed polymer in a random medium (in a mean-field regime), see for example \cite{derrida2009,derrida2011,bovier-book}. This model has also natural connections with mathematical biology due to its links with reaction-diffusion models.

As it has been pioneered since the work of Brunet-Derrrida \cite{derrida2009,derrida2011}, it is natural to consider the BBM process viewed from its frontier. This means that one considers the point process $\calE_t$  of the BBM particles shifted by $m(t)$, i.e.
\begin{equation}\label{abk-conv}
    \mathcal{E}_t := \sum_{k=1}^{n(t)}\delta_{\chi_k(t)- m(t)}\,.
\end{equation}
It has been proved independently in the seminal works \cite{ABK13,aidekon13}) that this shifted BBM converges as a random point process to a limiting Point Process $\calE_\infty$ with an interesting structure. Indeed $\calE_\infty$ has the law of an explicit decorated Poisson Point Process. The intensity of its underlying  PPP is rather simple ($e^{-\sqrt{2} x} dx$ up to a random translation coming from the so-called {\em derivative martingale}) but the law of its decoration turns out to more intriguing. See Section \ref{ss.extreme} for more details as well as the reference \cite{cortines2019}.

In this work, we are interested in identifying all the {\em fixed points of BBM.} Clearly one cannot hope to find any fixed point if one considers finite clouds of particles (as the number of particles will keep growing with $t$ and as such will not be preserved) so we will need to look for fixed points among infinite point processes. The convergence of BBM viewed from its frontier $\calE_t \to \calE_\infty$ leads us to the correct notion: indeed starting at any large time $t$, each particle in $\calE_t$ will keep evolving at later times $s>t$ independently of the other particles in $\calE_t$ as a BBM minus the deterministic drift 
\[
\hat m(s):= m(s)-m(t) = \sqrt{2}(s-t) - \frac{3}{2\sqrt{2}} \left[\log(s) -\log(t)\right]\,.
\]
This shows, as observed in \cite{ABK13}, that the effect of the log correction in $m(t)$ asymptotically flattens. Letting $t\to \infty$ this motivates the definition of 
the following infinite branching particle system. 

\subsection{Infinite branching particle system and invariant measures.}
At time $0$, we start from a locally finite point process $\theta$ (viewed as a random point in the space  $\calN$ of integer-valued locally finite measures, see Section \ref{ss.state} for more details on the state space/topology). For convenience, we write 
\[\theta = \sum_{i\in I}\delta_{x_i}\]
with $I$ a finite or countable index set. Note that the atoms $x_i$ need not to be distinct. Given $\theta$, for any atom $x_i$, we run an independent Branching Brownian motion started from $x_i$ with critical drift $-\sqrt{2}$ which is denoted by $(\{\chi^{i}_k(t)-\sqrt{2}t; 1\leq k\leq n^i(t)\},{t\ge0})$. Then at time $t\ge0$, we get the following point process
\[
\theta_t=\sum_{i\in I} \sum_{k=1}^{n^i(t)}\delta_{x_i+\chi_k^i(t)-\sqrt{2}t}\,, 
\]
with $\theta_0=\theta$.
Note that new particles keep being created as time increases while the negative drift helps preventing the system from exploding. 
A natural question is to find the fixed points of this branching particle system, i.e. the point processes $\theta$ such that 
\begin{equation}\label{fixedpoint}
\forall t>0, \theta_t \overset{d}= \theta_0.
\end{equation}
We call such point processes the \textit{fixed points of BBM with critical drift}. Equivalently, fixed points are probability measures $\pi$ on the Polish space $\calN$ (Section \ref{ss.state}) which are invariant under BBM with critical drift.
In the case where the drift is stronger (or {\em super-critical}), i.e. when BBM is shifted by $-\lambda t$ with $\lambda>\sqrt{2}$, the fixed points of BBM with super-critical drift $\lambda$ and with an assumption of locally finite intensity measure have been characterized in the work \cite{kabluchko} (see Sections \ref{ss.lit} and \ref{non-critical-drift}). For the critical case $\lambda_c=\sqrt{2}$, it was pointed out in (3.9) of \cite{ABK13} that the above limiting extremal process of branching Brownian motion $\calE_\infty$ gives an example of such a fixed point with critical drift. In this work we give a characterization of all possible fixed points $\theta\sim \pi$ when  $\lambda_c=\sqrt{2}$ and under the unique assumption that $\theta$ has a top particle a.s. (see the subspace $\M$ of $\calN$ in Section \ref{ss.state}). 

Let us add some words of caution here. To define properly the concept of fixed point, we face the following issue of coming down from $-\infty$: the fact the initial point process $\theta_0=\theta\in \calN$ does not necessarily imply that $\theta_t$ will still be locally finite for all $t>0$. The same issue of coming down from $-\infty$ issue is already present in the work
 \cite{liggett78} on {\em independent particle systems} (same negative drift $-\lambda$ but no branching) and which will be a constant source of inspiration through this paper (see Sections \ref{ss.lit} and \ref{ss.attempt}). We will therefore need to be more careful than in the above paragraph when we will define what we mean by an invariant measure for such a process. See in particular Definition \ref{d.FP}.

\subsection{Main result.}\label{ss.main}
To describe all fixed points of BBM with critical drift, we may state our main theorem by relying on the above limiting process $\calE_\infty$, but this would make our main statement quite technical as one would first need to extract the derivative martingale $Z_\infty$ out of the point-process $\calE_\infty$ and then ``quotient it out''. To avoid this we will instead introduce the following slightly simpler process $\bar\calE_\infty$ which has the same law as $\calE_\infty$ except it does not have the additional random translation from the derivative martingale which is inherent to $\calE_\infty$. (See Section \ref{ss.extreme} for the relationship between $\calE_\infty$, $\bar \calE_\infty$ as well as with the more standard re-centred point process $\tilde \calE_\infty$). 
The law of this decorated Poisson Point Process $\bar\calE_\infty$ can be described as follows. 

Let $\mathcal{P}=\sum_{i\ge1}\delta_{p_i}$ be a Poisson point process (PPP) with  intensity $\sqrt{2} e^{-\sqrt{2}x}dx$. For each atom $p_i$ of $\mathcal{P}$, we attach a point process $\mathcal{D}^i=\sum_{j\ge1}\delta_{\mathcal{D}_j^i}$ where $\calD^i$, $i\ge1$ are i.i.d. copies of certain point process $\calD$ (independent of $\mathcal{P}$) called the {\em decoration process.} It is a point process supported on $(-\infty,0]$ with an atom at $0$ and its precise law is described in (6.8) of \cite{aidekon13}. See also Section \ref{ss.extreme}.  $\bar\calE_\infty$ may now be defined as the following point process 
\begin{equation}\label{e.barE}
 \bar\calE_{\infty} := \sum_{i, j} \delta_{p_i + \mathcal{D}^i_j}\,.
 \end{equation}


\ni
Furthermore, for any point process $\theta$ and for any real-valued random variable $S$, we define $\theta^S$ to be the point process $\theta$ shifted by $S$, i.e. if $\theta=\sum_{i\in I} \delta_{x_i}$,  
\begin{align*}\label{}
\theta^S:=\sum_{i \in I} \delta_{x_i+ S}\,.
\end{align*}

We are now ready to state our main result. 
\begin{theorem}\label{main-thm} 
Let $\theta$ be a point process so that $\theta\neq 0$ a.s. and $\theta(\R_+)<\infty$ a.s. (i.e. $\theta\in\M$ see~\eqref{e.M}) and 
let $\bar\calE_\infty$ be the point process defined in \eqref{e.barE}.
Then $\theta$  is a fixed point of BBM with critical drift (in the sense of Definition \ref{d.FP}) if and only if there exists a real-valued random variable $S$ independent of $\bar\calE_\infty$ such that
\begin{equation}\label{result} 
\theta \overset{d}= \bar\calE_\infty^S.
\end{equation}
\end{theorem}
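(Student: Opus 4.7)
The plan is to prove the two implications separately. The \emph{``if''} direction should follow quickly from the fact — recorded in (3.9) of~\cite{ABK13} — that $\calE_\infty$ itself is a fixed point of the critical-drift BBM dynamics. Since by construction $\calE_\infty \overset{d}{=} \bar\calE_\infty^{\,\frac{1}{\sqrt{2}}\log Z_\infty}$ with $Z_\infty$ independent of $\bar\calE_\infty$, and since the BBM dynamics commute with deterministic spatial translations, any independent shift $\bar\calE_\infty^S$ inherits invariance.

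For the converse, let $\theta=\sum_i\delta_{x_i}\in\M$ be a fixed point. I would characterize $\theta$ through its Laplace functional. For any nonnegative continuous test function $f$ with compact support, stationarity of $\theta$ together with the branching property of BBM yield, for every $s>0$,
\[
\E\bigl[e^{-\langle\theta,f\rangle}\bigr]
\;=\; \E\bigl[e^{-\langle\theta_s,f\rangle}\bigr]
\;=\; \E\!\Bigl[\prod_i \bigl(1-u_s(x_i,f)\bigr)\Bigr],
\]
where $u_s(x,f):=1-\E[e^{-\langle\eta_s^{x,-\sqrt{2}},f\rangle}]$ is the tail Laplace functional of a single BBM with critical drift started at $x$ and observed at time $s$. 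The essential analytic input is a sharp Bramson--Aïdékon-type asymptotic for $u_s(x,f)$ uniform in $x$ as $s\to\infty$, a refinement of the extremal convergence $\calE_t\to\calE_\infty$. This should imply
\[
\sum_i u_s(x_i,f) \;\longrightarrow\; C(f)\cdot T \qquad (s\to\infty),
\]
in distribution, for some $[0,\infty)$-valued random variable $T$ intrinsic to $\theta$ (morally a ``derivative martingale of $\theta$'') and where $C(f)$ is precisely the constant appearing in the Laplace functional of $\bar\calE_\infty$ — an explicit functional of $f$ and of the decoration law $\calD$. Since each $u_s(x_i, f)\to 0$, the standard approximation $\prod_i(1-u_s(x_i,f))\approx\exp(-\sum_i u_s(x_i,f))$ then yields
\[
\E\bigl[e^{-\langle\theta,f\rangle}\bigr] \;=\; \E\bigl[\exp(-C(f)\,T)\bigr],
\]
which coincides with the Laplace functional of $\bar\calE_\infty^S$ for a shift $S$ (an explicit function of $T$) independent of $\bar\calE_\infty$. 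Since $f$ is arbitrary, this identifies $\theta\overset{d}{=}\bar\calE_\infty^S$.

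The main obstacle, I expect, will be pinning down the precise Bramson-type asymptotic for $u_s(x,f)$ with enough uniformity to justify passing to the limit in the random sum $\sum_i u_s(x_i,f)$. Three regimes must be controlled simultaneously: particles $x_i$ above the logarithmic window $x\sim\frac{3}{2\sqrt{2}}\log s$ (few thanks to $\theta(\R_+)<\infty$, handled by a union bound on the traveling-wave right tail); particles inside the window (where the bulk of the contribution concentrates and the decoration structure emerges); and the many particles far below the window, each with negligible individual contribution but requiring sharp uniform upper bounds on $u_s(\cdot,f)$ to ensure summability. A separate and more structural difficulty is the ``coming down from $-\infty$'' issue: for $\theta\in\M$ the process $\theta_t$ need not a priori be locally finite, and one must justify — via a Liggett-style truncation and second-moment argument in the spirit of~\cite{liggett78} — that the fixed-point equation in Definition~\ref{d.FP} is meaningful. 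Finally, the non-degeneracy of $T$ and the extraction of an \emph{independent} shift $S$ from its law must be established; this is where the full DPPP structure of $\bar\calE_\infty$ (both the Poisson intensity $\sqrt{2}e^{-\sqrt{2}x}dx$ and the specific law of $\calD$) enters in an essential way.
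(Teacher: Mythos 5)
Your skeleton (fixed-point identity for the Laplace functional, $\prod_i(1-u_s(x_i,f))\approx e^{-\sum_i u_s(x_i,f)}$, and the target identity $\E[e^{-\langle f,\theta\rangle}]=\E[e^{-\Cb(f)T}]$ with $T$ not depending on $f$) is the same as the paper's, and the ``if'' part is indeed handled by citing \cite{ABK13,aidekon13}. The gap is in the mechanism you propose for the limit passage $\sum_i u_s(x_i,f)\to \Cb(f)\,T$. You want this to follow from ``sharp Bramson--A\"id\'ekon-type asymptotics for $u_s(x,f)$ uniform in $x$'' together with ``sharp uniform upper bounds on $u_s(\cdot,f)$ to ensure summability'' for the particles far below the window. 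This cannot work as stated: the theorem makes \emph{no} assumption on the growth of $\theta$ near $-\infty$, and the uniform asymptotics (Lemmas \ref{new-lemma} and \ref{decoration-lemma}) are only valid for $x\in[-\sqrt t/\delta,-\delta\sqrt t]$ and are not expected to hold outside that window. For a general $\theta\in\M$ the quantity $\int_{(-\infty,-\sqrt t/\delta]}u_t(x,f)\,\theta(dx)$ can be arbitrarily large no matter how sharp your deterministic bound on $u_t$ is --- this is precisely the ``coming down from $-\infty$'' scenario --- so no purely analytic estimate can rule out a non-negligible contribution from the left region, nor give tightness of the window term. Your ``Liggett-style truncation and second-moment argument'' only addresses well-definedness of the fixed-point equation (the paper's Lemma \ref{l.Lthetat}), not this control; and the more Liggett-like route of characterizing a limiting random measure is exactly what the paper explains it must avoid (Section \ref{ss.attempt}, Question \ref{q.Psi}).

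What the paper does instead --- and what is missing from your proposal --- is to use the fixed-point property itself, at auxiliary times, combined with a Bernoulli concentration inequality (Lemma \ref{concentration}), to kill the outside contributions by contradiction (Lemma \ref{l.key}). If the right part $R^+(t,\delta)=\int_{[-\delta\sqrt t,A]}\P(x+M_t\ge\sqrt2 t-K_f)\theta(dx)$ did not vanish, then at the \emph{earlier} time $s=\delta^2t$ the conditional expectation of the number of descendants above level $-K_f$ would be of order $\delta^{-3}$ (Lemma \ref{ratio-R}), and concentration turns this into an actual blow-up of $\theta_s([-K_f,\infty))\overset{d}{=}\theta([-K_f,\infty))$, contradicting $\theta\in\M$. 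If the left part $L^+(t,\delta)$ did not vanish, the same contradiction is produced at the \emph{later} time $s=2t$, where the comparison of $\P(x+M_t\ge\sqrt2t-K_f)$ with $\P(x+M_{2t}\ge 2\sqrt2t-K_f)$ is carried out via Bramson's $\psi$-function (Proposition \ref{bramson}, Lemma \ref{bd-L}), not via a pointwise tail bound. Finally, the tightness of the window term $Z_t^\theta$ (your $T$) is obtained by the very same self-consistency argument, comparing it with $\theta_t(\R_+)\overset{d}{=}\theta(\R_+)<\infty$. Without this use of invariance at two different time scales your limit passage, and hence the identification $\theta\overset{d}{=}\bar\calE_\infty^S$, is not justified.
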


\begin{remark}\label{}
Note that besides the assumption $\theta(\R_+)<\infty$ a.s, we do not make any hypothesis on the integrability properties of the possible fixed points (in particular we make no assumption on the growth of the number of particles in $\theta$ at $-\infty$).
\end{remark}

\begin{remark}
Our proof will show that the random shift $S$ comes from the following convergence in law which holds for any fixed point $\theta$: 
\[
\int_{-\infty}^0 \P(M_t \ge \sqrt{2}t-x)\theta(dx) \xrightarrow{d} e^{\sqrt{2}S}.
\]
\end{remark}
 
\begin{remark}\label{}
The if part in the theorem, i.e. the {\em existence} part has been proved in \cite{ABK13,aidekon13} at least in the case of the limiting process $\calE_\infty$. The proof in Section 3.2. of \cite{ABK13} is very short and combines the convergence in law $\calE_t \to \calE_\infty$ with the observation that the $\log$ correction term flattens as $t\to \infty$. Thanks to this flattening, the effective drift felt by the particles becomes asymptotically linear, i.e. $-\sqrt{2} t$. We claim that by a slight modification of the arguments in \cite{ABK13}, it is not hard to obtain the invariance of the process $\bar \calE_\infty$ as well (and thus of all our fixed points $\bar\calE_\infty^S$).   
\end{remark}

\begin{remark}\label{}
Note that the empty (or zero) point process is also a natural fixed point of the BBM particle system with critical drift. For example all finite point processes as well as all infinite point processes whose intensity does not blow up sufficiently fast on $\R_{-}$ are in the basin of attraction of $0$. It is an instructive exercice to check that the following examples of point processes asymptotically converge to 0. (In the space $\calN$ equipped with the vague topology, see Section \ref{ss.state}). 
\bnum
\item The deterministic point process $\theta_0=\sum_{i\in \N} \delta_{-i}$. 
\item The point process $\theta_0\sim \PPP(1_{(-\infty,0]} dx)$. 
\item The point process $\theta_0 \sim \PPP(e^{-\lambda x} dx)$ for any $0<\lambda \leq \lambda_c=\sqrt{2}$. The critical case is true but less easy.
\enum
\end{remark}
 
\begin{remark}
The question of the {\em basins of attractions} is interesting in its own. We shall only initiate the study of this question in Section \ref{s.basin} by designing a large space $\M_{3/2}\subset \M \subset \calN$ which is shown to contain the fixed points from Theorem \ref{main-thm} and which is built in such a way that it prevents any coming down from $-\infty$. See Section \ref{s.basin}. 
\end{remark}


\begin{remark}
If $\theta$ is a fixed point, it is immediate that a superposition of $n$ i.i.d. copies of $\theta$ is also a fixed point. This is linked to the fact that $\calE_\infty$ satisfies the invariance property under superpositions (see Corollary 3.3 of \cite{ABK13}). Moreover, Maillard \cite{maillard-stability} showed the equivalence between the invariance property under certain superpositions and the structure of decorated Poisson point process with exponential intensity.
\end{remark}

\subsection{Links to other works.}\label{ss.lit}
In this section we briefly make a few links with other related works in the literature.
\bnum
\item  In the work \cite{liggett78}, Liggett focuses on the case of {\em independent particle systems} where particles evolve independently of each other according to Brownian motions with negative drift $-\lambda t$. (His work applies to more general Markov processes than Brownian motion). We are thus considering the same particle system as Liggett except particles in our case are also subject to branching.  Liggett's work has been very influential in the recent years especially since the work of Biskup and Louidor \cite{BL16} where the fact that local extrema of a Discrete Gaussian Free field are asymptotically distributed as a shifted Poisson Point Process with intensity $e^{-\lambda x} dx$ is extracted from Liggett's theorem \cite{liggett78} thanks to a beautiful ``Dysonization'' procedure. See also \cite[Chapter 9]{biskupBook} for a very nice account on the characterization by Dysonization as well as \cite{zeitouni2017} where such a Dysonization procedure is also used.

\item The characterization theorem of Liggett has been further extended in the works by Ruzmaikina-Aizenman and Arguin-Aizenman \cite{AR05,AA09} where, motivated by links with spin glasses, they characterize the fixed-points of independent particle systems viewed modulo global translations. The analogous extension in our present setting, i.e. the fixed points for BBM with critical drift and modulo translations reveals some interesting re-shuffling properties of the fixed points in Theorem \ref{main-thm}. See our discussion in Section \ref{ss.aizenman}. 

\item Kabluchko analyzed in \cite{kabluchko} the fixed points of BBM under super-critical drift $\lambda>\sqrt{2}$ and under an assumption of locally finite intensity measure. See the discussion in Section \ref{non-critical-drift}. 

\item As mentioned above, Maillard characterized in \cite{maillard-stability} the point processes which are invariant under superposition (or more precisely point processes which are {\em exp-1-stable}). The difference with our present work is two-fold:  in our present setting we do not know a priori that all the fixed points in Theorem \ref{main-thm} have to  be {\em exp-stable}.
And the second difference is that the characterization in \cite{maillard-stability} does identify the Poisson Point Process with exponential intensity for the {\em leaders} but does not not characterize what is the decoration law as  {\em exp-1-stable} forms a large family of point processes.

\item 
We expect this analysis of fixed points to hold also for {\em branching random walks} (BRW) which are analogs of BBM in the discrete time (with a greater variety of displacement laws), see the book \cite{shi-book} for an introduction to Branching Random Walks and \cite{BK05,Aid13, Mad16} for relevant works. We discuss this question in Section \ref{ss.BRW}  where we highlight the fact that the question of fixed points also applies  to the so-called {\em lattice-BRWs} as opposed to the classical convergence results which may fail for lattice-BRWs (see \cite{Aid13,shi-book}).

\item The work \cite{BCM18} by Bertoin, Cortines and Mallein characterizes (under mild conditions) another natural family of branching processes called the  {\em branching stable} point processes. These are point processes $\mathbf{S}_1$ which satisfy the identity in law $\mathbf{S}_n \overset{d}= a(n) \mathbf{S}_1$ for some (deterministic) sequence $a(n)$ and where  $(\mathbf{S}_n)_{n\geq 1}$ denotes a branching random walk with reproduction law $\mathbf{S}_1$, starting from $\mathbf{S}_0:=\delta_0$.


\item Finally, such a characterization of fixed points may also be of interest for other natural point processes on $\R$. For example in the context of random matrices and determinantal processes, Najnudel and Virag introduced recently in \cite{najnudel} a family of infinite dimensional Markov chains (with an explicit transition mechanism) which are aimed at preserving the celebrated $\mathrm{Sine}_\beta$ point processes for any $\beta>0$. It is then a natural question to ask whether the $\mathrm{Sine}_\beta$ processes are the only invariant measures  for these Markov chains. 
\enum

\subsection{An Attempt of Proof of uniqueness.}\label{ss.attempt}

As we mentioned above, although our question is more difficult because of the branching setting, we have been inspired by the ideas of \cite{liggett78}. To make a comparison and to emphasize on the new ideas of this current paper, let us first recall a brief summary of \cite{liggett78}.

\subsubsection{Summary of Liggett's proof.}  (See also our companion paper \cite{liggett-new}). 
In the model of \cite{liggett78}, each points of a point process $\eta$ move independently of each other as a Markov chain on some state space $\mathcal{S}$. (Without great loss of generality, one may think of the case $\mathcal{S}=\R$ here).  Let $P(x,dy)$ denote the transition probabilities of this Markov chain. Let $\eta_n$ denote the point process at time $n$. To characterize fixed points $\eta$ such that $\eta_n \overset{d}{=}\eta$, it suffices to check that for all non-negative compactly supported functions $f\in C_c^+(\mathcal{S})$, 
\begin{equation}\label{eqn-for-eta}
\E[e^{-\langle f,\eta_n \rangle}] = \E[e^{-\langle f,\eta \rangle}],    
\end{equation}
where
\[\langle f,\eta \rangle := \int_{\mathcal{S}} f(x)\eta(dx).\]
Basic computations bring us to 
\begin{equation}\label{exact-comp}
\E[e^{-\langle f,\eta_n \rangle}] = \mathbb{E}\biggl[ \exp\biggl\{ \int_{\mathcal{S}} \log\bigl(\langle e^{-f(\cdot)},P^n(x,\cdot) \rangle\bigr) \eta(dx) \biggr\}\biggr].
\end{equation}
Under a uniform transience assumption that for all compact set $C\subset\mathcal{S}$,
\[\lim_{n\to \infty} \sup_{x} P^n(x,C) =0,\] 
one sees that as $n\to\infty$,
\begin{equation}\label{replace-log}
    \log\bigl(\langle e^{-f(\cdot)},P^n(x,\cdot) \rangle\bigr) = (1+ o_n(1))\langle e^{-f(\cdot)}-1,P^n(x,\cdot) \rangle.
\end{equation}
It then follows from Fubini theorem that
\begin{align}\label{factorisation}
    \E[e^{-\langle f,\eta \rangle}] = &\mathbb{E}\biggl[ \exp\biggl\{ (1+ o_n(1))\int_{\mathcal{S}}(e^{-f(y)} - 1)\mathbf{M}_n(dy) \biggr\}\biggr]\nonumber\\
    =&\E\left[\exp\left\{-(1+o_n(1)) \<{h, \mathbf{M}_n}\right\}\right]
\end{align}
where $\mathbf{M}_n(dy) := \langle P^n(\cdot,dy), \eta(\cdot) \rangle$ and $h(y)=1-e^{-f(y)}$. Let 
\[
C_c^b(\mathcal{S}):=\{h=1-e^{-f}| f\in C_c^+(\mathcal{S})\}.
\]
As a result, $\lim_{n\to \infty} \E[e^{-\<{h,\mathbf{M}_n}}]$ exists for any $h\in C_c^b(\mathcal{S})$. Note that $C_c^b(\mathcal{S})$ contains all compactly supported continuous function $h$ such that $0\leq h(y)<1$. Therefore this space is large enough to conclude that $\mathbf{M}_n(dy)$ converges in law to a random measure $\mathbf{M}_\infty(dy)$. So, we have
\begin{equation}
    \E[e^{-\langle f,\eta \rangle}] = \mathbb{E}\biggl[ \exp\biggl\{ \int_{\mathcal{S}}(e^{-f(y)} - 1)\mathbf{M}_\infty(dy) \biggr\}\biggr],
\end{equation}
which in turn means that $\eta$ must be a mixed Poisson point process with the random intensity measure $\mathbf{M}_\infty(dy)$. Using the constraint \eqref{eqn-for-eta} once more with $n=1$, it follows that $\mathbf{M}_\infty P\overset{d}{=}\mathbf{M}_\infty$. Under some additional assumptions on the underlying Markov chain, this implies that $\mathbf{M}_\infty P =\mathbf{M}_\infty$ a.s., which is the celebrated convolution equation of Choquet-Deny. It can be solved explicitly in many situations, see \cite{Deny} and \cite{ChoquetDeny}. As alluded to above, we remark that this approach has also been a key ingredient in the work of Louidor and Biskup \cite{BL16} on the convergence of extreme values of discrete Gaussian free field (DGFF). 

\subsubsection{Liggett's proof and Choquet-Deny equation applied to the BBM.}
To obtain the characterisation of BBM fixed points, a natural first attempt is to implement Liggett's strategy to the case of BBM.  Recall that $\theta_t=\sum_{i\in I}\sum_{1\leq k\leq n^i(t)}\delta_{x_i+\chi_k^i(t)-\sqrt{2}t}$. Observe that for any continuous function supported in a compact set $f\in C_c^+(\R)$, one has
\begin{equation}\label{Laplace-time-t}
\E[e^{-\langle f,\theta_t \rangle}] = \E\biggl[\exp\biggl(\int_{\R}\log\E[e^{-\sum_{k=1}^{ n(t)}f(x+ \chi_k(t)-\sqrt{2}t)}]\theta(dx)\biggr)\biggr].
\end{equation}
Next, by introducing $\mathcal{D}_t := \sum_{k\leq n(t)} \delta_{\chi_k(t)-M_t}$, one gets
\begin{align*}
\log\E[e^{-\sum_{k=1}^{ n(t)}f(x+ \chi_k(t)-\sqrt{2}t)}] = &\log\E[e^{-\<{f(x+ M_t-\sqrt{2}t+ \cdot), \mathcal{D}_t}}]\\
=&-(1+o_t(1))(1-\E[e^{-\<{f(x+ M_t-\sqrt{2}t+ \cdot), \mathcal{D}_t}}])
\end{align*}
where the second equality holds for fixed $x\in\R$ since $\theta\in\M$ (i.e. with a finite mass on $\R_+$ a.s.). Note that as opposed to Liggett's case, the term $o_t(1)$ is no longer uniform in $x$ here. Besides this first complication (which will not be a major one), one may expect that when one conditions $M_t-\sqrt{2}t$ to be large (so that $f(x+M_t-\sqrt{2} t)$ does not vanish), then $M_t-\sqrt{2}t$ and $\mathcal{D}_t$ will asymptotically be independent and that $\calD_t$ should converge in law as $t\to \infty$ to the limiting decoration process $\calD$ we have seen earlier in the definition of $\bar \calE_\infty$ in~\eqref{e.barE}. 
If so, this would lead us to 
\[
\E[e^{-\<{f(x+ M_t-\sqrt{2}t+ \cdot), \mathcal{D}_t}}] \approx \int_{\R} \E[e^{-\<{f(y+ \cdot), \mathcal{D}}}]P^t(x,dy),
\] 
where $P^t(x,dy)$ denotes the (non-Markovian) transition probabilities of $M_t-\sqrt{2}t$. Similarly as above, we set $H(y):= 1- \E[e^{-\<{f(y+ \cdot), \mathcal{D}}}]$ and obtain that
\[
\E[e^{-\langle f,\theta \rangle}] \approx\E\left[\exp\left(-(1+o_t(1))\int_{\R}H(y) \underbrace{\int_{\R} P^t(x,dy)\theta(dx)}_{\mathbf{M}_t(dy)}\right)\right]
\]
Nevertheless, because of the presence of the limiting decoration process $\mathcal{D}$, it seems far from obvious to check that the class of such functions $H$ is sufficiently large to characterize the convergence in law of $\mathbf{M}_t(dy)$ to a limiting random measure $\mathbf{M}_\infty(dy)$. Let us be more specific and call $\Psi$ this class of functions, i.e.
\begin{align}\label{e.classPsi}
\Psi:= \{ H_f \text{ s.t.} f\in C_c^+(\R) \text{ and }\forall y\in \R, H_f(y):= 1- \E[e^{-\<{f(y+ \cdot), \mathcal{D}}}] \}\,.
\end{align}
We may thus summarize the three difficulties we would need to face if one would want to follow Liggett's strategy as follows:
\bnum
\item First, the error term in the $\exp(\int_{\R} \ldots \theta(dx))$ is a $o_{t,x}(1)$ rather than $o_t(1)$ and this error term degenerates for some $x$ at sufficiently large distance depending on $t$. 
\item Second, we need an asymptotic factorization of $M_t -\sqrt{2t}$ and $\calD_t$ plus the convergence of the later towards $\calD$. This will indeed be correct but only for initial atoms $x\in \theta$ in a window $[-\sqrt{t}/\delta,-\delta \sqrt{t}]$ and is not expected to be correct elsewhere.
\item Finally, probably the main technical issue is as follows: assuming issues (1) and (2) have been successfully addressed, then Liggett's strategy would bring us to the following identity on (possible subsequential scaling limits) $\mathbf{M}_\infty(dy)$ of the measures $\mathbf{M}_t(dy)$:
for any $H=H_f\in \Psi$, 
\begin{align*}\label{}
\Eb{\exp(- \<{H_f, \mathbf{M}_\infty}} = \Eb{e^{-\<{f,\theta}}}\,.
\end{align*}
Yet, as mentioned above, it would remain to show that the class of functions $\Psi=\{H_f, f\in C_c^+(\R)\}$ is large enough to characterize the limiting measure(s) $\mathbf{M}_\infty(dy)$. 
Note for example that the class $\Psi$ does not satisfy usual Stone-Weierstrass' type of hypothesis. 
\enum
Let us write this main difficulty as an open question.
\begin{question}\label{q.Psi}
Is the class of functions $\Psi$ defined in \eqref{e.classPsi} sufficiently large to characterize the law of a random positive Radon measure on $\R$ ?
\end{question}
To bypass this difficulty, we designed a new strategy. In fact, this new strategy can also be used to give a different proof of Liggett's theorem \cite{liggett78}. We present it in our companion paper \cite{liggett-new}.

\begin{figure}[!htp]
\begin{center}
\includegraphics[height=9cm]{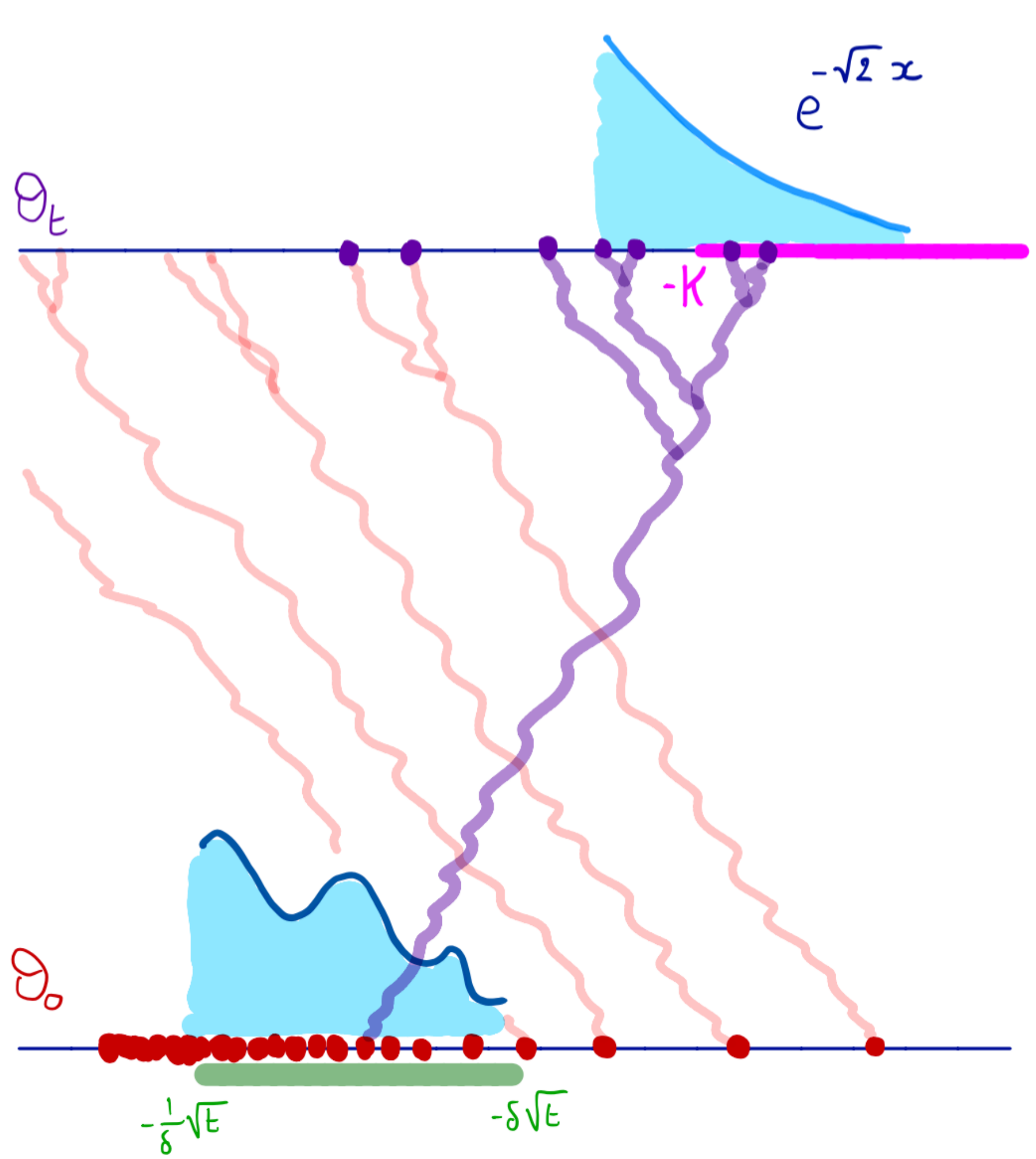}
\includegraphics[height=9cm]{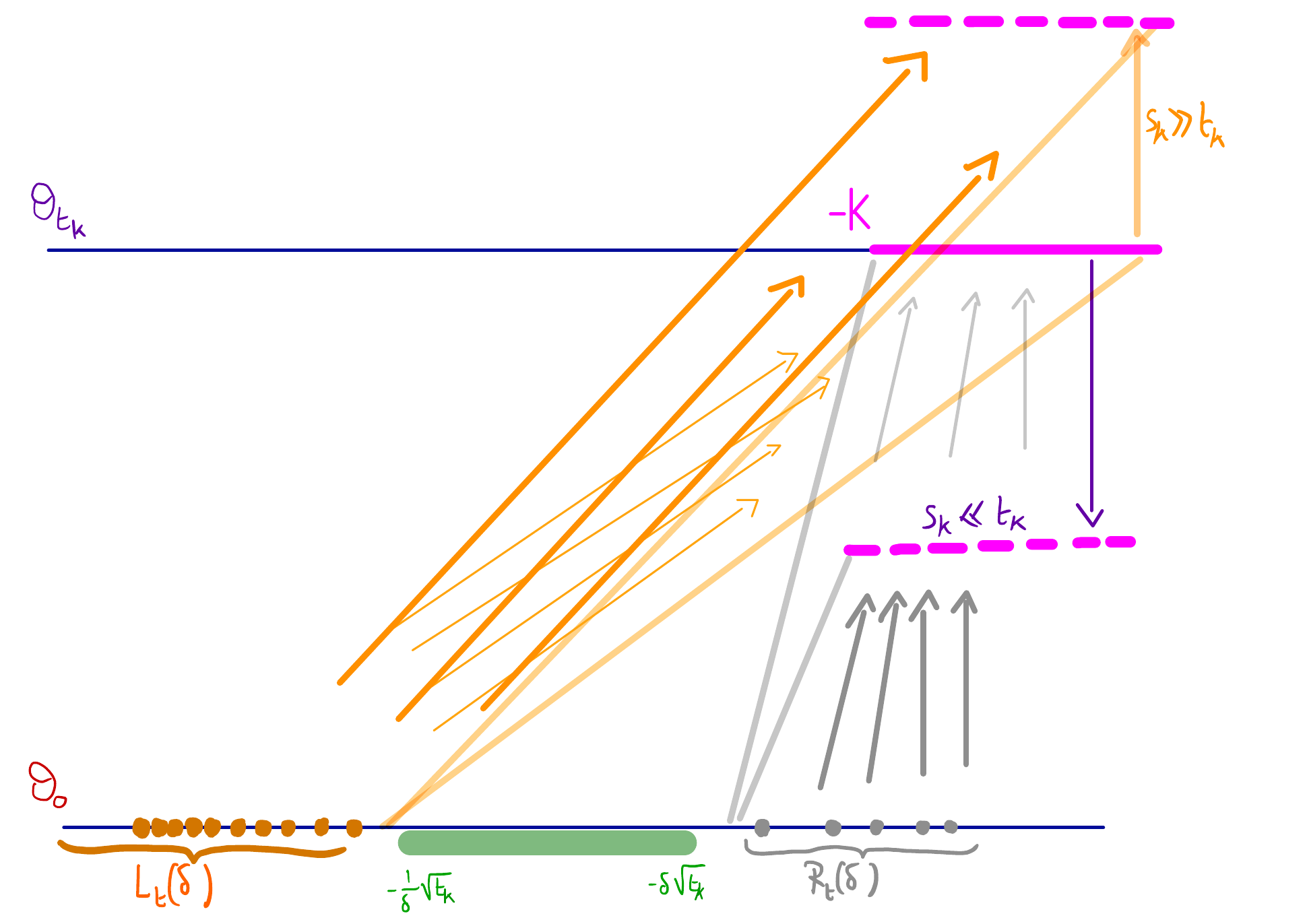}
\end{center}
\caption{An artistic view of the proof.}\label{f.sketch}
\end{figure}

\subsection{Heuristic ideas and sketch of proof.}\label{proof-sketch} 
We now outline the main ideas behind our proof. As in \cite{liggett78}, the equality in law $\theta_t\overset{d}= \theta_0$ for any $t>0$ will give us many equations which need to be satisfied by the fixed point $\theta$. We are going to focus in particular on the asymptotic behaviour of  the Laplace transforms \eqref{Laplace-time-t} along some well-chosen subsequence of times $\{t_k\}$ going to infinity. The limit of \eqref{Laplace-time-t} is obviously $\E[e^{-\<{f,\theta}}]$ as the point process $\theta_0=\theta$ is assumed to be a fixed point. Note that in \eqref{Laplace-time-t}, $f$ is supported in a compact set of $\R$. So, for some fixed $K\in \R_+$,
\begin{align*}
&1-\E[e^{-\sum_{k=1}^{ n(t)}f(x+ \chi_k(t)-\sqrt{2}t)}]\\
&=\E\left[(1- e^{-\sum_{k=1}^{ n(t)}f(x+ \chi_k(t)-\sqrt{2}t)})\ind{M_t-\sqrt{2}t+x\geq-K}\right]\\
&=\E\left[1- e^{-\<{f(x+M_t-\sqrt{2}t+\cdot), \calD_t}}\Big \vert M_t-\sqrt{2}t+x \geq -K\right]\P(M_t-\sqrt{2}t+x\ge -K).
\end{align*}
As $\theta(\R_+)<\infty$ a.s., for any atom $x$ of $\theta$ and any large $t>0$, we may approximate $\log \E[e^{-\sum_{k=1}^{ n(t)}f(x+ \chi_k(t)-\sqrt{2}t)}]$ by $-(1-\E[e^{-\sum_{k=1}^{ n(t)}f(x+ \chi_k(t)-\sqrt{2}t)}])$. (N.B. To control this approximation step uniformly over starting points $x$, we will need in the actual proof to introduce a large cut-off $A$ so that one focuses only on initial points $x\in(-\infty,A]$). Consequently, the convergence of \eqref{Laplace-time-t} along a subsequence will follow from the tightness of the following random variable as $t\to \infty$. 
\[
\int \E\left[(1- e^{-\<{f(x+M_t-\sqrt{2}t+\cdot), \calD_t}})\Big \vert M_t-\sqrt{2}t+x \geq -K\right]\P(M_t-\sqrt{2}t+x\ge -K)\theta(dx)
\]
Call the integrant function in the above integral $G_t(x)$.
At this point, in the above  integral $\int_{\R} G_t(x) \theta(dx)$, we do not know a priori which parts in space will contribute most to this integral (as we do not yet know what is the structure underlying the point process $\theta$). Now comes the main observation in the proof:  we make in some sense an {\em educated guess.} Namely, if all fixed points happen to have the expected structure given by the decorated point process $\bar\calE_\infty$ (plus drift), then the precise quantitative results from \cite{ABK13,aidekon13} tell us that the above integral should be very well approximated by points $x$ coming from the window $[-\tfrac 1 \delta \sqrt{t}, -\delta\sqrt{t}]$, where $\delta$ is chosen small enough (the smaller $\delta$ is, the better the approximation will be). In other words,  assuming the theorem indeed holds, we expect to have
\begin{align*}\label{}
\int  G_t(x) \theta(dx)
&\approx 
\int_{[-\tfrac 1 \delta \sqrt{t}, -\delta \sqrt{t}]} G_t(x) \theta(dx)
\end{align*}
The great news with this is that these initial points $x$ are precisely the points for which we have a convergence and a decoupling result under the appropriate conditioning as $t \to \infty$ 
of $(M_t-\sqrt{2} t, \calD_t)$ to an exponential variable times the limiting decoration process. This asymptotic decoupling from \cite{ABK13} will be stated in Lemma \ref{new-lemma}. Recall 
\[
G_t(x):=  \E\left[1- e^{-\<{f(x+M_t-\sqrt{2}t+\cdot), \calD_t}}\Big \vert M_t-\sqrt{2}t+x \geq -K\right]\P(M_t-\sqrt{2}t+x\ge -K)
\]
As such, Lemma \ref{new-lemma} will give us that for points $x$ in this window $[-\tfrac 1 \delta \sqrt{t}, -\delta\sqrt{t}]$, one has as $t\to \infty$
\begin{align*}\label{}
G_t(x)\approx  \P(M_t-\sqrt{2}t+x\ge -K)
\Big(\int_{-K}^\infty \sqrt{2}\E\left[(1-e^{-\int f(y+z)\mathcal{D}(dz)})\right] e^{-\sqrt{2}y-\sqrt{2}K}dy\Big)
\end{align*}
The advantage of this expression is two-fold: first we see the limiting expected  structure appearing, and second (up to a small error) the dependence in the point $x$ in $G_t(x)$ is now reduced to the probability $\P(M_t-\sqrt{2}t+x\ge -K)$.
So far notice that we have not yet used the fact that $\theta$ is a fixed point. Here comes its first key use: by using that $\theta_t\overset{d}=\theta$, one argues that the random variables 
\[
\int_{[-\tfrac 1 \delta \sqrt{t}, -\delta \sqrt{t}]} P(M_t-\sqrt{2}t+x\ge -K) \theta(dx) 
\]
need to be tight as $t\to \infty$ (otherwise the point process $\theta_t$ would need to blow up in, say the window $[-K,K]$).
This first use of our assumption leads us to the fact that (up to some work on the dependance on the width $K$ of the support of $f$) 
\begin{align*}\label{}
\int_{[-\tfrac 1 \delta \sqrt{t}, -\delta \sqrt{t}]} G_t(x) \theta(dx) 
\end{align*}
converges under subsequences $t_k \to \infty$ to the desired structure.

\medskip
At this point, we are still left with the main step of the proof which consists in showing that we were indeed allowed to make the above {\em educated guess}. Namely, it remains to prove that when $\delta$ is small, points $x$ outside of the window $[-\tfrac 1 \delta \sqrt{t}, -\delta \sqrt{t}]$ cannot contribute significantly to $\int G_t(x) \theta(dx)$. We will for this analyze the following Left and Right terms:
\begin{align*}\label{}
\begin{cases}
L_t(\delta):= \int_{(-\infty,-\tfrac 1 \delta \sqrt{t}]} G_t(x) \theta(dx) \\
R_t(\delta):= \int_{[\delta \sqrt{t},A]} G_t(x) \theta(dx)\,,
\end{cases}
\end{align*}
where $A$ is the cut-off mentioned above. (Also our definitions of Left and Right terms will slightly differ in the actual proof). We will show that these two random variables converge in probability to zero as $t\to \infty$ and then $\delta\to 0$ and we will proceed in both cases by contradiction. In a few words, we will argue as follows
\bi
\item[i)] For the right term $R_t(\delta)$, assume by contradiction that one can find a sequence $t_k\to \infty$ such that $R_{t_k}(\delta)$ is bounded away from zero with positive probability. Then we will show that this leads to a contradiction by  looking at earlier times $s_k:= \delta^2 \, t_k \ll t_k$ for which we will detect an explosion for the number of points in $[-K,K]$ for the point processes $\theta_{s_k}$ (at least as $\delta \to 0$). The key quantitative estimate will be the uniform control over $x\in [-\delta\sqrt{t}, A]$ provided by Lemma \ref{ratio-R}. 
\item[ii)] For the left term $L_t(\delta)$, we also proceed by contradiction. For a similarly defined sequence $\{t_k\}$, we now obtain the contradiction by looking at the later times $s_k:= 2 t_k$. We will show also that the point processes $\theta_{s_k}$ will accumulate two many points in $[-K,K]$ if $L_t(\delta)$ does not converge to 0 in probability. The proof here will be much more delicate as we do not have such a uniform control as in Lemma \ref{ratio-R} for the right term. Instead we rely on precise estimates given by Bramson's $\psi$-function (which will be recalled in Section \ref{ss.bramson2}) This will be the purpose of Lemma \ref{bd-L}. 
\ei 
This ends our sketch of proof. See also Figure \ref{f.sketch} which serves as an illustration of the strategy implemented. 

\begin{remark}
As remarked earlier, the idea explained above can be adapted to give a new proof of the above mentioned Liggett's result. We implement this idea in a companion paper \cite{liggett-new}. The novelty in our approach is that it avoids using Choquet-Deny convolution equation (\cite{ChoquetDeny}, \cite{Deny}) and this seems to be necessary if one wants to avoid answering the seemingly tedious Question \ref{q.Psi}. 
\end{remark}

\subsection*{Organization of the paper.} The rest of this paper is organised as follows. In Section \ref{facts-BBM}, we recall some facts and results on BBM.
Uniqueness is proved in Section \ref{Uniqueness}. Section \ref{s.basin} introduces a space, $\M_{3/2}$, which is left invariant by BBM and should as such be relevant for the study of basins of attractions. In the final Section \ref{s.conclusion}, we discuss Kabluchko's results, the question of fixed points for BRWs (including the lattice-case) and the fixed points modulo translations in the spirit of \cite{AR05,AA09}.

\subsection*{Acknowledgements.}
We wish to thank Elie A\"id\'ekon for pointing to us the reference \cite{kabluchko} for the non-critical case. 
 The research of X. C. is supported by ANR/FNS MALIN.
The research of C.G. and A.S.  is supported by the ERC grant LiKo 676999.

\section{Preliminaries}\label{facts-BBM}
We first define what is the setup/state space and then we recall some facts on BBM as well as the related FKPP equation, mainly extracted from \cite{bramson78,bramson83,ABK13,aidekon13,bovier-book} and which will be used to prove the main theorem.

\subsection{State space.}\label{ss.state}
Let $\calN$ be the space of integer valued measures on $\R$ which are locally finite. 
As in the introduction, we represent any (deterministic) point $\eta\in \calN$ as follows  
\[
\eta = \sum_{i\in I}\delta_{x_i}
\]
with $I$ a finite or countable index set and where the atoms $x_i$ need not to be distinct. In the rest of this text, we will use the following notation convention: $\eta\in \calN$ will denote a (deterministic) point in $\calN$ while $\theta$ will in general denote a {\em point process}, i.e. a random variable in $\calN$. 
This space $\calN$ is naturally equipped with the vague topology, see \cite{kallenberg-book}.
\begin{remark}\label{}
Note that the weak topology is not appropriate for the processes we consider. This is due to the following reason: recall $\theta_n \overset{w}\longrightarrow \theta$ if and only if for any continuous bounded $f\in C_b(\R)$,
$\theta_n(f) \to \theta(f)$. But the processes we are interested in have a diverging mass near $-\infty$, as such they will not integrate, say the continuous function $f\equiv 1$. The vague topology is more indulgent and corresponds instead to $\theta_n\overset{v}\longrightarrow \theta$ if and only if for any $f\in C_c(\R)$, $\theta_n(f) \to \theta(f)$.  
\end{remark}
The vague topology on $\calN$ is metrizable and one can define a metric $d=d_{\calN}$ on $\calN$ such that the space $(\calN,d)$ is Polish (see Theorem A2.3 in \cite{kallenberg2006}). As such one may now consider probability measures on $\calN$ in the usual way. 

We also introduce the following key subspace:
\begin{align}\label{e.M}
\M:= \{ \eta\in \calN, \eta([0,\infty)) <\infty \} \subset \calN\,.
\end{align}
This means that point processes which are a.s. in $\M$ have a.s. a top particle and may then also be viewed as  non-increasing sequences $x_1\geq \ldots \geq x_n \geq \ldots$. Note though that the space $\M$ is not closed in $(\calN,d)$. Due to this,  our state space will still be the Polish space $\calN$ but the probability measures $\pi$ we consider in Theorem \ref{main-thm} are probability measures on $\calN$ such that if $\theta\sim \pi$ then $\theta\in \M$ a.s.

\subsection{Laplace functional and notion of invariant measure.}

It is well known that if $\theta$ is point process in $\calN$, then its law is characterised by the Laplace functional $\Psi_\theta$ defined on the set of non-negative Borel measurable functions as follows:
\[
\Psi_\theta(f):=\E[e^{-\<{f,\theta}}], \ \forall f : \R\to\R_+ \textrm{ Borel measurable}.
\]
Usually, we take $C_c^+(\R)$, the class of non-negative, continuous and compactly supported functions, which are sufficient to determine the law of point process. See \cite{kallenberg-book}. However, in this paper we are interested in point processes which live a.s. in the space $\M$ defined in~\eqref{e.M}, i.e.  which are such that $\theta(\R_+)<\infty$ a.s. Therefore, as explained for example in \cite{bovier-book} Chapter 7.2, we may also consider the class of functions $f$ of the form 
\begin{equation}\label{f-type} 
f(x)= \sum_{k=1}^n c_k 1_{x >b_k}, \textrm{ with } c_k>0, b_k\in\R.
\end{equation}
It is not hard to check  that the class of functions of this form is sufficiently rich to characterise the law of a point process in $\M$ 
 (see e.g. \cite{kallenberg-book} for more details). This class of functions will be important for us in order to be able to rely on Bramson's $\psi$-function introduced  in Section \ref{ss.bramson2}.
\begin{remark}
Note that this class of functions is only suitable to determine processes in $\M$. Otherwise, if we take two Poisson point processes with different intensities $e^xdx$ and $e^{2x}dx$, we have $\Phi_1(f)=\Phi_2(f)=0$ for any function $f$ in this class.
\end{remark}

Let us now state the following Lemma which will allow us to give a rigorous meaning to the notion of fixed point/invariant measure. 
\begin{lemma}\label{l.Lthetat}
For any probability measure $\pi$ on $\calN$ and for any function $f$ either in $C_c^+(\R)$ or of the form~\eqref{f-type}, the Laplace transform $\Eb{e^{-\<{f,\theta_t}}}$ is well defined even if a coming down from $-\infty$ happened from the point process  $\theta_0=\theta$ to $\theta_t$. 
\end{lemma}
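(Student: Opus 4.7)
The plan is to observe that the Laplace transform is well-defined essentially by construction, once one unfolds the definition of $\theta_t$ carefully and allows $\theta_t$ to be an at most countable sum of Dirac masses which need not be locally finite.

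First I would construct $\theta_t$ pathwise. Fix a probability space carrying $\theta_0\sim\pi$ and, conditionally on $\theta_0=\sum_{i\in I}\delta_{x_i}$ (with $I$ at most countable since $\theta_0\in\calN$), an independent family $\{(\chi^i_k(s),n^i(s))_{s\geq 0}\}_{i\in I}$ of i.i.d.\ BBMs with critical drift. Then set
\[
\theta_t \;:=\; \sum_{i\in I}\sum_{k=1}^{n^i(t)} \delta_{x_i+\chi^i_k(t)-\sqrt{2}t}.
\]
Since each $n^i(t)$ is a.s.\ finite, this is an at most countable sum of Dirac masses. It may however fail to be locally finite --- the coming down from $-\infty$ phenomenon --- so $\theta_t$ need not belong to $\calN$ strictly speaking. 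Nonetheless it is well-defined as a random $[0,\infty]$-valued Borel measure on $\R$, with the natural measurability inherited from the joint measurability of the BBM particle positions.

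Next, for any non-negative Borel measurable $f:\R\to\R_+$, I define
\[
\<{f,\theta_t} \;:=\; \sum_{i\in I}\sum_{k=1}^{n^i(t)} f\!\left(x_i+\chi^i_k(t)-\sqrt{2}t\right) \;\in\; [0,\infty].
\]
As a countable sum of non-negative measurable random variables, this is a $[0,\infty]$-valued measurable random variable. With the convention $e^{-\infty}=0$, the quantity $e^{-\<{f,\theta_t}}$ then takes values in $[0,1]$ and is measurable, so its expectation is automatically well-defined in $[0,1]$. Both $C_c^+(\R)$ and the class~\eqref{f-type} consist of non-negative Borel measurable functions, so the preceding applies to either case.

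The lemma does not harbor a substantial obstacle; the only point one has to be comfortable with is that $\theta_t$ may fail to live in $\calN$, and yet $\<{f,\theta_t}$ and hence the Laplace transform remain meaningful because they are sums of non-negative terms which we simply allow to take the value $+\infty$. The role of the lemma is thus essentially one of definitional hygiene, providing a precise meaning to the invariance identity $\theta_t\overset{d}=\theta_0$ appearing in Definition~\ref{d.FP}, without having to first establish a priori that no coming down from $-\infty$ occurs.
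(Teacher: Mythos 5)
Your proof is correct and follows essentially the same route as the paper: both rest on the observation that $e^{-\langle f,\theta_t\rangle}\in[0,1]$ (with the convention $e^{-\infty}=0$), so the expectation is automatically well defined even if $\theta_t$ fails to be locally finite. The only cosmetic difference is that the paper realizes this expectation as a $\liminf$ (in fact a decreasing limit) over truncations to the first $L$ initial atoms of the product of conditional Laplace factors, whereas you define $\langle f,\theta_t\rangle$ pathwise as a $[0,\infty]$-valued sum; the two definitions agree by monotone convergence.
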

\ni
{\em Proof.}
Let $f$ be a function as in the Lemma and fix some $t>0$. Let us assume without loss of generality that there are countably many points in the point process $\theta\sim \pi$, and let us write $\theta$ as  
\[
\theta = \sum_{i=1}^\infty \delta_{x_i}\,.
\]
By definition of the BBM process with critical drift $t\mapsto \theta_t$, we have 
\begin{align*}
\Eb{e^{-\<{f,\theta}}}:= \int_{\calN} d\pi(\theta) e^{-\<{f,\theta}}& = \int_{\calN} d\pi(\theta) \EFK{}{\theta}{e^{-\<{f,\theta_t}}}  \\
& =  \int_{\calN} d\pi(\theta) \liminf_{L\to \infty} \Eb{ \prod_{i=1}^L \Eb{ \prod_{k=1}^{n^i(t)} e^{-\<{f, \delta_{x_i+\chi_k^i(t)-\sqrt{2}t}}} } } \\
& =: \Eb{e^{-\<{f,\theta_t}}}\,. 
\end{align*}
This shows that the Laplace transforms $\Eb{e^{-\<{f,\theta_t}}}$ are well defined even if a coming down from $-\infty$ occurs (in which case these Laplace transforms will be identically zero as soon as $f\not\equiv 0$).
\qed 

We may now give a precise definition of the invariant measures for the BBM process with critical drift.  
\begin{definition}[Fixed points / invariant measure]\label{d.FP}
A probability measure $\pi$ on $\calN$ is said to be an invariant measure for the BBM with critical drift if when starting from $\theta_0=\theta \sim \pi$, one has for any $t>0$, $\theta_t \sim \pi$.  This is equivalent to asking that for any $f\in C_c^+(\R)$,
\[
\Eb{e^{-\<{f,\theta}}} = \Eb{e^{-\<{f,\theta_t}}}\,,
\]
which is well defined thanks to Lemma \ref{l.Lthetat}. 

If furthermore the probability measure $\pi$ is supported on $\M$ (since $\M$ is not closed, we mean here that if $\theta\sim \pi$, then $\theta\in \M$ a.s.), then we will say that $\pi$ is an invariant measure for the BBM with critical drift if and only if for any function $f$ of the form~\eqref{f-type}, one has 
\[
\Eb{e^{-\<{f,\theta}}} = \Eb{e^{-\<{f,\theta_t}}}\,.
\]
We call these measures $\pi$ the  fixed-points of BBM with critical drift. 
\end{definition}

\begin{remark}\label{}
Note that our main Theorem \ref{main-thm} characterizes all fixed points $\pi$ supported in $\M$ (in the above sense) but does not exclude a priori the existence of exotic fixed points on $\calN$. See Question \ref{q.calN}.
\end{remark}

\subsection{Extremal process of BBM.}\label{ss.extreme}

For a binary  BBM $(\{\chi_k(t); 1\leq k\leq n(t)\}, t\ge0)$ on the real line, recall we denote the maximal position at time $t$ by
\[
M_t:=\max_{1\leq k\leq n(t)}\chi_k(t). 
\]
Bramsom \cite{bramson78}, \cite{bramson83} proved that $M_t-m(t)$ converges in law to some non-degenerate random variable, where 
\begin{align}\label{e.m}
m(t):= \sqrt{2}t - \frac{3}{2\sqrt{2}}\log_{+}(t).
\end{align}
Then, Lalley and Sellke showed in \cite{lalley-sellke} that the limiting distribution function is $\E[e^{-\Cb_MZ_\infty e^{-\sqrt{2}x}}]$ where $\Cb_M$ is some positive constant which will appear again in \eqref{omega} and $Z_\infty\in(0,\infty)$ is the a.s. limit of the so-called \textit{derivative martingale}
\[Z_t := \sum_{k=1}^{n(t)}(\sqrt{2}t- \chi_k(t))\exp\bigl\{-\sqrt{2}(\sqrt{2}t- \chi_k(t))\bigr\}.\]
Later, it was proven in \cite{ABK13} and \cite{aidekon13} that the point process defined by
\begin{equation}\label{abk-conv}
    \mathcal{E}_t := \sum_{k=1}^{n(t)}\delta_{\chi_k(t)- m(t)} 
\end{equation}
converges to a non-trivial point process $\mathcal{E}_{\infty}$ as $t\to \infty$, in the sense of vague convergence of distributions on the space $(\calN, d)$. The point process $\mathcal{E}_\infty$ is called the \textit{(limiting) extremal point process of BBM}. The law of $\mathcal{E}_\infty$ can be described as follows. 

Let $\mathcal{P}=\sum_{i\ge1}\delta_{p_i}$ be a Poisson point process (PPP) independent of $Z_\infty$ and with intensity $\sqrt{2}\Cb_Me^{-\sqrt{2}x}dx$. For each atom $p_i$ of $\mathcal{P}$, we attach a point process $\mathcal{D}^i=\sum_{j\ge1}\delta_{\mathcal{D}_j^i}$ where $\calD^i$, $i\ge1$ are i.i.d. copies of certain point process $\calD$ and independent of $(\mathcal{P},Z_\infty)$. In this way, we get
\begin{equation}\label{def-Et}
 \mathcal{E}_{\infty} = \sum_{i, j} \delta_{p_i + \mathcal{D}^i_j + \frac{1}{\sqrt{2}}\log(Z_\infty)}.
 \end{equation}
The point process $\mathcal{E}_\infty$ is thus called a decorated Poisson point process with decoration process $\calD$. Moreover, the decoration process $\calD$ is a point process supported on $(-\infty,0]$ with an atom at $0$ and its precise law is described in (6.8) of \cite{aidekon13}. In \cite{ABK13}, it is shown that $\mathcal{E}_\infty$ is a fixed point of BBM with critical drift.

As we have seen in Section \ref{ss.main}, it is often convenient to remove the randomness coming from the derivative martingale $Z_\infty$. The standard option is to consider the following point process:
\begin{equation}\label{tilde-def}
\widetilde{\mathcal{E}}_\infty := T_{- \frac{1}{\sqrt{2}}\log(Z_\infty)}\mathcal{E}_\infty=\sum_{i, j} \delta_{p_i + \mathcal{D}^i_j},
\end{equation} 
where the operator $T_u$ acts on point processes $\theta$ by shifting each atom by $u$. 
This process  was proved in \cite{aidekon13} (also in \cite{ABK13} but it is not explicitly stated there) to be limit in law of 
\[\widetilde{\mathcal{E}}_{t} := \sum_{k=1}^{n(t)}\delta_{\chi_k(t)- m(t)- \frac{1}{\sqrt{2}}\log(Z_t)}.\]

\begin{remark}\label{}
Recall that we stated Theorem \ref{main-thm} with the help of the point process $\bar\calE_\infty$. It is easy to check that $\tilde \calE_\infty$ 
is nothing but a deterministic shift of $\bar \calE_\infty$.  As such Theorem \ref{main-thm} also holds if one replaces $\bar\calE_\infty$ by $\tilde \calE_\infty$ (we have chosen the less standard  $\bar \calE_\infty$ in Section \ref{ss.main} as we did not need to introduce the constant $\Cb_M$). 
\end{remark}

\subsection{BBM and FKPP equation.} \label{prem-FKPP}

For the binary BBM, let us write $\calB_t:=\sum_{k=1}^{n(t)}\delta_{\chi_k(t)}$ its associated point process at time $t$. As $n(t)\in \N^*$ a.s. for any $t>0$, $\calB_t\in \M$ a.s. and for any function $f$ of the form \eqref{f-type}, we have
\[
\Phi_{\calB_t}(f)=\E[e^{-\<{f,\calB_t}}]=\E\left[\prod_{k=1}^{n(t)}e^{-f(\chi_k(t))}\right].
\]
In particular, the distribution function of the maximal position $M_t$ is $\P(M_t\le x)=\E[\prod_{k=1}^{n(t)}\ind{\chi_k(t)\le x}]$ which formally corresponds to $f(\cdot):=\infty 1_{\cdot \geq x}$.

Now let us state the following Lemma which highlights the connection between BBM and the Fisher-Kolmogorov-Petrovsky-Piscounov (FKPP) equation. This  was observed by McKean \cite{McKean75} and appeared also in \cite{Skorohod64} and Ikeda, Nagasawa, and Watanabe \cite{Ikeda-Nagasawa-Watanabe1}, \cite{I-N-W2}, \cite{I-N-W3}.

\begin{lemma}[See Lemma $5.5$ in \cite{bovier-book}]\label{BBM=FKPP}
For any measurable function $\varphi:\mathbb{R} \to [0,1]$, let
\begin{equation}\label{u-def}
u(t,x) = 1 - \mathbb{E}[\Pi_{k=1}^{n(t)}\{1-\varphi(x- \chi_k(t))\}].
\end{equation}
Then $u$ solves the following FKPP equation
\begin{equation}\label{FKPP}
\partial_t u = \frac{1}{2}\partial_x^2 u + u- u^2.
\end{equation}
 with the initial condition $u(0,x) = \varphi(x)$. 
\end{lemma}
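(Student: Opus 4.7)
The plan is to follow McKean's classical argument: condition on the first branching time, write a renewal-type integral equation for $v(t,x):=1-u(t,x)$, and recognise it as the mild (Duhamel) formulation of the FKPP equation.

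First I would introduce the first branching time $\tau$, which by definition of binary BBM at rate one is exponentially distributed with parameter $1$ and independent of the initial Brownian trajectory. On $\{\tau>t\}$ the system consists of a single Brownian motion $B_t$ started at $0$, while on $\{\tau=s\}$ with $s<t$ the branching structure restarts: we have a Brownian motion up to time $s$ and then, conditionally on its position $B_s$, two independent BBMs $\calB^{(1)}_{t-s}$ and $\calB^{(2)}_{t-s}$, each of law $\calB_{t-s}$. Applying this decomposition to $v(t,x)=\E\bigl[\prod_{k=1}^{n(t)}(1-\varphi(x-\chi_k(t)))\bigr]$ and using independence of the two subtrees to factor the conditional expectation, I would obtain the integral equation
\begin{equation*}
v(t,x) = e^{-t}\,P_t(1-\varphi)(x) + \int_0^t e^{-s}\, P_s\bigl(v(t-s,\cdot)^2\bigr)(x)\, ds,
\end{equation*}
where $P_s$ denotes the heat semigroup on $\R$ with generator $\tfrac12\partial_x^2$. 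The initial condition $v(0,x)=1-\varphi(x)$ is immediate.

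Next I would recognise this as the Duhamel/mild formulation of the semilinear parabolic equation
\begin{equation*}
\partial_t v = \tfrac12 \partial_x^2 v - v + v^2,
\end{equation*}
obtained by viewing $-v+v^2$ as a source term and writing $v(t,x)=e^{-t}P_t v(0,\cdot)(x)+\int_0^t e^{-(t-s)}P_{t-s}(v(s,\cdot)^2)(x)\,ds$, then changing variables $s\mapsto t-s$. Setting $u=1-v$ and expanding $v^2=(1-u)^2=1-2u+u^2$ turns this equation into the FKPP equation $\partial_t u = \tfrac12\partial_x^2 u + u - u^2$ with $u(0,\cdot)=\varphi$, as required.

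The last point to address is regularity: the construction only yields a mild solution a priori, whereas the statement is phrased in classical terms. Since $\varphi$ takes values in $[0,1]$, bootstrapping gives $v(t,\cdot)\in[0,1]$, so the source term $-v+v^2$ is globally Lipschitz and bounded; standard parabolic regularity for the heat equation with a bounded locally Lipschitz nonlinearity then upgrades the mild solution to a classical $C^{1,2}$ solution on $(0,\infty)\times\R$, which is where I expect the only real technical work to lie (and it is routine, following e.g.\ the presentation in Chapter~5 of \cite{bovier-book}). Uniqueness of such solutions given the bounded initial data ensures that the probabilistic formula \eqref{u-def} agrees with the analytic FKPP solution, completing the proof.
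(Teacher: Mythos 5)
Your proof is correct: conditioning on the first branching time, deriving the renewal equation $v(t,x)=e^{-t}P_t(1-\varphi)(x)+\int_0^t e^{-s}P_s\bigl(v(t-s,\cdot)^2\bigr)(x)\,ds$, identifying it as the Duhamel formulation and bootstrapping regularity is precisely McKean's classical argument. The paper itself gives no proof of this lemma (it quotes Lemma 5.5 of \cite{bovier-book}), and your argument is essentially the one found in that reference, so there is nothing to add.
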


In the following, for convenience, we usually write $u_\varphi$ for the solution of FKPP equation with initial function $\varphi$. In particular, when $\varphi(x)=\ind{x\le0}$, we write $u_M(t,x)$ for the associated solution as $u_M(t,x)=\P(M_t\ge x)$.

Immediately, one sees that if $\varphi(x)=1-e^{-f(-x)}$, we get that $\Phi_{\calB_t}(f)=1-u_\varphi(t,0)$. Moreover, for the point process $\calE_t$ in \eqref{abk-conv} which is $\calB_t$ shifted by $-m(t)$, one has $\Phi_{\calE_t}(f)=1-u_\varphi(t,m(t))$. So, for any $x\in\R$, 
\[
u_\varphi(t, m(t)+x)=1-\Phi_{\calE_t}(f(\cdot-x))\,.
\]

\subsection{Bramson's convergence results on the solutions of FKPP.}\label{ss.bramson1}
Next, let us state a convergence result on the solutions of FKPP equation, due to Bramson \cite{bramson83}. (See also Theorem 4.2 of \cite{ABK13} for a more complete presentation). We only state here a partial result which will be sufficient for our proof. Recall also the definition of the function $t\mapsto m(t)$ in~\eqref{e.m}.

\begin{theorem}[\cite{bramson83}]\label{cvgFKPP}
Let $u_\varphi$ be a solution of the FKPP equation \eqref{FKPP} with initial condition $u(0,x)=\varphi(x)$ 
where $\varphi$ is measurable function satisfying the following conditions:
\begin{align}
(i)\hspace{1cm}& 0\leq \varphi(x) \leq 1; \label{cond1}\\
(ii)\hspace{1cm}&\mbox{For some} \hspace{1mm} v>0, L>0, N>0, \int_x^{x+N} \varphi(y)dy > v \hspace{2mm}\forall x \leq -L; \label{cond2}\\
(iii)\hspace{1cm}&\sup\{x\in\R | \varphi(x) > 0\} < \infty. \label{cond3}
\end{align}
Then as $t\to\infty$, uniformly in $x$, $u_\varphi(t,m(t)+x)$ converges to a positive function $\omega(x)$ where $\omega$ is the unique solution (up to a $\varphi$-dependent translation) of the equation $\frac{1}{2}\omega''+\sqrt{2}\omega'+\omega-\omega^2=0$. This limiting function $\omega$ is called the traveling wave.
\end{theorem}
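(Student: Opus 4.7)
The plan is to follow Bramson's classical strategy, based on the probabilistic representation of Lemma~\ref{BBM=FKPP} combined with comparison arguments for the FKPP equation. I would first reduce to the Heaviside initial condition $\varphi = \mathbf{1}_{(-\infty,0]}$, for which $u_\varphi(t,x) = \P(M_t \ge x)$. Conditions (i)--(iii) allow one to sandwich $\varphi$ between translates of Heaviside functions: (iii) gives an upper bound $\varphi \le \mathbf{1}_{(-\infty,b]}$, while (ii) together with (i) gives a lower bound of the form $\varphi \ge c \, \mathbf{1}_{(-\infty,-L]}$ after a short heat-smoothing / averaging argument. The maximum principle for FKPP then pinches $u_\varphi(t,m(t)+x)$ between two Heaviside-type solutions whose large-$t$ limits can differ only by a translation of the wave profile.

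For the Heaviside case, I would identify the linear speed and the logarithmic correction via a truncated first-moment analysis. The many-to-one identity gives $\E[\#\{k : \chi_k(t) \ge y\}] = e^t \P(B_t \ge y)$, and Gaussian tails show this mean is $O(1)$ at $y \approx \sqrt{2} t$. However, the naive first-moment count overestimates $\P(M_t \ge y)$ because of clustering from common ancestors, so one should restrict to BBM trajectories that stay below the moving envelope $s \mapsto m(s)+x$ throughout $[0,t]$. By a Girsanov tilt removing the drift $\sqrt{2}$, the problem reduces to a ballot-type estimate for Brownian motion constrained to stay positive, which supplies an additional polynomial factor of order $t^{-3/2}$. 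Balancing this factor against $e^t$ and the Gaussian density produces exactly the recentering $m(t) = \sqrt{2}t - \tfrac{3}{2\sqrt{2}}\log t$. Bramson's $\psi$-function is the technical device that makes these truncated first-moment bounds quantitative and, crucially, uniform in the spatial variable~$x$.

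To identify the limit as a traveling wave, I would set $w(t,x) := u_\varphi(t, m(t)+x)$, which, modulo the vanishing contribution of $m'(t) - \sqrt{2} \to 0$, essentially satisfies $\partial_t w = \tfrac12 \partial_x^2 w + \sqrt{2}\partial_x w + w - w^2$. Parabolic regularity combined with the a priori bounds $0 < c_1(x) \le w(t,x) \le c_2(x) < 1$ on compact $x$-intervals yields equicontinuity, so any subsequential uniform limit is a stationary solution of this equation, i.e.\ satisfies the traveling-wave ODE $\tfrac12 \omega'' + \sqrt{2}\omega' + \omega - \omega^2 = 0$. The sandwich rules out the trivial solutions $0$ and $1$, and uniqueness of the critical traveling wave up to translation pins down $\omega$, upgrading subsequential convergence to full convergence. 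Uniformity in $x$ then follows from the monotonicity of $x \mapsto u_\varphi(t,x)$ together with pointwise convergence to a continuous limit, by a standard Dini-type argument.

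The main obstacle is the precise logarithmic correction. A plain first-moment computation delivers the linear speed $\sqrt{2}t$ but cannot produce the $-\tfrac{3}{2\sqrt{2}}\log t$ shift; obtaining this requires quantitatively controlling the probability that the extremal trajectory stays below a moving curve over the entire interval $[0,t]$, a genuinely singular constraint that couples the linearized regime far above $m(t)$ (where the $u^2$ term is negligible) to the non-linear regime near $m(t)$ (where trajectories must be truncated to handle branching-induced clustering). This matched-asymptotic analysis is precisely what Bramson's $\psi$-function is designed to handle, and it is also where uniformity in $x$ demands extra care, since $x$ enters both the height of the barrier and the terminal constraint of the underlying Brownian bridge estimate.
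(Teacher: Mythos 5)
This statement is not proved in the paper at all: it is Bramson's convergence theorem, imported verbatim from \cite{bramson83} (see also Theorem 4.2 of \cite{ABK13}), so there is no internal proof to compare against. Your proposal is a reasonable sketch of the classical strategy behind Bramson's memoir, but as a proof it has a genuine gap at the decisive step, namely the passage from subsequential limits to actual convergence.

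The problem is the final identification argument. The traveling-wave ODE $\tfrac12\omega''+\sqrt 2\,\omega'+\omega-\omega^2=0$ admits a one-parameter family of solutions (all translates of one profile), and your sandwich between two Heaviside-type solutions only confines the recentered solution $w(t,\cdot)=u_\varphi(t,m(t)+\cdot)$ between two translates of the wave; it does not prevent different subsequences $t_k\to\infty$ from converging to \emph{different} translates. Saying that ``uniqueness of the critical traveling wave up to translation pins down $\omega$'' is therefore circular: what must be shown is precisely that no bounded oscillatory shift survives once one recenters by $m(t)=\sqrt2\,t-\tfrac{3}{2\sqrt2}\log t$, i.e.\ that the translate is asymptotically unique. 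The same issue already appears in your reduction step, where you assert that the limits of two Heaviside-type solutions ``can differ only by a translation of the wave profile'' --- that is the content of the theorem, not a consequence of the maximum principle. Closing this gap is the hard core of Bramson's work: one needs the two-sided $\psi$-function bounds (quoted in the paper as Proposition \ref{bramson}), valid uniformly for $t\ge 8r$ with constants $\gamma_r\downarrow 1$, which express $u_\varphi(t,m(t)+x)$ in terms of the solution at a much earlier time $r$ through a Brownian-bridge kernel; letting $t\to\infty$ first and then $r\to\infty$ forces the Cauchy property of $t\mapsto u_\varphi(t,m(t)+x)$ and hence a single limiting translate. Your sketch mentions $\psi$ only as a device for the first-moment asymptotics of $m(t)$, not in this role, so the argument as written would not yield the stated convergence. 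Two secondary points: condition \eqref{cond2} does not give $\varphi\ge c\,\mathbf 1_{(-\infty,-L]}$ pointwise (only after running the equation for a short time, as you hint), and the Dini-type upgrade to uniformity in $x$ uses monotonicity of $x\mapsto u_\varphi(t,x)$, which holds for Heaviside data but not for general $\varphi$ satisfying \eqref{cond1}--\eqref{cond3}, so uniformity over all of $\R$ also needs the quantitative $\psi$-estimates rather than a soft argument.
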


Note that for any function $f$ of the form \eqref{f-type}, $\varphi(x)=1-e^{-f(-x)}$ satisfies the conditions \eqref{cond1}, \eqref{cond2}, \eqref{cond3}. We hence get the convergence of $\Phi_{\calE_t}(f)$. Another example is when $\varphi(x)=\ind{x\le0}$, this theorem shows that 
\[
u_M(t,x+m(t))=\P(M_t\ge m(t)+x)
\]
converges to some limit $\omega_M(x)$ which, according to \cite{lalley-sellke}, turns out to be $1-\E[e^{-\Cb_M Z_\infty e^{-\sqrt{2}x}}]$. Moreover, it is also known (see e.g. \cite{bramson83}) that there exists a constant $\Cb_M>0$ such that
\begin{equation}\label{omega}
\omega_M(x) \sim \Cb_Mxe^{-\sqrt{2}x} \hspace{2mm}\mbox{as}\hspace{2mm} x\to +\infty,
\end{equation}
Then with a little more effort, one sees from Theorem \ref{cvgFKPP} that for any $\varphi$ satisfying \eqref{cond1}, \eqref{cond2}, \eqref{cond3}, there exists some constant $C_\varphi>0$ such that
\[
\lim_{t\to\infty}u_\varphi(t, x+m(t))=1-\E[e^{-C_\varphi Z_\infty e^{-\sqrt{2}x}}].
\]
In fact, in view of Corollary 6.51 of \cite{bovier-book}, $C_\varphi=\lim_{x\uparrow\infty}\lim_{t\to\infty}\frac{e^{\sqrt{2}x}}{x}u_\varphi(t,x+m(t))$.

As mentioned above, we can take $\varphi(x)=1-e^{-f(-x)}$ for any $f$ of the form \eqref{f-type} and obtain that
\begin{equation}\label{cvgEt}
\lim_{t\to\infty}\E[e^{-\<{f(\cdot-x),\calE_t}}]=1-\lim_{t\to\infty}u_\varphi(t, x+m(t))=\E[e^{-\Cb(f) Z_\infty e^{-\sqrt{2}x}}]
\end{equation}
where we write $\Cb(f)$ for $C_\varphi$. {\em (N.B we use two different notations here for the same constant in order to avoid confusions. Indeed in \cite{ABK13}, $\Cb(f)$ is defined for $f$ not for the initial function $\varphi$).} We will need the following explicit expression for the constant $\Cb(f)$.
\begin{proposition}[Proposition 7.9 of \cite{bovier-book}]
For any $f$ of the form \eqref{f-type}, the positive  constant $\Cb(f)$ can be written as the following limit which exists
\begin{equation}\label{constant-f}
\Cb(f)=C_\varphi=\lim_{r\to\infty}\sqrt{\frac{2}{\pi}}\int_0^\infty u_\varphi(r, y+\sqrt{2}r)ye^{\sqrt{2} y} dy.
\end{equation}
(N.B. It is known that \eqref{cvgEt} and \eqref{constant-f} remain valid if we take $\varphi(x)=1-e^{-f(-x)}$ with  $f\in C_c^+(\R)$, see \cite{ABK13}). 
\end{proposition}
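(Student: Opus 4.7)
The plan is to view $I(r) := \sqrt{2/\pi} \int_0^\infty u_\varphi(r, y+\sqrt{2}r)\, y\, e^{\sqrt{2}y}\, dy$ as a weighted first moment of $u_\varphi(r,\cdot)$ against the zero eigenfunction $y \mapsto y e^{-\sqrt{2}y}$ of the linearized FKPP operator $L := \frac{1}{2}\partial_x^2 + \sqrt{2}\partial_x + 1$ (a direct computation shows $L(y e^{-\sqrt{2}y}) = 0$). This is precisely the quantity that would be exactly conserved in $r$ if $u_\varphi$ solved the linear equation $u_t = \frac{1}{2}u_{xx} + u$ instead of FKPP; the nonlinearity $-u^2$ produces a controlled correction that ultimately selects the finite limit $C_\varphi$.

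To execute this, I would use Lemma \ref{BBM=FKPP} together with the many-to-one lemma to write, in the far right tail where $u_\varphi$ is small,
$u_\varphi(r, y + \sqrt{2}r) \approx \E\bigl[\sum_k \varphi(y+\sqrt{2}r - \chi_k(r))\bigr] = e^r\, \E\bigl[\varphi(y+\sqrt{2}r - B_r)\bigr]$,
and then substitute this expression inside $I(r)$. A Gaussian computation at $B_r \approx \sqrt{2}r + O(1)$ produces an $e^{-r}$ that cancels the branching factor $e^r$ and yields, modulo the Bramson logarithmic correction, an expression controlled by $\int \varphi(-w)\,e^{-\sqrt{2}w}\,dw$ with a $\sqrt{2/\pi}\cdot (2\pi r)^{-1/2}$ prefactor. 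To identify the resulting constant as $C_\varphi$, I would invoke Bramson's Theorem~\ref{cvgFKPP}: since $\sqrt{2}r = m(r) + \frac{3}{2\sqrt{2}}\log r$ and $u_\varphi(r, z + m(r)) \to \omega(z)$ with $\omega(z) \sim C_\varphi\, z\, e^{-\sqrt{2}z}$, substituting $z = y + \frac{3}{2\sqrt{2}}\log r$ and matching with the linearized expression above pins down the limit. The prefactor $\sqrt{2/\pi}$ emerges directly from the Gaussian normalization.

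The main obstacle is the precise matching across three asymptotic scales of $y$ in the integral: the wave scale $y = O(1)$, where $\omega(y) \sim C_\varphi\, y\, e^{-\sqrt{2}y}$ is operative; the logarithmic scale $y \sim \log r$, where the Bramson shift $\frac{3}{2\sqrt{2}}\log r$ interacts delicately with the eigenfunction weight; and the diffusive scale $y \sim \sqrt{r}$, beyond which the Gaussian tail $e^{-y^2/(2r)}$ provides the integrability needed to counteract the growing factor $y e^{\sqrt{2}y}$. Showing that all three scales contribute in a balanced way to the single $O(1)$ limit $C_\varphi$ is the technical heart of the proof. The cleanest route, following Bramson's original analysis, is to represent $u_\varphi(r, \cdot)$ via BBM killed on an appropriate space-time barrier of slope $\sqrt{2}$ with a logarithmic curvature; the associated first-moment computation then delivers the normalization $\sqrt{2/\pi}$ automatically and makes convergence in all three regimes simultaneous.
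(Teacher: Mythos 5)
You should first note that the paper does not prove this statement at all: it is imported verbatim as Proposition 7.9 of \cite{bovier-book} (essentially Lemmas 4.5--4.8 of \cite{ABK13}), so your attempt has to stand on its own as a proof, and as written it does not. The central quantitative step fails. The first-moment (many-to-one) linearization $u_\varphi(r,y+\sqrt{2}r)\approx e^{r}\,\E[\varphi(y+\sqrt{2}r-B_r)]$ overestimates $u_\varphi$ by a factor of order $r/y$ precisely because of the ballot-type repulsion encoded in Bramson's $-\tfrac{3}{2\sqrt{2}}\log$ correction: by Lemma \ref{sharp-bound} (equivalently Lemma \ref{decoration-lemma}), in the window $y\asymp\sqrt{r}$ one has $u_\varphi(r,y+\sqrt{2}r)\asymp y\,r^{-3/2}e^{-\sqrt{2}y-\frac{y^2}{2r}}$, whereas the first moment is of order $r^{-1/2}e^{-\sqrt{2}y-\frac{y^2}{2r}}$. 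If you insert your linear approximation into \eqref{constant-f} you get
\begin{equation*}
\sqrt{\tfrac{2}{\pi}}\int_0^\infty \frac{y}{\sqrt{2\pi r}}\,e^{-\frac{y^2}{2r}}\,dy \asymp \sqrt{r}\ \longrightarrow\ \infty ,
\end{equation*}
a divergent quantity rather than $\Cb(f)$. So the premise that ``the nonlinearity produces a controlled correction'' is wrong in the only region that matters: the nonlinearity (equivalently the barrier/repulsion effect) changes the order of magnitude of $u_\varphi$ there, and no naive conservation argument around the linear flow can produce the finite limit.

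Two further points. First, the three scales do not ``contribute in a balanced way'': by Lemma \ref{y-order} the whole mass of the integral sits in the diffusive window $y\in[A_1\sqrt{r},A_2\sqrt{r}]$, while the wave scale $y=O(1)$ and the logarithmic scale contribute $o(1)$ (their integrand is $O((\log r)^2 r^{-3/2})$ on bounded sets). Consequently your proposed identification of the constant --- invoking Theorem \ref{cvgFKPP}, substituting $z=y+\tfrac{3}{2\sqrt{2}}\log r$ and then using $\omega(z)\sim C_\varphi z e^{-\sqrt{2}z}$ --- is an uncontrolled interchange of limits: uniform convergence of $u_\varphi(r,m(r)+\cdot)$ to $\omega$ says nothing about evaluating at arguments $z\asymp\sqrt{r}$ that diverge with $r$; what is needed there is exactly the uniform estimate of Lemma \ref{decoration-lemma}, which is itself derived from Bramson's $\psi$-function bounds (Proposition \ref{bramson}). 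Your closing sentence correctly names the killed-BBM/barrier representation as ``the cleanest route'', but that representation is where the entire proof lives, including the existence of the limit asserted in \eqref{constant-f}, and none of it is carried out. A cosmetic remark: the weight $y e^{\sqrt{2}y}$ appearing in $I(r)$ is the zero mode of the adjoint operator $\tfrac12\partial_y^2-\sqrt{2}\partial_y+1$ in the moving frame, while $y e^{-\sqrt{2}y}$ is the zero mode of $L$ itself; the conservation heuristic requires the adjoint mode, so your labeling should be corrected, though this is minor compared with the issues above.
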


Recall that it was proved in \cite{ABK13} and \cite{aidekon13} that the point process $\calE_t$ converges in law to the point process $\calE_\infty$. In view of \eqref{cvgEt}, we then have the following Laplace functional for the limiting extremal process $\calE_\infty$.

\begin{proposition}[Proposition $3.2$ of \cite{ABK13}] \label{Lap-Et}
For any $f$ of form \eqref{f-type}, 
\[\E[e^{-\langle f,\mathcal{E}_\infty \rangle}] = \E[e^{-Z_\infty\Cb(f)}].\]
\end{proposition}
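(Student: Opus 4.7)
The plan is to combine three ingredients already assembled in the preliminaries: the FKPP representation of Laplace functionals of BBM, Bramson's convergence theorem in the form recalled in Section \ref{ss.bramson1}, and the convergence in law $\calE_t \to \calE_\infty$ established in \cite{ABK13,aidekon13}. The outcome will be an identification of the limit $\lim_t \E[e^{-\<{f,\calE_t}}]$ computed in two different ways, one via the FKPP PDE and the other via the limiting random point process.

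First, given $f$ of the form \eqref{f-type}, I would define $\varphi(x) := 1 - e^{-f(-x)}$ and verify that $\varphi$ is measurable with $0 \leq \varphi \leq 1$, has bounded support on the positive side, and satisfies the lower bound on integrals \eqref{cond2}; this is immediate from the indicator structure of $f$. Then Lemma \ref{BBM=FKPP} applied at $x = m(t)$ gives
\[
\E[e^{-\<{f,\calE_t}}] \;=\; \E\Bigl[\prod_{k=1}^{n(t)} \bigl(1 - \varphi(m(t) - \chi_k(t))\bigr)\Bigr] \;=\; 1 - u_\varphi(t, m(t)).
\]

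Second, I would invoke Bramson's Theorem \ref{cvgFKPP} together with the refinement recalled after it, which yields
\[
\lim_{t \to \infty} u_\varphi(t, m(t)+x) \;=\; 1 - \E\bigl[e^{-C_\varphi Z_\infty e^{-\sqrt{2}x}}\bigr]
\]
uniformly in $x$ on compacts, and in particular at $x = 0$ gives $\lim_t u_\varphi(t, m(t)) = 1 - \E[e^{-C_\varphi Z_\infty}]$. Matching this with the FKPP identity above,
\[
\lim_{t \to \infty} \E[e^{-\<{f,\calE_t}}] \;=\; \E\bigl[e^{-C_\varphi Z_\infty}\bigr] \;=\; \E\bigl[e^{-\Cb(f) Z_\infty}\bigr],
\]
since by the formula \eqref{constant-f} the constant $C_\varphi$ and $\Cb(f)$ coincide.

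Finally I would identify this limit with $\E[e^{-\<{f,\calE_\infty}}]$. The only delicate point is that $f$ is not in $C_c^+(\R)$, so the vague convergence $\calE_t \to \calE_\infty$ does not immediately transport through the functional $\eta \mapsto e^{-\<{f,\eta\>}}$. However, since $\calE_\infty$ is a decorated PPP with a continuous intensity, almost surely none of the (finitely many) discontinuity levels $b_k$ of $f$ carry atoms of $\calE_\infty$, and since $\calE_\infty \in \M$ a.s., the functional $\eta \mapsto \<{f,\eta}$ is a.s.\ continuous at $\calE_\infty$ in the vague (or equivalently in the relevant weighted) topology on $\M$. A standard continuous mapping argument, or alternatively an approximation of each indicator $\1_{x > b_k}$ from above and below by continuous compactly supported functions combined with the a.s.\ finiteness of $\calE_\infty$ on $(b_k,\infty)$, then yields $\E[e^{-\<{f,\calE_t}}] \to \E[e^{-\<{f,\calE_\infty}}]$, which combined with the previous display gives the claimed identity. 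The main technical point is precisely this last continuity argument, but it is routine given that $\calE_\infty$ lies in $\M$ and has no atoms at the finitely many thresholds $b_k$ almost surely.
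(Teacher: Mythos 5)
Your outline follows exactly the route the paper itself indicates for this statement (which it quotes from Proposition 3.2 of \cite{ABK13}): write $\E[e^{-\langle f,\calE_t\rangle}]=1-u_\varphi(t,m(t))$ via Lemma \ref{BBM=FKPP} with $\varphi(x)=1-e^{-f(-x)}$, apply Bramson's Theorem \ref{cvgFKPP} and the identification $C_\varphi=\Cb(f)$ from \eqref{constant-f} to get \eqref{cvgEt}, and then transfer the limit to $\calE_\infty$ using the convergence in law $\calE_t\to\calE_\infty$. The first two steps are correct and are precisely the computation behind \eqref{cvgEt}.

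However, your justification of the last step, as written, does not work. The map $\eta\mapsto\langle f,\eta\rangle$ is \emph{not} continuous at $\calE_\infty$ for the vague topology, even though $\calE_\infty\in\M$ a.s.\ and a.s.\ carries no atom at the thresholds $b_k$: vague convergence does not prevent mass from escaping to $+\infty$ (e.g.\ $\delta_n\to 0$ vaguely while $\langle \1_{x>b},\delta_n\rangle\equiv 1$), and since $f$ is supported on a half-line unbounded to the right, this is exactly the dangerous direction. For the same reason your alternative suggestion of sandwiching each indicator $\1_{x>b_k}$ ``from above'' by continuous \emph{compactly supported} functions is impossible. The missing ingredient is a uniform-in-$t$ control of the right tail of $\calE_t$, which is available from the tightness of the recentred maximum: since $f$ is bounded and the difference $e^{-\langle f,\calE_t\rangle}-e^{-\langle f\1_{(-\infty,K]},\calE_t\rangle}$ vanishes unless $\calE_t((K,\infty))\geq 1$, one has $\bigl|\E[e^{-\langle f,\calE_t\rangle}]-\E[e^{-\langle f\1_{(-\infty,K]},\calE_t\rangle}]\bigr|\leq \P(M_t-m(t)>K)$, which is small uniformly in $t$ for $K$ large (and similarly for $\calE_\infty$). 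After this truncation the test function is bounded with compact support, and your sandwiching by continuous compactly supported functions, together with the a.s.\ absence of atoms of $\calE_\infty$ at the finitely many discontinuity points, does finish the argument. With this patch your proof is complete and coincides with the paper's (and \cite{ABK13}'s) approach.
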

\ni
It turns out that the key constant $\Cb(f)$ (which describes the Laplace transform of $\calE_\infty$) can be expressed out of the constant $\Cb_M$ from~\eqref{omega} through the decoration process $\calD$ (defined in \eqref{def-Et}) as follows:
\begin{equation}\label{e.CfCm}
\Cb(f) = \sqrt{2}\Cb_M \int_{\R}\E[1-e^{-\<{f(\cdot+y), \calD}}]e^{-\sqrt{2}y}dy\,.
\end{equation}
We will explain how to derive this useful identity in the next subsection. (See also \cite{ABK13} where this explicit identity for $\Cb(f)$ is implicit). 

\subsection{Limiting decoration process $\calD$ and uniform control in ancestors.}
In this Subsection, we define what is the limiting decoration process $\calD$. A special attention will be given to uniform results obtained in \cite{ABK13} on starting (conditioned) BBM uniformly from initial points $x\in[-\tfrac 1 \delta \sqrt{t}, -\delta \sqrt{t}]$. These uniform results will be key to our approach.

Recall that $M_t=\max_{1\leq k\leq n(t)}\chi_k(t)$ is the maximum of a BBM starting at the origin at time 0. Let also  $\overline{M}_t:=M_t-\sqrt{2}t$ and $\calD_t=\sum_{k=1}^{n(t)}\delta_{\chi_k(t)-M_t}$ the decoration process at time $t$.

The next Lemma is a direct consequence of Theorem 3.4 and Corollary 4.12 of \cite{ABK13} (see also (4.109) of \cite{ABK13}). It serves both as a definition of the limiting decoration process $\calD$ which is the limit in law of $\calD_t$ and it also quantifies the fact there is a certain freedom in the choice of the initial position. 

\begin{lemma}[4.109 in \cite{ABK13}]\label{new-lemma} Let $f$ be of form \eqref{f-type}  such that its support is contained in $[-K_f, \infty)$ with some $K_f\in\R$. Then, for any fixed $\delta \in (0,1)$, uniformly for $x\in[-\frac{1}{\delta}\sqrt{t},-\delta\sqrt{t}]$, the following convergence holds
\begin{align}\label{dec-conv}
\nonumber &\lim_{t\to\infty}\E\biggl[(1-e^{-\int f(x+\overline{M}_t+z)\mathcal{D}_t(dz)})\Big\vert x+\overline{M}_t\geq -K_f \biggr] \\ 
&=\int_{-K_f}^\infty \sqrt{2}\E\left[(1-e^{-\int f(y+z)\mathcal{D}(dz)})\right] e^{-\sqrt{2}y-\sqrt{2}K_f}dy.
\end{align}
\end{lemma}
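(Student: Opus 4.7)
The approach is to establish an asymptotic factorization, uniform in the window, of the conditional joint law of the shifted excess $x+\overline{M}_t+K_f$ and the decoration $\calD_t$ given $\{x+\overline{M}_t\ge -K_f\}$: the excess should converge to an $\mathrm{Exp}(\sqrt{2})$ variable, the decoration should converge to $\calD$, and the two limits should be independent. Integrating the resulting limit against the density $\sqrt{2}e^{-\sqrt{2}u}\,du$ on $[0,\infty)$ and changing variables $y:=-K_f+u$ immediately produces the right-hand side of \eqref{dec-conv}. Before entering the argument, I would record one useful reduction: because $\calD_t$ is supported in $(-\infty,0]$ and $f$ vanishes on $(-\infty,-K_f)$, the integrand $1-e^{-\int f(x+\overline{M}_t+z)\calD_t(dz)}$ is automatically zero outside $\{x+\overline{M}_t\ge -K_f\}$, so the only role of the event is to furnish the normalizing factor $\P(x+\overline{M}_t\ge -K_f)$.

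For Step 1 (convergence of the excess) I would use Bramson's precise tail asymptotics for $\P(M_t\ge \sqrt{2}t+r)$ in the regime $r=|x|-K_f+v$ of order $\sqrt{t}$. To leading order these have the form $C\,r\,t^{-3/2}e^{-\sqrt{2}r-r^2/(2t)}$, so for any fixed $v\ge 0$ the ratio
$\P(x+\overline{M}_t\ge -K_f+v)/\P(x+\overline{M}_t\ge -K_f)$ equals $e^{-\sqrt{2}v}$ up to $1+o(1)$ uniformly on the window: the Gaussian correction $e^{-vr/t-v^2/(2t)}$ tends to $1$ because $r=O(\sqrt{t})$ and $v$ is bounded. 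This yields the conditional convergence of $x+\overline{M}_t+K_f$ to $\mathrm{Exp}(\sqrt{2})$, uniformly in $x\in[-\sqrt{t}/\delta,-\delta\sqrt{t}]$.

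Step 2 (convergence of the decoration, asymptotically independent of the excess) is the real content, and I would import it from Theorem 3.4 of \cite{ABK13} together with the uniform version extracted in their Corollary 4.12. The mechanism is a spine decomposition: on the rare event $\{M_t\ge \sqrt{2}t+r\}$ with $r=\Theta(\sqrt{t})$, the extremal trajectory is, under a suitable change of measure, close to a Brownian bridge of slope $\sqrt{2}$ landing at the extreme height, and the BBM that branches off this spine during its last $O(1)$ time units produces a local cluster whose law converges to $\calD$. Since only the last $O(1)$ units of the spine matter, the limiting decoration is insensitive both to the precise conditioning height and to the starting shift $x$ within $[-\sqrt{t}/\delta,-\delta\sqrt{t}]$, and it becomes asymptotically independent of the top-particle excess, which is controlled by the many-particle spread far from the spine. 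Combining Steps 1 and 2 gives
$\E[1-e^{-\int f(-K_f+E+z)\calD(dz)}]$ with $E\sim\mathrm{Exp}(\sqrt{2})$ independent of $\calD$, matching \eqref{dec-conv}.

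The main obstacle is the uniform asymptotic independence in Step 2: one must simultaneously show that (i) the local cluster near the top stabilizes in law as $\calD$, (ii) this stabilization is insensitive to the conditioning height, and (iii) the excess and the decoration decouple, all \emph{uniformly} across a $\sqrt{t}$-wide window of shifts. This is exactly what the ballot-style first/second moment arguments of \cite{ABK13} deliver, by exploiting the decoupling between the spine and its near-time-$t$ progeny. Everything else --- the support reduction, the link to the FKPP solution via Lemma \ref{BBM=FKPP}, and the Gaussian-tail computation of Step 1 --- is comparatively routine once Bramson's precise asymptotics are available.
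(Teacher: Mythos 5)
Your proposal is correct and follows essentially the same route as the paper, which does not prove this lemma independently but cites it directly as a consequence of Theorem 3.4 and Corollary 4.12 (see (4.109)) of \cite{ABK13} — exactly the results you invoke for the uniform decoration convergence and asymptotic independence. Your additional Step 1 (the $\mathrm{Exp}(\sqrt{2})$ overshoot from the uniform tail asymptotics) and the support reduction are just the routine bookkeeping already implicit in that citation, so there is nothing genuinely different to compare.
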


The next Lemma provides related uniform control on the solutions of the FKPP equations for initial points in the same window $[-\sqrt{t}/\delta, -\delta \sqrt{t}]$. 
Recall that $u_M(t,x)=\P(M_t\ge x)$ and $u_\varphi(t,x)= 1 - \mathbb{E}\left[\prod_{k=1}^{n(t)}\{1-\varphi(x- \chi_k(t))\}\right]$.
This Lemma follows from Proposition 4.3 and Lemma 4.5 of \cite{ABK13}. 
\begin{lemma}[\cite{ABK13}]\label{decoration-lemma}
Let $\varphi(x)=1-e^{-f(x)}$ with $f$ of form \eqref{f-type}. Let $u_\varphi$ be a solution of the FKPP equation \eqref{FKPP} with initial condition $u(0,x)=\varphi(x)$. Then, for any fixed $\delta\in(0,1)$, uniformly over $x\in [-\sqrt{t}/\delta, -\delta\sqrt{t}]$, we have the following convergence as $t\to \infty$,
\begin{equation*}
\frac{t^{\frac{3}{2}}e^{\frac{x^2}{2t}}}{(-x)e^{\sqrt{2}x}}u_\varphi(t, \sqrt{2}t -x) \to \Cb(f).
\end{equation*}
Moreover, 
\begin{equation} \label{phi/M}
\frac{u_\varphi(t, \sqrt{2}t -x)}{u_M(t, \sqrt{2}t -x)} \to \frac{\Cb(f)}{\Cb_M},
\end{equation}
uniformly over $x\in [-\sqrt{t}/\delta, -\delta\sqrt{t}]$.
\end{lemma}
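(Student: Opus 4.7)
The plan is to reduce the second asymptotic \eqref{phi/M} to the first, and then focus on the first. If the first displayed asymptotic is proved uniformly in $x$ over the window $[-\sqrt{t}/\delta, -\delta\sqrt{t}]$, then applying it also to the initial datum corresponding to $u_M$ (formally $\varphi=\ind{\cdot\le 0}$, whose associated constant is $\Cb_M$) yields two asymptotics with identical prefactors $\frac{(-x)e^{\sqrt{2}x}}{t^{3/2}e^{x^2/(2t)}}$ that cancel on taking the ratio, leaving $\Cb(f)/\Cb_M$ uniformly over the window. Thus the real content is the first asymptotic.

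To prove the first asymptotic, I would start from the many-to-one identity
\[
\mathbb{E}\Bigl[\sum_{k=1}^{n(t)} g(\chi_k(t))\Bigr] = e^{t}\,\mathbb{E}[g(B_t)],
\]
applied to $g(y)=\varphi(\sqrt{2}t-x-y)$, which is the raw first moment controlling $u_\varphi(t,\sqrt{2}t-x)$ from above. After a Gaussian change of variable centered at the support of $\varphi$, this first moment factors as $\frac{1}{\sqrt{2\pi t}}\,e^{\sqrt{2}x-x^{2}/(2t)}\cdot\bigl(\int\varphi(u)e^{\sqrt{2}u}du+o(1)\bigr)$, already producing the crucial ballistic Gaussian correction $e^{-x^{2}/(2t)}$ and the exponential $e^{\sqrt{2}x}$. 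The remaining factor $(-x)/t$ that turns $1/\sqrt{t}$ into $(-x)/t^{3/2}$ is the classical ballot (or 3-Bessel) correction: only those particles whose ancestral Brownian path has stayed below the straight line from $(0,0)$ to $(t,\sqrt{2}t-x)$ contribute to $u_\varphi$, which is the standard mechanism removing the overcounting in the first moment.

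To implement this rigorously I would use Bramson's $\psi$-function representation of solutions of the FKPP equation (recalled in Section \ref{ss.bramson2}): it rewrites $u_\varphi(t,\sqrt{2}t-x)$ as an expectation along a Brownian bridge conditioned to stay below a suitable line, times a contribution from the BBM descendants near the top of the bridge. The ballot/bridge survival factor gives precisely $(-x)/t$, and the descendants' contribution near the top is treated by invoking Bramson's traveling wave convergence (Theorem \ref{cvgFKPP}) over the last $O(1)$ units of time, which via the explicit formula \eqref{constant-f} brings in the constant $\Cb(f)$.

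The main obstacle is the uniformity in $x$ across the entire window $[-\sqrt{t}/\delta,-\delta\sqrt{t}]$: near $x\sim -\delta\sqrt{t}$ one must check that the $\psi$-function correction does not blow up, while near $x\sim -\sqrt{t}/\delta$ one must control the error in approximating the Brownian bridge survival probability by its $(-x)/t$ asymptotic. Both endpoints require careful quantitative estimates on the bridge and on Bramson's convergence, which is exactly the content of Proposition 4.3 and Lemma 4.5 of \cite{ABK13}; the proof I would write is a restatement of their argument in our notation, with the observation that \eqref{phi/M} follows for free once the first asymptotic is established uniformly.
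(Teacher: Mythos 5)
Your proposal is correct and follows essentially the same route as the paper: the paper gives no independent proof of this lemma but cites it as a direct consequence of Proposition 4.3 and Lemma 4.5 of \cite{ABK13}, which is exactly where you defer the uniform estimates, and your sketch (first moment plus ballot correction, made rigorous via Bramson's $\psi$-function) is a restatement of that argument. Your reduction of \eqref{phi/M} to the first asymptotic applied to the Heaviside initial datum (with constant $\Cb_M$) is also the intended mechanism.
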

We now explain how the identity~\eqref{e.CfCm} follows from the combination of the above two lemmas.
We start by rewriting the expectation on the left side of \eqref{dec-conv} using the FKPP solution $u_\varphi$. Observe for this that
\begin{align*}
&\E\left[(1-e^{-\int f(x+\overline{M}_t+z)\mathcal{D}_t(dz)})\Big\vert x+\overline{M}_t\geq -K_f \right]\\
&\hspace{20mm}=\frac{\E\left[(1-\prod_{k=1}^{n(t)}e^{-f(x+\chi_k(t)-\sqrt{2}t)})\ind{x+M_t-\sqrt{2} t\ge -K_f}\right]}{\P(M_t \ge \sqrt{2}t-x-K_f)}\\
&\hspace{20mm}=\frac{\E\left[(1-\prod_{k=1}^{n(t)}e^{-f(x+\chi_k(t)-\sqrt{2}t)})\right]}{\P(M_t \ge \sqrt{2}t-x-K_f)}
\end{align*}
as the support of $f$ is contained in $[-K_f,\infty)$. Then note that
\begin{align*}
\E\left[(1-\prod_{k=1}^{n(t)}e^{-f(x+\chi_k(t)-\sqrt{2}t)})\right]=&u_\varphi(t,\sqrt{2}t-x),
\end{align*}
with $\varphi(x)=1-e^{-f(-x)}$. Since furthermore $\P(M_t \ge \sqrt{2}t-x-K_f)= u_M(t,\sqrt{2}t-x-K_f)$, it follows that uniformly over $x\in[-\sqrt{t}/\delta, -\delta\sqrt{t}]$
\begin{multline}\label{conditionedcvg}
\E\left[(1-e^{-\int f(x+\overline{M}_t+z)\mathcal{D}_t(dz)})\Big\vert x+\overline{M}_t\geq -K_f \right]\\
=\frac{u_\varphi(t,\sqrt{2}t-x)}{u_M(t,\sqrt{2}t-x-K_f)}
\xrightarrow{t\to\infty}e^{-\sqrt{2}K_f}\frac{\Cb(f)}{\Cb_M }
\end{multline}
by use of Lemma \ref{decoration-lemma}. Comparing it with \eqref{dec-conv}, one sees that 
\[
\Cb(f)=\Cb_M\int_{-K_f}^\infty \sqrt{2}\E\left[(1-e^{-\int f(y+z)\mathcal{D}(dz)})\right] e^{-\sqrt{2}y}dy.
\]
It remains to replace $-K_f$ by $-\infty$ in the integral. This can be easily seen as follows. Note that $\esssup\calD=0$ a.s. as $\esssup\calD_t=0$ a.s. So, this integral can be taken over $\R$ since the support of $f$ is contained in $[-K_f,\infty)$. We deduce that
\begin{equation}\label{constant-f-D}
\Cb(f)=\Cb_M\int_\R \sqrt{2}\E\left[(1-e^{-\int f(y+z)\mathcal{D}(dz)})\right] e^{-\sqrt{2}y}dy,
\end{equation}
for any $f$ of form \eqref{f-type}.

This allows us to rewrite Proposition \ref{Lap-Et} as follows
\[
\E[e^{-\<{f,\calE_\infty}}]=\E\left[\exp(-Z_\infty \Cb_M\int_\R \sqrt{2}\E\left[(1-e^{-\int f(y+z)\mathcal{D}(dz)})\right] e^{-\sqrt{2}y}dy)\right],
\]
which describes the decoration structure of $\calE_\infty$ in \eqref{def-Et} via the limiting decoration process $\calD$. As a result, the Laplace functional of $\widetilde{\calE}_\infty$ in \eqref{tilde-def} is given by 
\begin{equation}\label{lap-tildeE}
\E\left[e^{-\<{f,\widetilde{\calE}_\infty}}\right]=e^{-\Cb_M\int_\R \sqrt{2}\E\left[(1-e^{-\int f(y+z)\mathcal{D}(dz)})\right] e^{-\sqrt{2}y}dy}=e^{-\Cb(f)},
\end{equation}
for any $f$ of form \eqref{f-type}.

\begin{remark}
Note that one may rephrase our main Theorem  \ref{main-thm} as follows: any fixed point $\theta$ of BBM with critical drift is such that its  Laplace functional must satisfy 
\[
\E[e^{-\<{f,\theta}}]=\E[e^{-e^{\sqrt{2}S} \Cb(f)}],
\]
for any $f$ of the form \eqref{f-type} and for some arbitrary real-valued random variable $S$. 
\end{remark}

\subsection{The $\psi$ function from Bramson.}\label{ss.bramson2}
Our proof will rely at a key place (for the proof of Lemma \ref{bd-L}) on a way to control the solutions of the FKPP equation at a large time $t$ from its behaviour at a much earlier time $r\ll t$. This is quantified using the so-called ``$\psi$-function'' from Bramson \cite{bramson83}. See also the work by Chauvin and Rouault \cite{chauvin} which influenced some of the developments below.
\begin{proposition}[Proposition $8.3$ of \cite{bramson83}, Proposition $4.3$ of \cite{ABK13}]\label{bramson}
Let $\varphi(x)$ be a measurable function satisfying the conditions \eqref{cond1},\eqref{cond2} and \eqref{cond3}. And let $u_\varphi$ be the solution of FKPP equation with initial condition $u(0,x)= \varphi(x)$. Define for any $X\in\R$ and $t>r>0$,
\[\psi(r,t,\sqrt{2}t + X) := \frac{e^{-\sqrt{2}X}}{\sqrt{2\pi(t-r)}}\int_0^\infty u_\varphi(r, y+ \sqrt{2}r)e^{\sqrt{2}y}e^{\frac{-(y-X)^2}{2(t-r)}}\bigl\{1- e^{-2y\frac{X+ \frac{3}{2\sqrt{2}}\log(t)}{t-r}}\bigr\}dy.\]
Then for all $r$ large enough (depending only on the initial condition $u(0,\cdot)$), $t\geq 8r$ and $X \geq 8r - \frac{3}{2\sqrt{2}}\log(t)$, 

\begin{equation}\label{crucial}
\gamma_r^{-1}\psi(r,t,\sqrt{2}t + X) \leq u_\varphi(t, X+ \sqrt{2}t) \leq \gamma_r\psi(r,t,\sqrt{2}t + X),
\end{equation}
where $\gamma_r \downarrow 1$ as $r\to \infty$.
\end{proposition}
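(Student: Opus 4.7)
The proof plan is as follows. First, the semilinear variation of constants formula (Duhamel's principle) restarted at time $r$ gives
\begin{equation*}
u_\varphi(t,x) \,=\, e^{\,t-r}\bigl(p_{t-r}\ast u_\varphi(r,\cdot)\bigr)(x) \,-\, \int_r^t e^{\,t-s}\bigl(p_{t-s}\ast u_\varphi(s,\cdot)^2\bigr)(x)\,ds,
\end{equation*}
where $p_s(\cdot)=\tfrac{1}{\sqrt{2\pi s}}e^{-(\cdot)^2/(2s)}$ is the Gaussian heat kernel. Setting $x=\sqrt{2}t+X$ and changing variables by $y:=x'-\sqrt{2}r$ inside the first convolution, a direct completion of the square gives the identity
\begin{equation*}
e^{t-r}\,p_{t-r}\bigl(\sqrt{2}(t-r)+X-y\bigr) \,=\, \tfrac{e^{-\sqrt{2}X}}{\sqrt{2\pi(t-r)}}\,e^{\sqrt{2}y}\,e^{-(y-X)^2/(2(t-r))},
\end{equation*}
which is precisely the kernel that appears in $\psi(r,t,\sqrt{2}t+X)$, once the integration is restricted to $y>0$ and the bridge factor $1-\exp(-2y(X+\tfrac{3}{2\sqrt{2}}\log t)/(t-r))$ is inserted. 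So the naive upper bound obtained by simply dropping the nonlinear integral already has the same shape as $\psi$, but without the barrier correction.

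The second step is to produce the restriction $y>0$ and the bridge factor via a barrier argument. In the moving frame the FKPP front sits at height $-\tfrac{3}{2\sqrt{2}}\log t$ (the Bramson logarithmic correction visible in $m(t)$), so that $u_\varphi(s,\sqrt{2}s+z)$ is small for $z\gg -\tfrac{3}{2\sqrt{2}}\log t$ and close to one below. The strategy is to compare $u_\varphi$ to the solution of the linearised equation $\partial_t v=\tfrac12\partial_{xx}v+v$ with an absorbing boundary along the curve $s\mapsto \sqrt{2}s-\tfrac{3}{2\sqrt{2}}\log t$. Under the Doob $h$-transform $w(s,z):=e^{\sqrt{2}z}v(s,\sqrt{2}s+z)$, a short computation shows that the linearised equation in the moving frame becomes the plain heat equation $\partial_s w=\tfrac12\partial_{zz}w$, and the absorbing curve becomes a constant barrier at height $-\tfrac{3}{2\sqrt{2}}\log t$ for a standard Brownian motion. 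The reflection principle then identifies the probability that a Brownian bridge from $y$ at time $r$ to $X$ at time $t$ stays above $-\tfrac{3}{2\sqrt{2}}\log t$ as $1-\exp(-2y(X+\tfrac{3}{2\sqrt{2}}\log t)/(t-r))$, which is exactly the bridge factor appearing in $\psi$. The upper bound $u_\varphi(t,\sqrt{2}t+X)\le\gamma_r\psi(r,t,\sqrt{2}t+X)$ then follows from the parabolic comparison principle, the barrier serving both to justify the restriction to $y>0$ and to absorb the nonlinear loss on the forbidden side.

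For the matching lower bound one has to show that the nonlinear loss $\int_r^t u_\varphi^2$ plus the mass carried by paths that cross the barrier cost only a factor $\gamma_r=1+o_r(1)$. This is the main obstacle, and it is Bramson's central technical innovation: it requires sharp a priori upper bounds $u_\varphi(s,\sqrt{2}s+z)\lesssim e^{-\sqrt{2}z}$ uniformly in $s\in[r,t]$ and $z$ above the barrier, obtained by an iterative bootstrap from Theorem~\ref{cvgFKPP} and the right tail $\omega_M(x)\sim\Cb_M x e^{-\sqrt{2}x}$, together with a Brownian bridge computation showing that the excluded (barrier-crossing) paths contribute only $o(\psi)$. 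The hypotheses $t\ge 8r$ and $X\ge 8r-\tfrac{3}{2\sqrt{2}}\log t$ are precisely the quantitative regime in which the Gaussian factor $e^{-(y-X)^2/(2(t-r))}$ concentrates on the spacetime region where the linear-plus-barrier approximation is valid, and they are what allow one to show $\gamma_r\downarrow 1$ as $r\to\infty$.
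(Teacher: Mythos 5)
The paper does not actually prove this proposition: it is imported verbatim from Bramson's memoir (Proposition 8.3 of \cite{bramson83}, restated as Proposition 4.3 of \cite{ABK13}), so there is no internal argument to compare yours against. What you have written is a (partly correct) summary of Bramson's strategy --- Duhamel/Feynman--Kac restarted at time $r$, the completion-of-the-square identity for the kernel (which you state correctly), and a barrier/reflection argument producing the bridge factor --- rather than a proof; all of the quantitative work that makes \eqref{crucial} true with $\gamma_r\downarrow 1$ is only named, not carried out.

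Two concrete problems with the sketch as written. First, your barrier is inconsistent with the factor you claim it produces: after your (correct) $h$-transform, a Brownian bridge from $y$ at time $r$ to $X$ at time $t$ staying above the \emph{constant} level $-c$ with $c=\tfrac{3}{2\sqrt{2}}\log t$ has probability $1-\exp\bigl(-2(y+c)(X+c)/(t-r)\bigr)$, not $1-\exp\bigl(-2y(X+c)/(t-r)\bigr)$; the factor appearing in $\psi$ corresponds to the \emph{sloped} barrier joining $\sqrt{2}r$ at time $r$ to $m(t)=\sqrt{2}t-\tfrac{3}{2\sqrt{2}}\log t$ at time $t$ (Bramson's broken-line $M_{r,t}$), i.e.\ a line decreasing from $0$ to $-c$ in the moving frame. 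Second, neither inequality in \eqref{crucial} follows from ``dropping the nonlinear term'' plus ``the parabolic comparison principle'': discarding $\int_r^t e^{t-s}p_{t-s}\ast u^2\,ds$ gives an upper bound \emph{without} the restriction $y>0$ and without the bridge factor, and inserting these makes the right-hand side smaller, so the claimed upper bound $u\le\gamma_r\psi$ genuinely requires showing that paths which cross the barrier (where $u$ is close to $1$) contribute only a $\gamma_r$-factor, via sharp a priori bounds on $1-u$ behind the front and on $u(s,\sqrt{2}s+z)$ ahead of it, uniformly on $[r,t]$; the matching lower bound needs the same estimates. Moreover, obtaining those a priori bounds ``by bootstrap from Theorem \ref{cvgFKPP} and the tail $\omega_M(x)\sim \Cb_M x e^{-\sqrt{2}x}$'' is circular in Bramson's development, since the convergence to the travelling wave and the tail asymptotics are themselves consequences of this proposition; Bramson derives the needed inputs independently (Section 7 of \cite{bramson83}) before proving Proposition 8.3. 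So the proposal identifies the right classical route but, as it stands, has a wrong reflection computation and leaves the essential estimates unproved.
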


\ni
We end this long list of prerequisites with the following two statements on tail estimates describing  essentially what happens away from the window of {\em good} points $[-\sqrt{t}/\delta, -\delta \sqrt{t}]$.

\begin{lemma}[Lemma $4.6$ of \cite{ABK13}]\label{y-order}
Let $\varphi(x)$ be a measurable function satisfying the conditions \eqref{cond1},\eqref{cond2} and \eqref{cond3}. And let $u_\varphi$ be the solution of FKPP equation with initial condition $u(0,x)= \varphi(x)$. Then, for any $x\in\R$,
\[\lim_{A_1 \downarrow 0}\limsup_{r\to \infty}\int_0^{A_1\sqrt{r}}u_\varphi(r, x+y+ \sqrt{2}r)ye^{\sqrt{2}y}dy =0,\]
and 
\[\lim_{A_2 \uparrow \infty}\limsup_{r\to \infty}\int_{A_2\sqrt{r}}^{\infty}u_\varphi(r, x+y+ \sqrt{2}r)ye^{\sqrt{2}y}dy =0.\]
\end{lemma}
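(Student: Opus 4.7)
The plan is to derive both limits by comparing the full integral $\int_0^\infty u_\varphi(r,\cdot)\,y\,e^{\sqrt{2}y}\,dy$ with the ``good window'' contribution $\int_{\delta\sqrt r}^{\sqrt r/\delta} u_\varphi(r,\cdot)\,y\,e^{\sqrt{2}y}\,dy$; both have explicit limits coming from results already available in the preliminaries. First, writing $\tilde\varphi(z):=\varphi(z+x)$ one has $u_\varphi(r,x+y+\sqrt{2}r)=u_{\tilde\varphi}(r,y+\sqrt{2}r)$ and $\tilde\varphi$ still obeys \eqref{cond1}--\eqref{cond3}, so it suffices to treat the case $x=0$.

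By the identity \eqref{constant-f} (valid for every measurable $\varphi$ obeying (i)-(iii); see Corollary 6.51 of \cite{bovier-book}),
\[
I_r\;:=\;\int_0^\infty u_\varphi(r,y+\sqrt{2}r)\,y\,e^{\sqrt{2}y}\,dy\;\xrightarrow[r\to\infty]{}\;\sqrt{\tfrac{\pi}{2}}\,C_\varphi\;=\;C_\varphi\int_0^\infty s^2 e^{-s^2/2}\,ds.
\]
On the other hand, from Lemma \ref{decoration-lemma} (again read for general $\varphi$), uniformly for $y\in[\delta\sqrt r,\sqrt r/\delta]$ one has $u_\varphi(r,\sqrt{2}r+y) = (1+o_r(1))\,C_\varphi\,y\,e^{-\sqrt{2}y-y^2/(2r)}/r^{3/2}$. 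Inserting this in the integrand and performing the substitution $y=s\sqrt r$ yields, for any fixed $\delta\in(0,1)$,
\[
J_r(\delta)\;:=\;\int_{\delta\sqrt r}^{\sqrt r/\delta} u_\varphi(r,y+\sqrt{2}r)\,y\,e^{\sqrt{2}y}\,dy\;\xrightarrow[r\to\infty]{}\;C_\varphi\int_\delta^{1/\delta} s^2 e^{-s^2/2}\,ds.
\]

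To conclude, fix $A_1\in(0,1)$ and apply the above with $\delta=A_1$. Since the integrand is non-negative,
\[
\int_0^{A_1\sqrt r} u_\varphi(r,y+\sqrt{2}r)\,y\,e^{\sqrt{2}y}\,dy\;\le\;I_r-J_r(A_1),
\]
and taking $\limsup_{r\to\infty}$ yields an upper bound of $C_\varphi\bigl(\int_0^{A_1}+\int_{1/A_1}^\infty\bigr)s^2 e^{-s^2/2}\,ds$, which vanishes as $A_1\downarrow 0$. Repeating with $\delta=1/A_2$ for $A_2>1$ gives the second limit.

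The delicate point, and the one deserving most care, is that as literally stated in the excerpt both \eqref{constant-f} and Lemma \ref{decoration-lemma} concern $\varphi$ of the specific form $1-e^{-f(-\cdot)}$ with $f$ as in \eqref{f-type}. For arbitrary $\varphi$ satisfying (i)-(iii) the same results hold, but one needs their extensions via Bramson's $\psi$-function analysis (Proposition \ref{bramson}) together with monotonicity of FKPP solutions in the initial data, which allow one to sandwich a general $\varphi$ between two datums of the form $1-e^{-f}$. Once these extensions are granted, the remainder of the proof is the elementary bookkeeping above with no further PDE work required.
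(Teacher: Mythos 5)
The paper offers no proof of this lemma — it is imported verbatim as Lemma 4.6 of \cite{ABK13} — so the only question is whether your derivation from the other quoted facts is sound. The bookkeeping core of your argument (total integral $I_r\to\sqrt{\pi/2}\,C_\varphi$ by \eqref{constant-f}, window integral $J_r(\delta)\to C_\varphi\int_\delta^{1/\delta}s^2e^{-s^2/2}ds$ by Lemma \ref{decoration-lemma}, then subtract) is arithmetically correct, and the translation reduction to $x=0$ is fine. The genuine gap is exactly the step you flag and then dispose of too quickly: both inputs are stated in the paper only for initial data of the special form $\varphi=1-e^{-f(-\cdot)}$ with $f$ as in \eqref{f-type}, whereas the lemma concerns an arbitrary measurable $\varphi$ satisfying (i)--(iii), and your proposed bridge by ``sandwiching'' fails. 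Any datum of that special form is bounded above by $1-e^{-\sum_k c_k}<1$, so no such datum dominates a $\varphi$ with $\sup\varphi=1$ — in particular the Heaviside datum $\varphi=\ind{x\le 0}$, i.e.\ $u_M$, which is precisely the case in which this paper later applies Lemma \ref{y-order} (in the proof of Lemma \ref{small-term}). A mismatched sandwich ($\varphi_-\le\varphi\le\varphi_+$ with different constants $C_{\varphi_\pm}$) does not rescue the argument either, because your cancellation $I_r-J_r(\delta)\to C_\varphi\bigl(\int_0^\delta+\int_{1/\delta}^\infty\bigr)s^2e^{-s^2/2}ds$ requires the \emph{same} constant in both limits. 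So as written the proof does not cover the generality of the statement; to make it work you must invoke the Bramson/\cite{ABK13} versions of these asymptotics, which are proved there for all initial data satisfying (i)--(iii) — but those results are at least as deep as the lemma itself and come from the same source, so little is gained.

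Note also that a far more elementary argument is available and is closer in spirit to what \cite{ABK13} actually do: by \eqref{cond3} one has $\varphi\le\ind{\cdot\le b}$ for some $b$, and monotonicity of FKPP in the initial datum gives $u_\varphi(r,z)\le\P(M_r\ge z-b)$; inserting the upper bound of Lemma \ref{sharp-bound} (applicable since $y+x-b\ge-\tfrac12\log r$ for $y\ge0$ and $r$ large) and substituting $y=s\sqrt r$ yields $\limsup_r\int_0^{A_1\sqrt r}\le c\,A_1^3$ and $\limsup_r\int_{A_2\sqrt r}^{\infty}\le c\int_{A_2}^\infty s^2e^{-s^2/2}\,ds$, which give both limits directly without any appeal to the convergence of the constants or the window asymptotics.
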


\begin{lemma}[Lemma $4.7$ of \cite{ABK13}, Theorem $4.1$ of \cite{M-15}]\label{sharp-bound}
There exist a constant $c_1>0$ such that for each $X\geq -\frac{1}{2}\log(t) $ and $t$ large enough, \[\mathbb{P}(M_t \geq \sqrt{2}t +  X) \leq c_1 (X + \log(t))t^{-\frac{3}{2}}e^{-\sqrt{2}X - \frac{X^2}{2t}}.\]
Moreover, there exists constant $c_2>0$ such that for all $t$ large enough and $-\frac{1}{2}\log(t)\leq X\leq \sqrt{t}$, 
\[
\P(M_t\geq \sqrt{2}t+X) \geq c_2(X+\log t) t^{-3/2}e^{-\sqrt{2}X-\frac{X^2}{2t}}.
\]
\end{lemma}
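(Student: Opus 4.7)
Both bounds are derived from the $\psi$-function representation (Proposition \ref{bramson}) applied to the initial condition $\varphi(x)=\mathbf{1}_{x\leq 0}$, which satisfies the hypotheses \eqref{cond1}--\eqref{cond3} and for which $u_\varphi(t,x)=\P(M_t\geq x)=u_M(t,x)$. The plan is to fix once and for all a large $r$ so that $\gamma_r\leq 2$, which is allowed as soon as $t\geq 8r$ (which is the case for $t$ large enough), and then to carry out elementary analytic estimates on the explicit integral defining $\psi(r,t,\sqrt{2}t+X)$.

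For the upper bound, I would apply \eqref{crucial} to get $\P(M_t\geq \sqrt{2}t+X)\leq 2\psi(r,t,\sqrt{2}t+X)$. Then the inequality $1-e^{-u}\leq u$ (valid for $u\geq 0$) replaces the bracket $\{1-e^{-2y(X+\frac{3}{2\sqrt{2}}\log t)/(t-r)}\}$ by $2y(X+\frac{3}{2\sqrt{2}}\log t)/(t-r)$; note that $X+\tfrac{3}{2\sqrt{2}}\log t\geq 0$ in the regime $X\geq -\tfrac{1}{2}\log t$ for $t$ large. Completing the square
\[
-\frac{(y-X)^2}{2(t-r)}=-\frac{X^2}{2(t-r)}+\frac{Xy}{t-r}-\frac{y^2}{2(t-r)},
\]
and using $\frac{1}{t-r}\geq \frac{1}{t}$, one extracts a factor $e^{-\sqrt{2}X-X^2/(2t)}$ in front. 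It then remains to bound the residual integral
\[
I(r,t,X):=\int_0^\infty u_M(r,y+\sqrt{2}r)\, y\, e^{\sqrt{2}y}\, e^{Xy/(t-r)-y^2/(2(t-r))}\,dy
\]
by a constant depending only on $r$. I would split according to whether $X\leq\sqrt{t}$ or $X>\sqrt{t}$: in the first case $Xy/(t-r)\leq y/\sqrt{t-r}$ which is absorbed by the Gaussian term; in the second case the Gaussian factor $e^{-y^2/(2(t-r))}$ is strong enough to compensate the exponential factor $e^{Xy/(t-r)}$ for $y$ at the relevant scale. The tail bound for $u_M(r,y+\sqrt{2}r)$ (which decays like $y e^{-\sqrt{2}y}$ times polynomial corrections for $y$ large, and is bounded by $1$ otherwise) makes the integrand $L^1$ and yields a finite constant depending only on the fixed $r$.

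For the lower bound, use the left inequality in \eqref{crucial} to get $\P(M_t\geq \sqrt{2}t+X)\geq \tfrac{1}{2}\psi(r,t,\sqrt{2}t+X)$. Here the range $-\tfrac{1}{2}\log t\leq X\leq \sqrt{t}$ is essential: for $y$ of order $\sqrt{r}$, which dominates the integral by Lemma \ref{y-order}, the exponent $2y(X+\tfrac{3}{2\sqrt{2}}\log t)/(t-r)$ is bounded (it goes like $\sqrt{r}/\sqrt{t}$), so one may use the reverse inequality $1-e^{-u}\geq c\, u$ uniformly for $u$ in this bounded range. The same completion-of-square as above, now with the control $\frac{X^2}{2(t-r)}\leq \frac{X^2}{2t}(1+O(r/t))$, produces the matching lower bound $c_2(X+\log t)t^{-3/2}e^{-\sqrt{2}X-X^2/(2t)}$ up to adjusting the constant.

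The main obstacle, as always with these sharp two-sided estimates, is not the idea but keeping track of the multiplicative errors generated by replacing $t-r$ by $t$ and by the various truncations. Since a clean proof of the lower bound under the precise hypothesis $-\tfrac{1}{2}\log t\leq X\leq \sqrt{t}$ is already written out in Theorem 4.1 of \cite{M-15} (and the upper bound is Lemma 4.7 of \cite{ABK13}), one can safely invoke these references.
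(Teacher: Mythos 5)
You should first note that the paper does not actually prove this lemma: it is quoted, with the upper bound taken from Lemma 4.7 of \cite{ABK13} (or Corollary 10 of \cite{ABK12}) and the lower bound from Theorem 4.1 of \cite{M-15} (proved there for branching random walks in discrete time, with the remark that transferring it to BBM is easy). So your final fallback --- simply invoking these references --- is exactly what the paper does. Your self-contained sketch via Bramson's $\psi$-function is a genuinely different and more informative route (it essentially re-derives the ABK estimate inside the paper's own toolbox), and the lower-bound half is sound: on the range $-\tfrac12\log t\le X\le\sqrt t$, the mass of the $y$-integral sits at $y\asymp\sqrt r$ by Lemma \ref{y-order} and \eqref{constant-f}, where the argument of the bracket is $o_t(1)$, so $1-e^{-u}\ge cu$ applies and the replacements of $t-r$ by $t$ only cost constants; this even has the added value of giving a purely continuous-time proof of the lower bound, avoiding the discrete-to-continuous transfer the paper waves at. (Minor point: Proposition \ref{bramson} requires $X\ge 8r-\tfrac{3}{2\sqrt2}\log t$, which covers all $X\ge-\tfrac12\log t$ only once $t\ge e^{cr}$; fine since $r$ is fixed, but it should be said.)

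The genuine gap is in your upper bound for very large $X$. The lemma asserts the upper bound for \emph{every} $X\ge-\tfrac12\log t$, with no upper cutoff, and the paper really uses it with unbounded $X$ (e.g.\ in the proof of \eqref{small-term-psi}, where $X=y$ ranges over all of $\R_+$ with $t$ replaced by $r=\sqrt t$). After linearizing the bracket by $1-e^{-u}\le u$ and extracting $e^{-\sqrt2 X-X^2/(2t)}$, your residual integral carries the cross factor $e^{Xy/(t-r)}$; at the dominant scale $y\asymp\sqrt r$ this equals $e^{X\sqrt r/t}$, while the Gaussian factor $e^{-y^2/(2(t-r))}$ is of order $1$ there, so it does not ``compensate'' anything once $X\gg t$: optimizing the full exponent (using the Gaussian decay $u_M(r,y+\sqrt2 r)\le e^{-\sqrt2 y-y^2/(2r)}$, which is what your integrability really relies on, not a $ye^{-\sqrt2 y}$ decay) leaves an uncontrolled extra factor of order $e^{rX^2/(2t(t-r))}$, harmless only for $X\lesssim t$. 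Hence your split at $X=\sqrt t$, with the stated justification, does not cover the regime $X\gg t$. The fix is simple: for $X\ge Ct$ do not linearize at all --- bound the bracket by $1$, so the $y$-integral is a Gaussian integral whose exponent is maximized (at $y=Xr/t$) with value $-X^2/(2t)$, giving $\psi(r,t,\sqrt2 t+X)\le C_r\,t^{-1/2}e^{-\sqrt2 X-X^2/(2t)}\le c_1(X+\log t)t^{-3/2}e^{-\sqrt2 X-X^2/(2t)}$ in that regime (equivalently, the refined first-moment bound $e^t\,\P(W_t\ge\sqrt2 t+X)\le C\tfrac{\sqrt t}{\sqrt2 t+X}e^{-\sqrt2 X-X^2/(2t)}$ already suffices there). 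With that split at $X\asymp t$ instead of $\sqrt t$, your argument closes; as written, it does not.
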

In fact, Lemma $4.7$ of \cite{ABK13} (or Corollary 10 of \cite{ABK12}) gives the upper bound for the first estimate  while Theorem $4.1$ of \cite{M-15} gives the lower bound for branching random walks in discrete time. However, it is easy to generate this estimate for BBM.

\section{Uniqueness of fixed points} \label{Uniqueness}

This section is devoted to proving the uniqueness part in Theorem \ref{main-thm}. As mentioned in Section \ref{facts-BBM}, we are going to show that if $\theta$ is a fixed point in the space $\M$ (meaning if $\theta\sim \pi$ an invariant probability measure on $\M\subset (\calN,d)$, see Definition \ref{d.FP}), then for any  
 $f$ of form \eqref{f-type} we have
\[
\E[e^{-\<{f,\theta}}]=\E[e^{-Z_\infty^\theta \Cb(f)}],
\]
where $Z_\infty^\theta$ is a positive random variable. In fact, the random shift $S$ stated in Theorem \ref{main-thm} corresponds to  $S=\frac{1}{\sqrt{2}}\log Z_\infty^\theta$.

\begin{remark}
$Z^\theta_\infty$ should not be confused with the derivative martingale $Z_\infty$. We use the same symbol because they play exactly the same role. However, as we will see later, for any positive random variable $Z$, one can construct a fixed point $\theta$ such that that $Z^\theta_\infty = Z$.
\end{remark}

\subsection{Laplace transform at a later time $t>0$.}

If $\theta_0=\theta=\sum_{i\in I}\delta_{x_i}$ is a fixed point a.s. in $\M$, by \eqref{fixedpoint}, one sees that for any $f$ of form \eqref{f-type}, and for any $t>0$,
\begin{align*}
\E[e^{-\<{f, \theta}}]=&\E[e^{-\<{f,\theta_t}}]\\
=&\E\left[e^{-\sum_{i\in I}\sum_{k=1}^{n^i(t)}f(x_i+\chi_k^i(t)-\sqrt{2}t)}\right]
\end{align*}
which by independence is $\E[\prod_{i\in I}(1-u_\varphi(t,\sqrt{2}t-x_i))]$ where $u_\varphi$ is the FKPP solution with initial condition $u(0,x)=\varphi(x)=1-e^{-f(-x)}$. For convenience, let us view $\theta$ as a random measure. Then,
\begin{equation}
\E[e^{-\<{f, \theta_t}}]=\E\left[e^{-\int_{\R}-\log(1-u_\varphi(t,\sqrt{2}t-x))\theta(dx)}\right].
\end{equation}
As we assume that $\theta\in\M$ a.s. we have $\P(\esssup\theta>A)=o_A(1)$ as $A\to\infty$. As a consequence,
\begin{align*}
\E[e^{-\langle f,\theta_t \rangle}]=&\E\left[\prod_{x\in\theta}(1-u(t,\sqrt{2}t-x))\ind{\esssup\theta\leq A}\right]+\underbrace{\E\left[\prod_{x\in\theta}(1-u(t,\sqrt{2}t-x))\ind{\esssup\theta>A}\right]}_{\leq\P(\esssup\theta>A)}\\
=&\E\left[\exp\left\{\int_{-\infty}^A\log(1-u(t,\sqrt{2}t-x))\theta(dx)\right\}\ind{\esssup\theta\leq A}\right]+o_A(1)\\
=&\E\left[\exp\left\{\int_{-\infty}^A\log(1-u(t,\sqrt{2}t-x))\theta(dx)\right\}\right]+o_A(1).
\end{align*}
Next, we show that $u_\varphi(t,\sqrt{2}t-x)=o_t(1)$ uniformly for $x\le A$. In fact, by considering the maximal position $M_t$ of BBM and taking $K_f\ge0$ so that $\textrm{supp}(f)\subset [-K_f,\infty)$,
\begin{align}
u_\varphi(t,\sqrt{2}t-x)=&1-\E\left[e^{-\sum_{k=1}^{n(t)}f(x+\chi_k(t)-\sqrt{2}t)}\right]\nonumber\\
=&\E\left[(1-e^{-\sum_{k\leq n(t)}f(x+\chi_k(t)-\sqrt{2}t)})\ind{x+M_t-\sqrt{2}t\geq-K_f}\right]\nonumber\\
\leq & \P(x+M_t-\sqrt{2}t\geq -K_f).\label{roughbd-u}
\end{align}
Therefore, for any $A>0$,
\[
\sup_{x\le A}u_\varphi(t,\sqrt{2}t-x)\leq \P(M_t\geq \sqrt{2} t-K_f-A)=o_t(1),
\]
as $M_t-m(t)$ converges in law. This ensures that uniformly for $x\leq A$,
\[
\log(1-u_\varphi(t,\sqrt{2}t-x))=-(1+o_t(1))u_\varphi(t,\sqrt{2}t-x).
\]
Consequently,
\begin{equation}\label{before-split}
\E[e^{-\langle f,\theta_t \rangle}]=\E\left[\exp\left\{-(1+o_t(1))\int_{-\infty}^Au_\varphi(t,\sqrt{2}t-x)\theta(dx)\right\}\right]+o_A(1).
\end{equation}
Define
\begin{align*}
\Theta_t(f):=\int_{-\infty}^Au_\varphi(t,\sqrt{2}t-x)\theta(dx).
\end{align*}
\subsection{Dividing initial points into good and bad points.} 
For any $\delta\in(0,1)$ and $t$ sufficiently large, we split $\Theta_t(f)$ into three parts as follows:
\[
\Theta_t(f)= \int_{-\sqrt{t}/\delta}^{-\sqrt{t}\delta}u_\varphi(t,\sqrt{2}t-x)\theta(dx)+ L(t,\delta) + R(t,\delta),
\]
where we introduce the {\em Left} part:
\[L(t,\delta):= \int_{-\infty}^{-\sqrt{t}/\delta}u_\varphi(t,\sqrt{2}t-x)\theta(dx),\]
and the {\em Right} part:
\[R(t,\delta):= \int_{-\delta \sqrt{t}}^Au_\varphi(t,\sqrt{2}t-x)\theta(dx).\]
For {\em good} points $x\in[-\frac{1}{\delta}\sqrt{t},-\delta\sqrt{t}]$, we have
\begin{align*}
u_\varphi(t,\sqrt{2}t-x)=&\E\left[(1-e^{-\sum_{k=1}^{n(t)}f(x+\chi_k(t)-\sqrt{2}t)})\ind{x+M_t-\sqrt{2}t\geq-K_f}\right]\\
=&\E\left[(1-e^{-\int f(x+\overline{M}_t+z)\mathcal{D}_t(dz)})\Big\vert x+\overline{M}_t\geq -K_f \right]\P(x+M_t\geq \sqrt{2}t-K_f),
\end{align*}
where $\mathcal{D}_t=\sum_{1\le k\leq n(t)}\delta_{\chi_k(t)-M_t}$, $\overline{M}_t=M_t-\sqrt{2}t$. Using Lemma \ref{new-lemma} and \eqref{conditionedcvg}, we obtain that uniformly for $x\in[-\frac{1}{\delta}\sqrt{t},-\delta\sqrt{t}]$, one has
\begin{equation*}
u_\varphi(t,\sqrt{2}t-x)=(1+o_t(1))e^{-\sqrt{2}K_f}\frac{\Cb(f)}{\Cb_M}\P(x+M_t\geq \sqrt{2}t-K_f)
\end{equation*}
As a result, \eqref{before-split} becomes 
\begin{equation}\label{Theta-eqn}
    \E[e^{-\langle f,\theta_t \rangle}]=\E[\exp(-(1+o_t(1))\Theta_t(f))]+o_A(1),
\end{equation}
where
\begin{align*}
\Theta_t(f) = &(1+o_t(1))\frac{\Cb(f)}{\Cb_M}\int_{-\frac{1}{\delta}\sqrt{t}}^{ -\delta\sqrt{t}}e^{-\sqrt{2}K_f}\P(x+M_t\geq \sqrt{2}t-K_f) \theta(dx)+L(t,\delta)+R(t,\delta).
\end{align*}
Define
\[Z_t^\theta(K_f,\delta) := \frac{1}{\Cb_M}\int_{-\frac{1}{\delta}\sqrt{t}}^{ -\delta\sqrt{t}}e^{-\sqrt{2}K_f}\P(x+M_t\geq \sqrt{2}t-K_f)\theta(dx)\]
One then gets
\[
\Theta_t(f)=(1+o_t(1)) Z_t^\theta(K_f,\delta)\Cb(f)+L(t,\delta)+R(t,\delta).
\]
On the one hand, observe that by \eqref{roughbd-u}, 
\begin{align*}
0\leq L(t,\delta)\leq &\int_{-\infty}^{-\sqrt{t}/\delta}\P(x+M_t\geq \sqrt{2}t-K_f)\theta(dx)=: L^+(t,\delta),\\ 
0\leq R(t,\delta)\leq &\int_{-\delta\sqrt{t}}^A\P(x+M_t\geq \sqrt{2}t-K_f)\theta(dx)=: R^+(t,\delta).
\end{align*}
On the other hand, recall that $\P(x+M_t\geq \sqrt{2}t-K_f)=u_M(t,\sqrt{2}t-x-K_f)$. By Lemma \ref{decoration-lemma},  uniformly for $x\in [-\frac{1}{\delta}\sqrt{t},-\delta\sqrt{t}]$, 
\[
\P(x+M_t\geq \sqrt{2}t-K_f)= (1+o_t(1))\Cb_M t^{-3/2}|x|e^{\sqrt{2}(x+K_f)-\frac{x^2}{2t}}.
\]
So,
\begin{align*}
Z_t^\theta(K_f,\delta)
=&(1+o_t(1))\int_{-\frac{1}{\delta}\sqrt{t}}^{ -\delta\sqrt{t}}t^{-3/2}|x|e^{\sqrt{2}x-\frac{x^2}{2t}}\theta(dx)\\
=&\frac{1}{\Cb_M}(1+o_t(1))\int_{-\frac{1}{\delta}\sqrt{t}}^{ -\delta\sqrt{t}}\P(x+M_t\geq \sqrt{2}t)\theta(dx)=(1+o_t(1))Z_t^\theta(0,\delta).
\end{align*}
The above identity is important as it shows that the {\em middle} part $Z_t^\theta(K_f,\delta)$ in fact does not depend much on the width of the support of $f$, i.e. $K_f$. This leads us to introduce the following quantity which does not depend neither on $K_f$ nor on $\delta$.
Define 
\begin{equation}
Z_t^\theta:=\frac{1}{\Cb_M}\int_{-\infty}^0 \P(x+M_t\geq \sqrt{2}t)\theta(dx).
\end{equation}
We then rewrite $\Theta_t(f)$ as follows:
\begin{equation}
\Theta_t(f)=(1+o_t(1))\Cb(f) Z_t^\theta + L_1(t,\delta)+R_1(t,\delta)
\end{equation}
where 
\[
L_1(t,\delta)=L(t,\delta)-(1+o_t(1))\frac{\Cb(f)}{\Cb_M}\int_{-\infty}^{-\sqrt{t}/\delta}\P(x+M_t\ge\sqrt{2}t)\theta(dx),
\]
and 
\[
R_1(t,\delta)=R(t,\delta)-(1+o_t(1))\frac{\Cb(f)}{\Cb_M}\int_{-\sqrt{t}\delta}^0\P(x+M_t\ge\sqrt{2}t)\theta(dx).
\]
Observe immediately that there exists some constant $c_3>1$ such that for sufficiently large $t$,
\[
|L_1(t,\delta)|\leq c_3 L^+(t,\delta)\textrm{ and } |R_1(t,\delta)|\le c_3 R^+(t,\delta).
\]
Our uniqueness of fixed points (Theorem \ref{main-thm}), as we shall explain in the next Section follows from the following key Lemma.
\begin{lemma}\label{l.key}
The random variables $Z_t^\theta, R^+(t,\delta)$ and $L^+(t,\delta)$ which are each measurable w.r.t to the initial point process $\theta$ satisfy 
\begin{enumerate}
\item $(Z_t^\theta)_{t>0}$ is tight;
\item $R^+(t,\delta)$ converges in probability to 0 as $t\to\infty$ and then $\delta\to0$;
\item $L^+(t,\delta)$ converges in probability to 0 as $t\to\infty$ and then $\delta\to0$.
\end{enumerate}
\end{lemma}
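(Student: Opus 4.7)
The common underlying idea for all three assertions is that each quantity $p_x := \P(x + M_t \ge \sqrt{2}t - K)$ appearing in $Z_t^\theta, L^+, R^+$ is the parameter of a Bernoulli random variable indicating whether at least one descendant of the ancestor $x$ lies in a fixed window of $\theta_t$. Summing such Bernoullis (which are conditionally independent given $\theta$) produces an integer-valued count that is bounded above by the number of descendants of $\theta_t$ in that window, which by the fixed-point hypothesis has the same law as the $\theta$-count in that window. For Part~(1), apply this observation with $K=0$: writing
\[
Z_t^\theta\,\Cb_M = \Eb{N_t \mid \theta}, \qquad N_t := \sum_{i : x_i \le 0} \mathbf{1}_{\{x_i + M_t^{(i)} \ge \sqrt{2}t\}} \le \theta_t([0,\infty)),
\]
the fixed-point identity gives $\theta_t([0,\infty)) \overset{d}= \theta([0,\infty)) < \infty$ a.s., hence tight in $t$. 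If $(Z_t^\theta)$ were not tight, then along a subsequence $t_k$ one would have $\Eb{N_{t_k} \mid \theta}\to\infty$ on an event of positive probability; Chebyshev applied to the conditional Bernoulli sum (using $\Var{N_t \mid \theta} \le \Eb{N_t \mid \theta}$) would force $N_{t_k}\to\infty$ there, contradicting the tightness of $\theta_{t_k}([0,\infty))$.

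\textbf{Part (2): the Right term.} We argue by contradiction and assume $R^+(t_k,\delta) \ge \epsilon$ with probability $\ge \epsilon$ along some $t_k \to \infty$. Let $s_k := \delta^2 t_k \ll t_k$. The heart of the step is the uniform estimate announced in the sketch as Lemma~\ref{ratio-R}, which provides, for $x\in[-\delta\sqrt{t_k},A]$, a bound
\[
\P(x + M_{t_k} \ge \sqrt{2}t_k - K_f) \le C \cdot \P(x + M_{s_k} \ge \sqrt{2}s_k - K')
\]
for appropriate constants $C, K'$ depending only on $\delta, K_f, A$. Integrating against $\theta$ yields
\[
R^+(t_k,\delta) \le C \int_{-\delta\sqrt{t_k}}^A \P(x + M_{s_k} \ge \sqrt{2}s_k - K')\, \theta(dx),
\]
and the right-hand side is controlled by the Bernoulli argument of Part~(1) now applied at time $s_k$: it yields a bound by a quantity dominated by $\theta_{s_k}([-K',A'])$, hence by the fixed-point property by a copy of $\theta([-K',A'])<\infty$ a.s. Careful tracking of the constants, combined with Lemmas~\ref{y-order} and~\ref{sharp-bound}, shows that this upper bound vanishes as first $t\to\infty$ and then $\delta\to0$, contradicting the assumption.

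\textbf{Part (3): the Left term.} Once again we argue by contradiction, assuming $L^+(t_k,\delta) \ge \epsilon$ with positive probability along some $t_k \to \infty$, but now we pass to the \emph{later} time $s_k := 2 t_k$. For atoms $x \le -\sqrt{t_k}/\delta$ no uniform ratio bound such as Lemma~\ref{ratio-R} is available; instead, via the announced Lemma~\ref{bd-L}, we use Bramson's $\psi$-function, Proposition~\ref{bramson}, applied to the FKPP initial datum $\varphi(y) = \mathbf{1}_{\{y \le -K_f\}}$ (so that $u_\varphi(t, \cdot) = \P(M_t \ge \cdot - K_f)$). Choosing a large fixed $r$, the two-sided bounds of Proposition~\ref{bramson} at times $t_k$ and $2t_k$, together with a Gaussian change of variable in the integral defining $\psi$, yield for $x \le -\sqrt{t_k}/\delta$ an inequality of the form
\[
\P(x + M_{t_k} \ge \sqrt{2} t_k - K_f) \le C_r \cdot \P(x + M_{2 t_k} \ge 2\sqrt{2} t_k - K'')
\]
for a suitable $K''$. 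Summing against $\theta$ and invoking once more the Bernoulli argument of Part~(1), now at time $s_k = 2 t_k$, bounds $L^+(t_k,\delta)$ by $C_r$ times a quantity dominated by $\theta_{2 t_k}([-K'',K'']) \overset{d}= \theta([-K'',K''])$, which is tight. Letting first $t_k\to\infty$, then $r\to\infty$ (so that $\gamma_r\to 1$), and finally $\delta\to 0$ produces the contradiction. The main obstacle is precisely this step: extracting a uniformly sharp two-time comparison from Bramson's $\psi$-function over the non-compact range $x\le-\sqrt{t_k}/\delta$ requires identifying the dominant region of the defining Gaussian integral and carefully transporting constants through Proposition~\ref{bramson}. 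Once this comparison is in place, Parts~(1) and~(2) reduce to relatively standard conditional Bernoulli concentration arguments pinned down by the fixed-point identity $\theta_t\overset{d}=\theta$.
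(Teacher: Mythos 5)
Your Parts (1) and (2) follow essentially the paper's own route and are sound: the conditional Bernoulli count dominated by a window count of $\theta_t$, Chebyshev via Lemma \ref{concentration}, and the fixed-point identity give the tightness statement, and for the right term the whole point is the $\delta^{-3}$ gain of Lemma \ref{ratio-R} at the earlier time $s=\delta^2 t$ (your constant $C$ must be read as $c_4^{-1}\delta^{3}\to 0$; with a $\delta$-independent constant the conclusion ``vanishes as $\delta\to0$'' would not follow). A cosmetic remark: the dominating counts should be $\theta_{s}([-K',\infty))$ and $\theta_{2t}([-K'',\infty))$ rather than bounded windows, which is harmless since $\theta\in\M$.

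Part (3), however, has a genuine gap. You state a comparison $\P(x+M_{t}\ge\sqrt{2}t-K_f)\le C_r\,\P(x+M_{2t}\ge 2\sqrt{2}t-K'')$ uniformly over $x\le-\sqrt{t}/\delta$ with a constant $C_r$ depending only on $r$, and then propose to conclude by letting $t\to\infty$, $r\to\infty$, and finally $\delta\to0$. With a $\delta$-independent constant this argument cannot work: summing against $\theta$ and invoking the Bernoulli/fixed-point control of $L^+_{2t}(t,\delta)$ only shows that $L^+(t,\delta)$ is \emph{tight}, not that it tends to $0$ in probability as $\delta\to0$; no contradiction with $\P(L^+(t,\delta)\ge\eta)\ge\varepsilon$ is produced. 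The paper's Lemma \ref{bd-L} is exactly the statement that the comparison constant can be taken of the form $B_\delta\uparrow\infty$ as $\delta\downarrow0$ (there $B_\delta=\tfrac{c_6}{2}\gamma_r^{-2}e^{1/(36\delta^{2})}$), and it is this divergence, fed into Lemma \ref{concentration}, that yields $\P(L^+\ge\eta)\le \tfrac{4}{B_\delta\eta}+o_\delta(1)+o_t(1)$ and hence the contradiction.

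Moreover, obtaining this $\delta$-dependent gain is precisely where the work lies, and your sketch does not supply it. In the paper the Gaussian kernel in Bramson's $\psi$-function (Proposition \ref{bramson}) is compared at times $t$ and $s=2t$ only after splitting the integral according to whether $|y+x+K_f|\ge \sqrt{t}/(3\delta)$: the gain $e^{1/(36\delta^{2})}$ is available only on the far part $\psi_\ge$, while the near part $\psi_\le$ enjoys no gain at all and has to be killed separately, by showing $\int_{-\infty}^{-\sqrt{t}/\delta}\psi_\le(r,t,x)\,\theta(dx)\to0$ in probability (claim \eqref{small-term-psi}). That step is not a formality: it uses the upper bound of Lemma \ref{sharp-bound} to dominate $\psi_\le$ by $c_7\,e^{\sqrt{2}x-x^2/(16\sqrt{t})}$, and then a further application of the fixed-point property at the intermediate time $9\sqrt{t}$, together with Lemma \ref{y-order} and a lower bound on $\P(x+M_{9\sqrt t}\ge\sqrt2\cdot 9\sqrt t)$ via $\psi$ again, to conclude (Lemma \ref{small-term}). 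Also note that in the paper $r$ is not sent to infinity after $t$: it is chosen as $r=\sqrt{t}$, so your proposed order of limits does not match the mechanism. In short, the ``identify the dominant region of the Gaussian integral'' step you flag as the main obstacle is indeed the heart of the matter, and without the explicit $\delta$-dependent blow-up of the comparison constant and the separate treatment of the near-diagonal contribution, the proof of assertion (3) is incomplete.
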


\subsection{Proof of Theorem \ref{main-thm} given Lemma \ref{l.key}.}
$ $

\ni
{\em Proof of uniqueness in Theorem \ref{main-thm}.}
Assuming Lemma \ref{l.key}, we first obtain the tightness $(\Theta_t(f))_{t>0}$.  It also follows that along some subsequence of $t$, $Z_t^\theta$ converges in law to some non-negative random variable $Z_\infty^\theta$. Then along the same subsequence of $t$ and letting first $\delta\downarrow0$ and then $A\to\infty$, we obtain that
\[
\E[e^{-\<{f,\theta_t}}]\to \E[e^{-Z_\infty^\theta\Cb(f)}].
\]
Therefore $\E[e^{-\<{f,\theta}}]=\E[e^{-Z_\infty^\theta\Cb(f)}]$. Moreover, we get that $Z_\infty^\theta>0$ a.s. because we assumed $\theta\in \M \setminus\{0\}$ a.s.  We thus conclude the uniqueness of fixed points.

\begin{remark}
In fact, for a fixed point $\theta$, $Z_t^\theta$ converges in law to some positive random variable $Z_\infty^\theta$ so that $\theta$ is distributed as $\widetilde{\calE}_\infty$ shifted by $S=\frac{1}{\sqrt{2}}\log Z_\infty^\theta$.
\end{remark}

We prove the three assertions in the following subsections. Note that the tightness of $(Z_t^\theta)$ follows from the tightness of $(Z_t^\theta(0,\delta))_{t>0}$ for any fixed $\delta\in(0,1)$ and the assertions (2)-(3).

As explained in Section \ref{proof-sketch}, our proof relies on a concentration inequality for sums of independent Bernoulli random variables, stated in the following Lemma.

\begin{lemma}\label{concentration}
Let $\{X_i\}_{i\in\N^*}$ be independent Bernoulli random variables such that $\E[X_i]=p_i$. For any subset $I\subset \N^*$, let $X_I:=\sum_{i\in I}X_i$. If $\E[X_I]=\sum_{i\in I}p_i$ is finite, then for any $\varepsilon\in(0,1)$,
\[
\P\left(|X_I-\E[X_I]|\ge \varepsilon \E[X_I]\right)\leq \frac{1}{\varepsilon^2\E[X_I]}.
\]
If $\E[X_I]=\sum_{i\in I}p_i=\infty$, then $I$ is an infinite set and 
\[
X_I=+\infty,\textrm{ a.s. }
\]
\end{lemma}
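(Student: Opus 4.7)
The plan is to handle the two cases separately, using Chebyshev's inequality for the finite-mean case and the second Borel--Cantelli lemma for the infinite-mean case. Both are standard once one observes that the variance of a sum of independent Bernoulli variables is dominated by its mean.

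For the first assertion, suppose $\E[X_I]=\sum_{i\in I}p_i<\infty$. By independence,
\[
\Var(X_I)=\sum_{i\in I}p_i(1-p_i)\leq \sum_{i\in I}p_i=\E[X_I].
\]
Applying Chebyshev's inequality,
\[
\P\bigl(|X_I-\E[X_I]|\geq \varepsilon\E[X_I]\bigr)\leq \frac{\Var(X_I)}{\varepsilon^2 \E[X_I]^2}\leq \frac{\E[X_I]}{\varepsilon^2 \E[X_I]^2}=\frac{1}{\varepsilon^2 \E[X_I]}.
\]
This gives the desired inequality. (Note that the partial sums $\sum_{i\in I, i\leq N} X_i$ increase to $X_I$ a.s., and the monotone convergence together with the above uniform bound applied to finite truncations yields the statement for countable index sets.)

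For the second assertion, assume $\sum_{i\in I}p_i=\infty$. If $I$ were finite then $\E[X_I]=\sum_{i\in I}p_i$ would be finite, a contradiction, so $I$ is necessarily infinite. Consider the independent events $A_i:=\{X_i=1\}$ for $i\in I$. Since $\sum_{i\in I}\P(A_i)=\sum_{i\in I}p_i=\infty$, the second Borel--Cantelli lemma (which requires independence) implies that infinitely many of the events $A_i$ occur almost surely. Consequently $X_I=\sum_{i\in I}\mathbf{1}_{A_i}=+\infty$ almost surely, which concludes the proof.

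The lemma is essentially elementary and I do not expect any serious obstacle; the only minor care needed is that the index set $I$ may be countably infinite in the first part, which is handled by approximating with finite sub-sums and passing to the limit, using both monotone convergence for the mean and the uniform Chebyshev bound.
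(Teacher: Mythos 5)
Your proof is correct and follows exactly the paper's argument: Chebyshev's inequality combined with the bound $\mathrm{Var}(X_I)\leq \E[X_I]$ for the finite-mean case, and the second Borel--Cantelli lemma for the infinite-mean case. The extra remark on truncating a countably infinite index set is a harmless added detail; nothing further is needed.
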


If $\E[X_I]<\infty$, this Lemma follows directly from Chebyshev's inequality by noting that $\textrm{Var}(X)=\sum_{i=1}^n p_i(1-p_i)\leq \E[X]$. If $\E[X_I]=\infty$, the result is nothing but Borel-Cantelli Lemma. It indicates that if $\E[X]$ is large, then with high probability, $X$ is also large as it is comparable with its mean.

\subsection{Tightness of $(Z_t^\theta(0,\delta))_{t>0}$ for any fixed $\delta\in(0,1)$.}

Recall that $\theta=\sum_{i\in I}\delta_{x_i}$. Define 
\[
\mathcal{Z}_t^\theta(0,\delta):=\sum_{i: x_i\in[-\sqrt{t}/\delta, -\delta\sqrt{t}]}\ind{x_i+M^i_t\ge\sqrt{2}t}.
\]
Conditionally on $\theta$, $\mathcal{Z}_t^\theta(0,\delta)$ is sum of independent Bernoulli random variables such that $\E[\mathcal{Z}_t^\theta(0,\delta)\vert \theta]=Z_t^\theta(0,\delta)$. Note that $Z_t^\theta(0,\delta)\le \theta([-\sqrt{t}/\delta, -\delta\sqrt{t}])<\infty$ a.s. It thus follows that for any $K>0$,
\begin{align}
&\P(Z_t^\theta(0,\delta)\geq K)\nonumber\\
\leq & \P\left(Z_t^\theta(0,\delta)\geq K; |\mathcal{Z}_t^\theta(0,\delta)-Z_t^\theta(0,\delta)|\geq \frac12 Z_t^\theta(0,\delta)\right)+\P\left(Z_t^\theta(0,\delta)\geq K; \mathcal{Z}_t^\theta(0,\delta)\ge\frac12 Z_t^\theta(0,\delta)\right)\nonumber\\
\leq & \E\left[\ind{Z_t^\theta(0,\delta)\geq K}\P(|\mathcal{Z}_t^\theta(0,\delta)-Z_t^\theta(0,\delta)|\geq \frac12 Z_t^\theta(0,\delta)\vert\theta)\right]+\P\left(\mathcal{Z}_t^\theta(0,\delta)\geq \frac12 K\right)\nonumber\\
\leq & \E\left[\frac{4}{Z_t^\theta(0,\delta)}\ind{Z_t^\theta(0,\delta)\geq K}\right]+\P\left(\mathcal{Z}_t^\theta(0,\delta)\geq \frac12 K\right),\label{Keycompare}
\end{align}
where the last inequality comes from Lemma \ref{concentration}. We hence get that for any $K>0$ and $t>0$,
\[
\P(Z_t^\theta(0,\delta)\geq K)\leq \frac{4}{K}+\P\left(\mathcal{Z}_t^\theta(0,\delta)\geq \frac12 K\right).
\]
From the definition of $\mathcal{Z}_t^\theta(0,\delta)$, one sees easily that
\[
\mathcal{Z}_t^\theta(0,\delta) \le \theta_t(\R_+), \forall t>0,
\]
where $\theta_t(\R_+)$ is distributed as $\theta(\R_+)$. As assumed at the beginning, $\theta(\R_+)<\infty$ a.s. Therefore, one has
\[
\sup_{t>0}\P(Z_t^\theta(0,\delta)\geq K)\leq \frac{4}{K}+\P\left(\theta(\R_+)\ge \frac12 K\right)\to 0, \textrm{ as } K\to\infty.
\]
It suffices to conclude the tightness of $(Z_t^\theta(0,\delta))_{t>0}$.

\subsection{Convergence in probability of $R^+(t,\delta)$.}

Let us first consider
\[
R^+(t,\delta)=\int_{-\delta\sqrt{t}}^A\P(x+M_t\geq \sqrt{2}t-K_f)\theta(dx),
\]
which is easier to deal with than $L^+(t,\delta)$.

Our goal is to show that for any $\eta>0$,
\[
\limsup_{\delta\to0}\limsup_{t\to\infty}\P(R^+(t,\delta)\geq \eta)=0.
\]
By contradiction, if it fails,  there exist $\eta>0$ and $\varepsilon>0$ such that along some subsequence $(t_k,\delta_k)$ with $\delta_k\downarrow 0$ and $t_k\ge \delta_k^{-5}$,
\begin{equation}\label{hyp-R}
\P(R^+(t,\delta)\geq \eta)\geq \varepsilon>0.
\end{equation}
Similarly as the previous subsection, with a little more generality, we define for any $t,s>0$,
\[
\mathcal{R}^+_s(t,\delta):=\sum_{i: x_i\in [-\delta\sqrt{t},A]}\ind{x_i+M^i_s-\sqrt{2} s \ge -K_f}.
\]
In parallel with it, we define
\[
R^+_s(t,\delta):=\int_{-\delta\sqrt{t}}^A \P(x+M_s-\sqrt{2} s\ge -K_f)\theta(dx),
\]
with $R^+_t(t,\delta)=R^+(t,\delta)$. Apparently, $\mathcal{R}^+_s(t,\delta)\le \theta_s([-K_f,\infty))<\infty$ a.s. and $\E[\mathcal{R}^+_s(t,\delta)\vert \theta]=R_s^+(t,\delta)$. To apply the same idea as above, one needs to find some $s>0$ so that $R^+_s(t,\delta)$ could be very large. This is guaranteed by the following lemma.

\begin{lemma}\label{ratio-R}
Let $\delta\in(0,1)$ and $t\ge \delta^{-5}$. Take $s=\delta^2t$. Then, for $\delta$ sufficiently small ($t$ sufficiently large), there exists some constant $c_4>0$ such that
\[
\inf_{x\in[-\delta\sqrt{t}, A]}\frac{\P(x+M_s-\sqrt{2} s\ge -K_f)}{\P(x+M_t-\sqrt{2} t\ge -K_f)}\ge c_4 \delta^{-3}.
\]
\end{lemma}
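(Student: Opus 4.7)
The plan is to apply Lemma \ref{sharp-bound} directly, using its upper bound for the denominator and its lower bound for the numerator. Setting $X := -x - K_f$, we rewrite $\P(x + M_u - \sqrt{2}u \ge -K_f) = \P(M_u \ge \sqrt{2}u + X)$ for $u \in \{s,t\}$; the window $x \in [-\delta\sqrt{t}, A]$ becomes $X \in [-A-K_f,\, \delta\sqrt{t}-K_f]$. The claim reduces to proving that the ratio $\P(M_s \ge \sqrt{2}s + X) / \P(M_t \ge \sqrt{2}t + X)$ is at least of order $\delta^{-3}$, uniformly for $X$ in this window, with $s = \delta^2 t$ and $t \ge \delta^{-5}$.

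First I would verify that Lemma \ref{sharp-bound} applies at both times. The upper bound at time $t$ needs only $X \ge -\tfrac{1}{2}\log t$, which is automatic for $\delta$ small since $\log t \ge 5|\log \delta|$ while $-A-K_f$ is fixed. The lower bound at time $s$ requires both $X \ge -\tfrac{1}{2}\log s$ (also automatic, as $\log s \ge 3|\log \delta|$ under $t \ge \delta^{-5}$) and, more importantly, $X \le \sqrt{s} = \delta\sqrt{t}$. This last condition is precisely why the window in the statement starts at $-\delta\sqrt{t}$: it forces $X \le \delta\sqrt{t} - K_f \le \delta\sqrt{t}$. Plugging the two bounds of Lemma \ref{sharp-bound} into the ratio yields
\[
\frac{\P(M_s \ge \sqrt{2}s + X)}{\P(M_t \ge \sqrt{2}t + X)} \;\ge\; \frac{c_2}{c_1}\cdot\frac{X + \log s}{X + \log t}\cdot\Bigl(\frac{s}{t}\Bigr)^{-3/2}\cdot \exp\!\Bigl(\frac{X^2}{2t} - \frac{X^2}{2s}\Bigr).
\]

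The dominant factor is $(s/t)^{-3/2} = \delta^{-3}$, which is exactly the claimed growth, so it remains to show that the other two factors are each bounded below by a universal constant. The logarithm ratio is increasing in $X$ on the window (both numerator and denominator being positive for $\delta$ small), so it is minimized at $X = -A - K_f$; using $\log s/\log t = 1 + 2\log\delta/\log t \ge 3/5$ under $t \ge \delta^{-5}$, one directly checks that this minimum stays above, say, $1/2$. The Gaussian factor equals $\exp\!\bigl(-X^2(1-\delta^2)/(2\delta^2 t)\bigr)$, and the crude bound $X^2 \le \max\bigl((A+K_f)^2,\, \delta^2 t\bigr)$ combined with $t \ge \delta^{-5}$ keeps the exponent bounded, so this factor stays above, say, $e^{-1}$. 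Combining these contributions yields the inequality with an explicit constant $c_4$. I do not expect any real obstacle here; the $(s/t)^{-3/2}$ gain coming from Bramson's log correction is designed to dominate all other corrections, and the constraint $X \le \sqrt{s}$ needed for the lower bound in Lemma \ref{sharp-bound} is the conceptual reason the statement cannot extend to very negative $x$, precisely the regime addressed separately via the Left part $L(t,\delta)$.
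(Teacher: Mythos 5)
Your proposal is correct and follows essentially the same route as the paper: both apply Lemma \ref{sharp-bound} with the upper bound at time $t$ and the lower bound at time $s=\delta^2 t$ (the window $x\ge -\delta\sqrt{t}$ ensuring exactly the condition $X\le\sqrt{s}$), extract the factor $(s/t)^{-3/2}=\delta^{-3}$, and bound the log-ratio and the Gaussian correction below by universal constants using $t\ge\delta^{-5}$. The only cosmetic difference is that you retain the factor $e^{-X^2/(2t)}$ in the denominator's upper bound while the paper simply drops it; this changes nothing.
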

This lemma follows from Lemma \ref{sharp-bound}. Its proof is postponed to the end of this section.

Because of Lemma \ref{ratio-R}, one sees that for $s=\delta^2 t$,
\begin{equation*}
 \P(R^+(t,\delta)\geq \eta) \le \P(R^+_s(t,\delta)\ge c_4\eta \delta^{-3})
\end{equation*}
Redo the same estimate for $R^+_s(t,\delta)$ and $\mathcal{R}_s^+(t,\delta)$ as in \eqref{Keycompare}, we get that
\[
 \P(R^+_s(t,\delta)\ge c_4\eta \delta^{-3})\leq \frac{4}{c_4\eta\delta^{-3}}+\P(\theta([-K_f,\infty)\ge \frac{c_4}{2}\eta \delta^{-3})
\]
Letting $\delta\downarrow 0$ along the subsequence $(\delta_k)$ leads to a contradiction with \eqref{hyp-R}.

\subsection{Convergence in probability of $L^+(t,\delta)$.} Let us now turn to the study of the more delicate term
\[
L^+(t,\delta)=\int_{-\infty}^{-\sqrt{t}/\delta}\P(x+M_t\ge \sqrt{2}t-K_f)\theta(dx).
\]
In order to show that for any $\eta>0$,
\[
\limsup_{\delta\to0}\limsup_{t\to\infty}\P(L^+(t,\delta)\geq \eta)=0,
\]
we suppose by contradiction that there exist $\eta>0$ and $\varepsilon>0$ such that along some subsequence $(t_k,\delta_k)$ with $\delta_k\downarrow 0$ and $t_k\uparrow\infty$,
\begin{equation}\label{hyp-L}
\P(L^+(t,\delta)\geq \eta)\geq \varepsilon>0.
\end{equation}
In the same spirits as above, we define for any $s,t>0$,
\[
\mathcal{L}_s^+(t,\delta):=\sum_{i: x_i\le -\sqrt{t}/\delta}\ind{x_i+M^i_t-\sqrt{2}t\ge -K_f},
\]
as well as
\begin{equation*}
L^+_s(t,\delta)=\int_{-\infty}^{-\sqrt{t}/\delta}\P(x+M_s\ge \sqrt{2}s-K_f)\theta(dx),
\end{equation*}
with $L^+_t(t,\delta)=L^+(t,\delta)$.

One would expect to establish a similar result as Lemma \ref{ratio-R} for some well chosen $s$. Nevertheless, things will be significantly more delicate here as we do not have sharp uniform bounds for $\P(M_t\ge \sqrt{2}t-x-K_f)$ with any $x\in(-\infty, -\sqrt{t}/\delta]$. In the following lemma, when $s=2t$, we manage to compare $L^+_s(t,\delta)$ and $L^+(t,\delta)$ by analysing $\P(M_t\ge \sqrt{2}t-x-K_f)$ via $\psi(r,t,\sqrt{2}t+X)$ introduced in Proposition \ref{bramson}.

\begin{lemma}\label{bd-L}
Take $s=2t$. For any $\delta\in(0,1)$ sufficiently small, there exists $B_{\delta}\ge1$ such that for any $\eta >0$, as $t\to\infty$,
\[
\P\left(L^+(t,\delta))\ge \eta\right)\leq \P(L_s^+(t,\delta)\geq B_{\delta}\eta ) + o_t(1).
\]
Moreover, $B_\delta\uparrow\infty$ as $\delta\downarrow0$.
\end{lemma}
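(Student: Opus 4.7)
The plan is to establish a pointwise ratio bound: for $\delta$ small and $t$ large, uniformly over $X := -x - K_f$ with $x \le -\sqrt{t}/\delta$,
\[
u_M(s, \sqrt{2}s + X) \ge (1 + o_t(1))\, B_\delta \cdot u_M(t, \sqrt{2}t + X), \qquad s = 2t,
\]
with $B_\delta \uparrow \infty$ as $\delta \downarrow 0$. Integrating this against $\theta(dx)$ over $\{x \le -\sqrt{t}/\delta\}$ will yield $L^+_s(t,\delta) \ge (1 + o_t(1))\, B_\delta\, L^+(t,\delta)$, from which the lemma follows after slightly shrinking $B_\delta$ to absorb the $(1+o_t(1))$ factor.

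To prove the ratio bound I would apply Bramson's $\psi$-function machinery (Proposition \ref{bramson}) with $\varphi = \mathbf{1}_{\cdot \le 0}$, so that $u_\varphi = u_M$. Fixing $r$ large enough that $\gamma_r$ is close to $1$, Proposition \ref{bramson} squeezes
\[
u_M(t, \sqrt{2}t + X) \le \gamma_r\, \psi(r, t, \sqrt{2}t + X), \qquad u_M(s, \sqrt{2}s + X) \ge \gamma_r^{-1}\, \psi(r, s, \sqrt{2}s + X),
\]
valid for $X \ge 8r - \frac{3}{2\sqrt{2}}\log t$, which holds in our range once $t$ is large. It then suffices to show
\[
\frac{\psi(r, s, \sqrt{2}s + X)}{\psi(r, t, \sqrt{2}t + X)} \ge (1 + o_t(1)) \cdot 2^{-3/2} \cdot e^{X^2/(4t)};
\]
for $X \ge \sqrt{t}/\delta - K_f$ this is at least $(1 + o_t(1)) \cdot 2^{-3/2}\, e^{1/(4\delta^2)}$, so $B_\delta := 2^{-3/2}\, \gamma_r^{-2}\, e^{1/(4\delta^2)}$ will work.

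The analysis of $\psi$ rests on two observations. First, by Lemma \ref{sharp-bound} applied at the fixed time $r$, the integrand $u_M(r, y + \sqrt{2}r)\, e^{\sqrt{2}y}$ is effectively supported on a bounded range of $y$. Second, for such bounded $y$ and $X \ge \sqrt{t}/\delta$, the quantity $yX/(\tau - r)$ tends to $0$ as $t \to \infty$ (for $\tau \in \{t,s\}$). These two facts let one replace the Gaussian kernel $e^{-(y-X)^2/(2(\tau - r))}$ by $e^{-X^2/(2(\tau - r))}$ and the bracket $1 - e^{-2y(X + \frac{3}{2\sqrt{2}}\log\tau)/(\tau - r)}$ by its first-order expansion $2yX/(\tau - r)$. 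Plugging back yields the asymptotic
\[
\psi(r, \tau, \sqrt{2}\tau + X) = (1 + o_t(1))\, \sqrt{\tfrac{2}{\pi}}\, C_r\, X\, \tau^{-3/2}\, e^{-\sqrt{2}X - X^2/(2\tau)}, \quad C_r := \int_0^\infty u_M(r, y+\sqrt{2}r)\, y\, e^{\sqrt{2}y}\, dy,
\]
for both $\tau = t$ and $\tau = s$. In the ratio, the factors $X$ and $e^{-\sqrt{2}X}$ cancel, leaving $(t/s)^{3/2}\, e^{X^2/(2t) - X^2/(2s)} = 2^{-3/2}\, e^{X^2/(4t)}$, as required.

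The main obstacle will be ensuring uniformity of the $(1 + o_t(1))$ approximation across the full range $X \ge \sqrt{t}/\delta - K_f$: the expansion degrades when $X$ becomes comparable to $t$, since then $yX/t$ is no longer negligible for $y$ in the effective support of the integrand. The cleanest resolution is to truncate at a scale $X_{\max} = X_{\max}(t, \delta)$ (for instance $X_{\max} = t/(\log t)^2$) on which the asymptotics are uniform, and to absorb the tail $\{X \ge X_{\max}\}$ into the $o_t(1)$ term of the lemma statement by a first-moment/concentration argument in the spirit of Lemma \ref{concentration}, using the fact that Lemma \ref{sharp-bound} gives super-exponential decay in $X^2/t$ together with the a.s.\ finiteness of $\theta_t([-K_f, \infty))$ for the fixed point $\theta$.
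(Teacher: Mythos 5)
Your treatment of the truncated range $X\le X_{\max}=t/(\log t)^2$ is fine: with $r$ fixed, Proposition \ref{bramson} plus the expansion of the Gaussian kernel and of the bracket does give $\psi(r,\tau,\sqrt2\tau+X)=(1+o_t(1))\sqrt{2/\pi}\,C_r X\tau^{-3/2}e^{-\sqrt2 X-X^2/(2\tau)}$ uniformly there, hence the pointwise ratio $\ge(1+o_t(1))2^{-3/2}e^{X^2/(4t)}\ge B_\delta$. This is a legitimate variant of the paper's argument, which instead takes $r=\sqrt t$ and compares the two $\psi$-integrands directly after splitting at $|y+x+K_f|\gtrless\sqrt t/(3\delta)$, thereby covering \emph{all} $x\le-\sqrt t/\delta$ at once and never needing asymptotics of $\psi$ nor a truncation in $x$.

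The genuine gap is in your tail step, and it is exactly where the paper's proof puts its main effort. You claim the contribution of $\{x\le -X_{\max}\}$ can be absorbed into the $o_t(1)$ ``by a first-moment/concentration argument in the spirit of Lemma \ref{concentration}, using the super-exponential decay in $X^2/t$ together with the a.s.\ finiteness of $\theta_t([-K_f,\infty))$''. As stated this does not give $o_t(1)$: conditioning on $\theta$ and applying Lemma \ref{concentration} to the count of tail particles reaching level $-K_f$ at time $t$ only yields $\P(\text{tail integral}>\eta)\le 4/\eta+\P(\theta([-K_f,\infty))\ge\eta/2)$, i.e.\ a tightness bound with a fixed threshold, not convergence to zero; the smallness of the individual probabilities is of no help by itself because no growth assumption on $\theta$ near $-\infty$ is available (this is stressed after Theorem \ref{main-thm}). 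To convert tightness into convergence to zero you must compare $\P(x+M_t\ge\sqrt2 t-K_f)$ with the corresponding probability at another time in such a way that the ratio diverges \emph{uniformly over the whole unbounded range} $x\le -X_{\max}$, and then invoke $\theta_{t''}([-K_f,\infty))\overset{d}{=}\theta([-K_f,\infty))$. The obvious tool, the lower bound of Lemma \ref{sharp-bound} at a later time $t''$, is only valid for $X\le\sqrt{t''}$, so no single choice of $t''$ covers an unbounded set of $X$; this is precisely why the paper handles its leftover term through a lower bound on Bramson's $\psi$ at an intermediate time ($r=\sqrt t$, $s_t=9\sqrt t$), valid uniformly in $x$, in the claim \eqref{small-term-psi} and Lemma \ref{small-term}. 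So your scheme can be completed, but only by importing that diverging-ratio mechanism (or a dyadic-in-$x$ decomposition with comparison times growing with $|x|$); the one-sentence plan you give would not close the argument.
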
 

Its proof is postponed to the end of this section.

Again, like \eqref{Keycompare}, by Lemma \ref{concentration}, along the subsequence $(\delta_k, t_k)$, one has
\begin{align*}
\P(L_s^+(t,\delta)\geq B_{\delta}\eta )=&\P(B_{\delta}\eta \le L_s^+(t,\delta)<\infty )+\P(L_s^+(t,\delta)=+\infty)\\
\leq &\frac{4}{B_\delta \eta}+\P( \frac12 B_\delta\eta\le \theta([-K_f,\infty)<\infty)+\P(\theta([-K_f,\infty)=+\infty)\\
= &\frac{4}{B_\delta \eta}+o_\delta(1).
\end{align*}
So, by Lemma \ref{bd-L}, 
\[
\P(L^+(t,\delta)\ge \eta)\le \frac{4}{B_\delta \eta}+o_\delta(1)+o_t(1).
\]
Letting $t\to\infty$ and $\delta\to0$ along the subsequence $(t_k,\delta_k)$ leads to a contradiction with \eqref{hyp-L}.

In the next subsection, we prove Lemmas \ref{ratio-R} and \ref{bd-L}.

\subsection{Proofs of Lemmas \ref{ratio-R} and \ref{bd-L}.} We first prove Lemma \ref{ratio-R}.

\begin{proof}
For $t$ sufficiently large and $x\in[-\delta\sqrt{t},A]$, by Lemma \ref{sharp-bound} with $X=-x-K_f$,
\[
\P(x+M_t-\sqrt{2}t\ge -K_f)\le c_1(\log t-x-K_f)t^{-3/2}e^{\sqrt{2}(x+K_f)}.
\]
Again, applying Lemma \ref{sharp-bound} for $s=\delta^2 t$ with $t$ sufficiently large implies that
\[
\P(x+M_s-\sqrt{2}s\ge-K_f)\ge c_2(\log s-x-K_f) s^{-3/2}e^{\sqrt{2}(x+K_f)-\frac{(x+K_f)^2}{2s}}.
\]
It then follows that
\[
\frac{\P(x+M_s-\sqrt{2}s\ge-K_f)}{\P(x+M_t-\sqrt{2}t\ge -K_f)}\ge \frac{c_2\delta^{-3}}{c_1}\frac{\log s-x-K_f}{\log t -x-K_f} e^{-\frac{(x+K_f)^2}{2s}}.
\]
As $t\ge \delta^{-5}$ with $\delta>0$ small, one has
\[
\frac{\log s-x-K_f}{\log t -x-K_f}\ge 1/2, \forall x\in[-\delta\sqrt{t},A].
\]
Besides, 
\[
\sup_{x\in[-\delta\sqrt{t},A]}\frac{(x+K_f)^2}{2s}\le \frac{(-\delta\sqrt{t}+K_f)^2}{2\delta^2t}\le 1/2.
\]
We hence conclude Lemma \ref{ratio-R}.
\end{proof}

To prove Lemma \ref{bd-L}, we are going to analyse $\P(x+M_t-\sqrt{2}t\ge -K_f)$ by use of Bramson's ``$\psi$-function'' defined in Proposition \ref{bramson}. 
\begin{proof}
Note that
\[
\P(x+M_s-\sqrt{2}s\ge -K_f)=u_M(s, \sqrt{2}s-x-K_f), \forall x\in\R, \forall s>0,
\]
and that
\begin{equation}\label{def-Ls}
L^+_s(t,\delta)=\int_{-\infty}^{-\sqrt{t}/\delta}u_M(s,\sqrt{2}s-x-K_f)\theta(dx).
\end{equation}
Obviously, the corresponding initial condition $u_M(0,\cdot)$ satisfies \eqref{cond1}, \eqref{cond2} and \eqref{cond3}. So, Propositions \ref{bramson} implies that for $r\le \frac{\sqrt{t}}{8\delta}$ with $r$ large enough,
\[
 u_M(t,\sqrt{2}t-x-K_f) \leq \gamma_r\psi(r,t,\sqrt{2}t-x-K_f), \forall x\le -\sqrt{t}/\delta,
\]
where $\gamma_r\downarrow 1$ as $r\uparrow\infty$. Meanwhile, for $s=2t$, we also have
\[
u_M(s,\sqrt{2}s-x-K_f)\ge \gamma_r^{-1} \psi(r, s, \sqrt{2}s-x-K_f), \forall x\le -\sqrt{t}/\delta.
\]
Recall that
\begin{multline}
\psi(r,t,\sqrt{2}t-x-K_f)\\
=\frac{e^{\sqrt{2}(x+K_f)}}{\sqrt{2\pi(t-r)}}\int_0^\infty u_M(r,\sqrt{2}r+y)e^{\sqrt{2}y}e^{-\frac{(y+x+K_f)^2}{2(t-r)}}(1-e^{-2y\frac{-x-K_f+\frac{3}{2\sqrt{2}}\log t}{t-r}})dy.
\end{multline}
Notice that for $x\le -\sqrt{t}/\delta$ with sufficiently large $t$,
\[
1- e^{-2y\frac{\frac{3}{2\sqrt{2}}\log(t)- x- K_f}{t-r}} \leq g(y), \forall y\in\R_+,
\]
where
\begin{equation}\label{def-g}
g(y):=  2y\frac{\frac{3}{2\sqrt{2}}\log(t) - x- K_f}{t-r}\ind{y\leq \frac{t-r}{\frac{3}{2\sqrt{2}}\log(t) - x- K_f} }+\ind{y \geq \frac{t-r}{\frac{3}{2\sqrt{2}}\log(t) - x- K_f}}.
\end{equation}
On the other hand, for $s=2t$ and $\sqrt{t}/8\ge r\gg 1$, 
\[
1- e^{-2y\frac{\frac{3}{2\sqrt{2}}\log(s) - x- K_f}{s-r}} \geq 1- e^{-2y\frac{\frac{3}{2\sqrt{2}}\log(t) - x- K_f}{3(t-r)}} \geq c_5 g(y),
\]
with some constant $c_5>0$. As a consequence, for $x\le -\sqrt{t}/\delta$ with $t\gg1$,
\begin{align*}
\psi(r,t,\sqrt{2}t-x-K_f) \le &\frac{e^{\sqrt{2}(x+K_f)}}{\sqrt{2\pi(t-r)}}\int_0^\infty u_M(r,\sqrt{2}r+y)e^{\sqrt{2}y}e^{-\frac{(y+x+K_f)^2}{2(t-r)}}g(y)dy,\\
\psi(r,s,\sqrt{2}s-x-K_f) \ge& c_5\frac{e^{\sqrt{2}(x+K_f)}}{\sqrt{6\pi(t-r)}}\int_0^\infty u_M(r,\sqrt{2}r+y)e^{\sqrt{2}y}e^{-\frac{(y+x+K_f)^2}{2(s-r)}}g(y)dy.
\end{align*}
It remains to compare $e^{-\frac{(y+x+K_f)^2}{2(t-r)}}$ with $e^{-\frac{(y+x+K_f)^2}{2(s-r)}}$. Observe that for any $x,y\in\R$,
\[ 
\frac{-(y+ x+ K_f)^2}{2(s-r)} \geq \frac{-(y+ x+ K_f)^2}{2(t-r)} + \frac{(y+ x+ K_f)^2}{4(t-r)}. 
\]
Therefore,
\[
e^{-\frac{(y+x+K_f)^2}{2(s-r)}}\ge e^{-\frac{(y+x+K_f)^2}{2(t-r)}}\times\left(e^{\frac{1}{36\delta^2}}\ind{|y+x+K_f|\ge \frac{\sqrt{t}}{3\delta}}+\ind{|y+x+K_f|<\frac{\sqrt{t}}{3\delta}}\right).
\]
So, we obtain that
\begin{align*}
 u_M(t,\sqrt{2}t-x-K_f) \leq &\gamma_r\left(\psi_\ge(r,t,x)+\psi_\le(r,t,x)\right),\\
 \textrm{ and }u_M(s,\sqrt{2}s-x-K_f)\ge &c_6\gamma_r^{-1}\left(e^{\frac{1}{36\delta^2}}\psi_\ge(r,t,x)+\psi_\le(r,t,x)\right),
\end{align*}
where $c_6=\frac{c_5}{\sqrt{3}}$ and
\begin{align*}
\psi_\ge(r,t,x):= &\frac{e^{\sqrt{2}(x+K_f)}}{\sqrt{2\pi(t-r)}}\int_0^\infty u_M(r, y+ \sqrt{2}r)e^{\sqrt{2}y}e^{\frac{-(y+x+ K_f)^2}{2(t-r)}}g(y)\ind{|y+ x+ K_f| \geq \frac{\sqrt{t}}{3\delta}}dy, \\
\psi_\le(r,t,x):= &\frac{e^{\sqrt{2}(x+K_f)}}{\sqrt{2\pi(t-r)}}\int_0^\infty u_M(r, y+ \sqrt{2}r)e^{\sqrt{2}y}e^{\frac{-(y+x+ K_f)^2}{2(t-r)}}g(y)\ind{|y+ x+ K_f| <  \frac{\sqrt{t}}{3\delta}}dy.
\end{align*}
Going back to \eqref{def-Ls}, we see that
\begin{align*}
L^+(t,\delta)\le & \gamma_r\int_{-\infty}^{-\sqrt{t}/\delta}\psi_\ge(r,t,x)\theta(dx)+\gamma_r\int_{-\infty}^{-\sqrt{t}/\delta}\psi_\le(r,t,x)\theta(dx),\\
\textrm{ and } L_s^+(t,\delta)\ge& c_6\gamma_r^{-1} e^{\frac{1}{36\delta^2}}\int_{-\infty}^{-\sqrt{t}/\delta}\psi_\ge(r,t,x)\theta(dx)+c_6\gamma_r^{-1}\int_{-\infty}^{-\sqrt{t}/\delta}\psi_\le(r,t,x)\theta(dx).
\end{align*}
Recall that for $\delta\in(0,1)$, we take $r\le \frac{\sqrt{t}}{8\delta}$. We now claim that for $\delta\in(0,1/8)$ and $r=\sqrt{t}$, as $t\to \infty$,
\begin{equation}\label{small-term-psi}
\int_{-\infty}^{-\sqrt{t}/\delta}\psi_\le(r,t,x)\theta(dx) \overset{\P}{\longrightarrow} 0.    
\end{equation}
The proof of above claim is deferred to the end. By admitting this claim, we are ready to stress our argument: for any $\eta>0$ and $t$ sufficiently large,
\begin{align*}
&\P(L^+(t,\delta)>\eta)\\
\leq & \P\left(\gamma_r\int_{-\infty}^{-\sqrt{t}/\delta}\psi_\ge(r,t,x)\theta(dx)\ge \eta/2\right)+\P\left(\gamma_r\int_{-\infty}^{-\sqrt{t}/\delta}\psi_\le(r,t,x)\theta(dx)\ge \eta/2\right)\\
\leq &\P(L_s(t,\delta)\geq B_\delta \eta)+o_t(1),
\end{align*}
where $B_\delta:=\frac{c_6}{2}\gamma^{-2}_r e^{\frac{1}{36\delta^{2}}}$ goes to infinity as $\delta\to0$.
\end{proof}
It remains to check the claim \eqref{small-term-psi}. Recall the definition \eqref{def-g} of $g$, one sees that for $x\leq -\sqrt{t}/\delta$ with $\delta\in (0, 1/8)$ and $t$ sufficiently large,
\[
g(y)\ind{|y+ x+ K_f| <  \frac{\sqrt{t}}{3\delta}}=\ind{|y+ x+ K_f| <  \frac{\sqrt{t}}{3\delta}}.
\]
So,
\[
\psi_\le(r,t,x)= \frac{e^{\sqrt{2}(x+K_f)}}{\sqrt{2\pi(t-r)}}\int_0^\infty u_M(r, y+ \sqrt{2}r)e^{\sqrt{2}y}e^{\frac{-(y+x+ K_f)^2}{2(t-r)}}\ind{|y+ x+ K_f| \leq  \frac{\sqrt{t}}{3\delta}}dy.
\]
Next, by the upper bound in Lemma \ref{sharp-bound}, for any $y>0$,
\[
u_M(r, \sqrt{2}r + y)=\P(M_r\ge \sqrt{2}r+y) \leq c_1(y+ \log(r))r^{-\frac{3}{2}}e^{-\sqrt{2}y- \frac{y^2}{2r}}.
\]
Recall that we have chosen $r= \sqrt{t}$. It is easy to check that for $x \leq -\sqrt{t}/\delta$ and $|y+ x+ K_f| \leq \frac{\sqrt{t}}{3\delta}$, 
\[
u_M(r, \sqrt{2}r + y)e^{\sqrt{2}y}\le c_1(y+ \log(r))r^{-\frac{3}{2}}e^{- \frac{y^2}{2r}} \leq  c_1e^{-\frac{x^2}{16r}},
\]
for $t$ sufficiently large. 

Therefore,
\begin{align*}
\psi_\le(r,t,x)
&\leq c_7e^{\sqrt{2}x - \frac{x^2}{16r}}\frac{1}{\sqrt{2\pi(t-r)}}\int_0^\infty e^{\frac{-(y+x+ K_f)^2}{2(t-r)}}\ind{|y+ x+ K_f| \leq  \frac{\sqrt{t}}{3\delta}}dy \\
& \leq c_7 e^{\sqrt{2}x - \frac{x^2}{16\sqrt{t}}},
\end{align*}
with $c_7=c_1e^{\sqrt{2}K_f}$.  So, to conclude \eqref{small-term-psi}, it suffices to show the following lemma.

\begin{lemma}\label{small-term}
If $\theta \in \M$ is a fixed point of BBM, then as $t\to \infty$, 
\begin{equation}\label{aux-claim}
\int_{-\infty}^{\frac{-\sqrt{t}}{\delta}}e^{\sqrt{2}x - \frac{x^2}{16\sqrt{t}}} \theta(dx) \overset{\P} \longrightarrow 0.
\end{equation}

\end{lemma}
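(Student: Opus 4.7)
The integrand $e^{\sqrt{2}x - x^2/(16\sqrt{t})}$ decays Gaussianly in $|x|$, but since the total mass $\theta\bigl((-\infty, -\sqrt{t}/\delta]\bigr)$ can a.s.\ be infinite (local finiteness alone is not enough), the plan is to exploit the fixed point property $\theta_s \stackrel{d}{=} \theta$ at carefully chosen observation times $s$, and transfer control of the integral to a random quantity which is tight via Lemma~\ref{concentration}.

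Concretely, I would decompose the integration range dyadically as $(-\infty, -\sqrt{t}/\delta] = \bigsqcup_{k \geq 0} J_k$ with $J_k := (-2^{k+1}\sqrt{t}/\delta,\, -2^k\sqrt{t}/\delta]$, and for each $k$ choose the time $s_k := 4 \cdot 4^k t/\delta^2$, the smallest choice ensuring $|x| \leq \sqrt{s_k}$ for all $x \in J_k$, so the lower bound in Lemma~\ref{sharp-bound} applies and yields $\P(M_{s_k} \geq \sqrt{2}s_k - x - K_f) \geq c\,\delta^2 t^{-1} 4^{-k} e^{\sqrt{2}x}$ for $x \in J_k$ (using that the Gaussian exponent $(x+K_f)^2/(2s_k)$ is bounded on $J_k$). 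Since the integrand on $J_k$ satisfies $e^{\sqrt{2}x - x^2/(16\sqrt{t})} \leq e^{\sqrt{2}x}\cdot e^{-4^k\sqrt{t}/(16\delta^2)}$, combining these gives a pointwise comparison
\[
\int_{J_k} e^{\sqrt{2}x - x^2/(16\sqrt{t})}\,\theta(dx) \leq g_k\, I_k,\qquad
g_k := C\, 4^k t\delta^{-2}\, e^{-4^k\sqrt{t}/(16\delta^2)},
\]
with $I_k := \int_{J_k} \P(M_{s_k} \geq \sqrt{2}s_k - x - K_f)\,\theta(dx)$.

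Arguing exactly as in the preceding proof of the $R^+$ convergence, the fixed point property $\theta_{s_k} \stackrel{d}{=} \theta$ together with Lemma~\ref{concentration} applied conditionally on $\theta$ to the independent Bernoullis indicating whether each atom of $J_k$ has a descendant in $[-K_f, \infty)$ at time $s_k$ yields that each $I_k$ is uniformly tight: $\P(I_k \geq A) \leq 4/A + \P(\theta([-K_f, \infty)) \geq A/2)$ for all $k, t$. The coefficients $g_k$ decay super-exponentially in $k$ (one checks $g_{k+1}/g_k = 4\,e^{-3\cdot 4^k\sqrt{t}/(16\delta^2)} \ll 1$ for $t$ large), so $\sum_k g_k \leq 2 g_0 = O\bigl(t\delta^{-2}\, e^{-\sqrt{t}/(16\delta^2)}\bigr) \to 0$ as $t \to \infty$ for fixed $\delta$.

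The main obstacle is controlling the infinite random sum $\sum_k g_k I_k$ without a moment assumption on $\theta([-K_f, \infty))$: individual tightness of each $I_k$ does not suffice via a naive union bound. My plan to resolve this is to use geometrically growing thresholds $A_k = 2^k A$ in the union bound, so that on the event $\{I_k \leq 2^k A \text{ for all } k\}$ the sum is bounded by $A\sum_k 2^k g_k = O(A g_0) \to 0$; the super-exponential decay of $g_k$ ensures that $\sum_k 2^k g_k$ remains $O(g_0)$, while the sum $\sum_k 4/(2^k A) = 8/A$ is small for $A$ large and the tail $\sum_k \P(\theta([-K_f, \infty)) > 2^{k-1}A)$ is controlled using the a.s.\ finiteness of $\theta([-K_f, \infty))$ (making $A$ large enough that all relevant thresholds are reached simultaneously with high probability). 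Putting these together yields $\sum_k g_k I_k \to 0$ in probability, which is the desired conclusion.
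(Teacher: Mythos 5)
Your per-scale work is sound: the dyadic decomposition, the application of Lemma~\ref{sharp-bound} at the times $s_k=4\cdot 4^k t/\delta^2$ (where indeed $|x|\le\sqrt{s_k}$ and the Gaussian correction is $O(1)$), and the uniform-in-$(k,t)$ tightness bound $\P(I_k\ge A)\le 4/A+\P(\theta([-K_f,\infty))\ge A/2)$ obtained from Lemma~\ref{concentration} together with $\mathcal I_k\le\theta_{s_k}([-K_f,\infty))\overset{d}{=}\theta([-K_f,\infty))$ all go through. The gap is in the final summation over scales. Your union bound requires $\sum_{k\ge0}\P\bigl(\theta([-K_f,\infty))>2^{k-1}A\bigr)$ to be small for large $A$, which is essentially the condition $\E\bigl[\log_+\theta([-K_f,\infty))\bigr]<\infty$; almost sure finiteness makes each term small but says nothing about summability, and no choice of thresholds can repair this: any admissible thresholds $A_k$ (i.e.\ with $\sum_k g_kA_k$ small) form a deterministic sequence, and for any deterministic sequence $u_k\to\infty$ there exist a.s.\ finite variables $N$ with $\sum_k\P(N>u_k)=\infty$. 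This is not a hypothetical pathology here: the theorem allows an arbitrary shift $S$, and for $\theta=\bar\calE_\infty^S$ with a sufficiently heavy-tailed $S$ (e.g.\ $\P(S>s)\asymp1/\log s$) the tail of $\theta([-K_f,\infty))$ decays so slowly that $\sum_k\P(\theta([-K_f,\infty))>u_k)=\infty$ for every geometric, or even doubly exponential, sequence $u_k$ --- yet such a $\theta$ is a genuine fixed point the lemma must cover. Nor can you replace the union bound by a single event ``all thresholds hold simultaneously'': the variables $\theta_{s_k}([-K_f,\infty))$, one per scale, are only equal in law to $\theta([-K_f,\infty))$ and are not monotonically coupled or dominated by a single random variable, so the sum of infinitely many tail probabilities is unavoidable in your scheme.

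The reason you were forced into infinitely many scales is that the lower bound of Lemma~\ref{sharp-bound} only holds for $X\le\sqrt{s}$, so the observation time must grow with $|x|$. The paper avoids this entirely: it observes the system at the \emph{single} time $s_t=9\sqrt t$ and lower-bounds $\P(x+M_{s_t}\ge\sqrt2 s_t)=u_M(s_t,\sqrt2 s_t-x)$ by Bramson's $\psi$-function (Proposition~\ref{bramson}) with $r=\sqrt t$, whose validity range $X\ge 8r-\tfrac{3}{2\sqrt2}\log s_t$ covers all $x\le-\sqrt t/\delta$ at once when $\delta<1/8$. The Gaussian kernel of variance $s_t-r=8\sqrt t$ is precisely the origin of the exponent $x^2/(16\sqrt t)$ in the statement, and after restricting the $y$-integral to $[A_1t^{1/4},A_2t^{1/4}]$ via Lemma~\ref{y-order} one gets $\alpha_t:=\inf_{x\le-\sqrt t/\delta}\P(x+M_{s_t}\ge\sqrt2 s_t)\,e^{-\sqrt2 x+x^2/(16\sqrt t)}\to\infty$. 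A single application of Lemma~\ref{concentration}, with $\mathcal Y_t\le\theta_{s_t}(\R_+)\overset{d}{=}\theta(\R_+)$, then yields the claim with no summation over scales and no integrability assumption. To salvage your route you would need an a priori tail (log-moment) estimate on $\theta([-K_f,\infty))$ valid for an arbitrary fixed point, which is not available at this stage of the argument; the uniform single-time comparison is the missing idea.
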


\begin{proof} Let
\[
\mathcal{Y}_t:=\sum_{i: x_i\in (-\infty,-\sqrt{t}/\delta]}\ind{x_i+M^i(s_t)-\sqrt{2}s_t\ge0}.
\]
with $s_t=9\sqrt{t}$. Then, we have
\[
\E[\mathcal{Y}_t\vert\theta]=\int_{-\infty}^{\frac{-\sqrt{t}}{\delta}}\P(x+M_{s_t}-\sqrt{2}s_t\ge0) \theta(dx) \textrm{ and } \mathcal{Y}_t\le \theta_t(\R_+)<\infty \textrm{ a.s.}
\]
In fact, we could compare $\E[\mathcal{Y}_t\vert\theta]$ with $\int_{-\infty}^{\frac{-\sqrt{t}}{\delta}}e^{\sqrt{2}x - \frac{x^2}{16\sqrt{t}}} \theta(dx)$ by checking the following convergence: for any fixed $\delta\in(0,1/8)$,
\begin{equation}\label{aux-1}
\lim_{t\to \infty}\inf_{x\leq \frac{-\sqrt{t}}{\delta}}\frac{\P(x+ M_{s_t} \geq \sqrt{2}s_t)}{e^{\sqrt{2}x - \frac{x^2}{16\sqrt{t}}}} = \infty.
\end{equation}
Recall that we have chosen $r=\sqrt{t}$. Again by Proposition \ref{bramson}, 
\begin{equation}\label{psi-again}
    \P(x+ M_{s_t} \geq \sqrt{2}s_t)= u_M(s_t, \sqrt{2}s_t-x) \geq \gamma_{r}^{-1}\psi(\sqrt{t}, s_t, \sqrt{2}s_t-x).
\end{equation} 
It implies that
\begin{align*}
&\frac{\P(x+ M_{s_t} \geq \sqrt{2}s_t)}{e^{\sqrt{2}x - \frac{x^2}{16\sqrt{t}}}} \\
&\ge\gamma_r^{-1} \sqrt{\frac{1}{16\pi}}\int_0^\infty u_M(\sqrt{t}, y+ \sqrt{2}\sqrt{t})ye^{\sqrt{2}y}e^{\frac{x^2-(y+x)^2}{16\sqrt{t}}}\frac{1}{y t^{1/4}}\bigl\{1- e^{-2y\frac{-x + \frac{3}{2\sqrt{2}}\log(9\sqrt{t})}{8\sqrt{t}}}\bigr\}dy.
\end{align*}
Recall that $\Cb_M=\lim_{t\to\infty}\sqrt{\frac{2}{\pi}}\int_0^\infty u_M(\sqrt{t}, y+ \sqrt{2}\sqrt{t})ye^{\sqrt{2}y}$. In view of Lemma \ref{y-order}, we can find constants $0< A_1 < A_2 < \infty$ such that for all $t$ large enough, 
\[ \sqrt{\frac{2}{\pi}}\int_{A_1t^{\frac{1}{4}}}^{A_2 t^{\frac{1}{4}}}u_M(\sqrt{t}, y+ \sqrt{2}\sqrt{t})ye^{\sqrt{2}y} dy \geq \frac{\Cb_M}{2}.\]
Consequently, one has
\begin{multline*}
\inf_{x\leq \frac{-\sqrt{t}}{\delta}}\frac{\P(x+ M_{s_t} \geq \sqrt{2}s_t)}{e^{\sqrt{2}x - \frac{x^2}{16\sqrt{t}}}} \\
\ge \frac{\Cb_M\gamma_r^{-1}}{8\sqrt{2}} \inf_{y\in [A_1t^{\frac{1}{4}},A_2t^{\frac{1}{4}}], x\leq -\sqrt{t}/\delta}e^{\frac{x^2-(y+x)^2}{16\sqrt{t}}}\frac{1}{y t^{1/4}}\bigl\{1- e^{-2y\frac{-x + \frac{3}{2\sqrt{2}}\log(9\sqrt{t})}{8\sqrt{t}}}\bigr\},
\end{multline*}
where it is easy to check that uniformly in $ y\in [A_1t^{\frac{1}{4}},A_2t^{\frac{1}{4}}]$ and $x\leq -\sqrt{t}/\delta$, 
\[ e^{\frac{x^2-(y+x)^2}{16\sqrt{t}}}\frac{1}{yt^{1/4}}\bigl\{1- e^{-2y\frac{-x + \frac{3}{2\sqrt{2}}\log(9\sqrt{t})}{8\sqrt{t}}}\bigr\} \to \infty \hspace{2mm}\mbox{as}\hspace{2mm} t\to \infty.\]
We hence obtain \eqref{aux-1}.

Let $\alpha_t:=\inf_{x\leq \frac{-\sqrt{t}}{\delta}}\frac{\P(x+ M_{s_t} \geq \sqrt{2}s_t)}{e^{\sqrt{2}x - \frac{x^2}{16\sqrt{t}}}}$. Then, for any $\eta >0$,
\begin{align*}
\mathbb{P}\left(\int_{-\infty}^{\frac{-\sqrt{t}}{\delta}}e^{\sqrt{2}x - \frac{x^2}{16\sqrt{t}}} \theta(dx) > \eta \right) &\leq \mathbb{P}\left(\E[\mathcal{Y}_t\vert \theta] > \alpha_t \eta \right).
\end{align*}
Again, like \eqref{Keycompare}, applying Lemma \ref{concentration} to $\mathcal{Y}_t$, we get that
\[
\mathbb{P}\left(\E[\mathcal{Y}_t\vert \theta] > \alpha_t \eta \right)\le \frac{4}{\alpha_t \eta}+\P(\theta(\R_+)\ge \alpha_t \eta/2).
\]
Letting $t\to\infty$ implies that for any $\eta>0$,
\[
\lim_{t\to\infty}\mathbb{P}\left(\int_{-\infty}^{\frac{-\sqrt{t}}{\delta}}e^{\sqrt{2}x - \frac{x^2}{16\sqrt{t}}} \theta(dx) > \eta \right) =0\,, 
\]
which thus ends our proof. 
\end{proof}

\section{A large subspace left invariant under BBM.}\label{s.basin}


In this section, we introduce a space $\M_{3/2}$ which as opposed to $\M$ has the property that any point process $\theta\in \M_{3/2}$ will satisfy that the BBM (with critical drift or not) $\theta_t$ will still belong to $\M_{3/2}$ a.s. We believe this space may be a natural candidate for the analysis of the basin of attraction both for Liggett's fixed points from \cite{liggett78} as well as for the fixed points from Theorem \ref{main-thm}. 

\begin{definition}\label{d.M32}
We define the following sub-space of $\M$
\begin{equation}\label{non-explosive}
\M_{3/2}:=\{\eta \in\M: \exists \beta\in(0,1)\textrm{ s.t. } \int_{-\infty}^{+\infty} e^{- \beta |x|^{3/2}}\mu(dx) < \infty \}\,.
\end{equation}
\end{definition}
\begin{remark}\label{}
One may also wish to consider the more general spaces $\M_\alpha$ for any exponent $1<\alpha<2$ but for simplicity we stick to $\alpha=3/2$ here. 
\end{remark}
\begin{remark}\label{}
Notice the {\em open condition} $\exists \beta\in (0,1)$. Without this condition, one can prove (by designing specific counter-examples) that the slightly simpler spaces
\[
\M_{3/2,\beta}:= \{\eta \in\M: \textrm{ s.t. } \int_{-\infty}^{+\infty} e^{- \beta |x|^{3/2}}\mu(dx) < \infty \}\,,
\]
are not left invariant under BBM. 
\end{remark}

We will prove below the following proposition. 
\begin{proposition}\label{pr.M32}
All the fixed points found in Theorem \ref{main-thm} a.s. belong to $\M_{3/2}$. 

Furthermore, the space $\M_{3/2}$ is left invariant under BBM in the following sense: for any point process $\theta\in \M_{3/2}$ a.s. and for any $t>0$, $\theta_t$ will also be a.s. in $\M_{3/2}$. 
\end{proposition}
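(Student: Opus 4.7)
For the first claim, I would use the explicit description $\theta \overset{d}{=} \bar\calE_\infty^S$ from Theorem~\ref{main-thm}. Writing $\bar\calE_\infty = \sum_{i,j}\delta_{p_i + d_{i,j}}$ with $\mathcal{P} = \sum\delta_{p_i}$ a Poisson point process of intensity $\sqrt{2}e^{-\sqrt{2}p}dp$ and the $\calD^i$ i.i.d.\ copies of the limiting decoration $\calD$, Campbell's formula together with the change of variable $z = y - p$ gives the first-moment measure of $\bar\calE_\infty$ in a simple form:
\[
\E[\bar\calE_\infty(dy)] = \sqrt{2}\,C_\calD\,e^{-\sqrt{2}y}\,dy, \qquad C_\calD := \E\Bigl[\int e^{\sqrt{2}z}\,\calD(dz)\Bigr],
\]
with $C_\calD < \infty$ because the mean density of $\calD$ decays fast enough as $z\to-\infty$ (this follows from the explicit construction of the limiting decoration in Section~6 of~\cite{aidekon13}; $C_\calD$ is essentially $\Cb_M$ of Section~\ref{ss.bramson1} up to normalisation). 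Conditioning on $S=s$, this yields
\[
\E\Bigl[\int e^{-\beta|x|^{3/2}}\bar\calE_\infty^S(dx)\Bigm|S=s\Bigr] \;=\; \sqrt{2}\,C_\calD\,e^{\sqrt{2}s}\int_\R e^{-\sqrt{2}y-\beta|y|^{3/2}}\,dy \;<\;\infty
\]
for every $\beta>0$, since $\beta|y|^{3/2}$ dominates $\sqrt{2}|y|$ at $\pm\infty$. As $|S|<\infty$ a.s.\ and $\beta\in(0,1)$ is arbitrary, this gives $\int e^{-\beta|x|^{3/2}}\theta(dx)<\infty$ a.s., hence $\theta\in\M_{3/2}$ a.s.

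For the invariance assertion, fix $\theta = \sum_i\delta_{x_i}$ with $\int e^{-\beta|x|^{3/2}}\theta(dx)<\infty$ a.s.\ for some $\beta\in(0,1)$ and set $\beta':=\beta$. The plan is to bound $\E\bigl[\int e^{-\beta'|y|^{3/2}}\theta_t(dy) \bigm| \theta\bigr]$ and show it is a.s.\ finite; this immediately gives $\theta_t\in\M_{3/2}$ a.s. By the many-to-one formula $\E[\sum_k f(\chi_k(t))]=e^t\E[f(B_t)]$ with $B_t\sim\mathcal N(0,t)$, followed by the change of variable $v=x_i+u-\sqrt{2}t$ and completion of the square,
\[
\E\Bigl[\sum_k e^{-\beta|x_i+\chi^i_k(t)-\sqrt{2}t|^{3/2}}\Bigm|\theta\Bigr] \;=\; \frac{e^{\sqrt{2}x_i-x_i^2/(2t)}}{\sqrt{2\pi t}}\,I(x_i,t),
\]
where $I(x_i,t):=\int_\R e^{-v^2/(2t)+(x_i/t-\sqrt{2})v-\beta|v|^{3/2}}dv$. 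A Laplace-method analysis of $I(x_i,t)$ — in which $\beta|v|^{3/2}$ eventually dominates the linear drift — identifies two regimes according to the location of the saddle point $w_*$: for $|x_i|\gg t^2$ the saddle sits at $w_*\approx|x_i|$ and $I(x_i,t)\lesssim\sqrt{t}\,e^{x_i^2/(2t)-\beta|x_i|^{3/2}}$, whereas for $|x_i|\lesssim t^2$ the saddle is much smaller and $I(x_i,t)$ is bounded by a polynomial in $t$ times $\exp(O(|x_i|^3/(\beta^2 t^3)))$. Combined with the prefactor $e^{\sqrt{2}x_i-x_i^2/(2t)}/\sqrt{2\pi t}$, this yields, for $x_i\leq 0$,
\[
\E\Bigl[\sum_k e^{-\beta|x_i+\chi^i_k(t)-\sqrt{2}t|^{3/2}}\Bigm|\theta\Bigr] \;\leq\; P(t)\,\exp\bigl(-\sqrt{2}|x_i|-c\min(x_i^2/t,\,\beta|x_i|^{3/2})\bigr),
\]
with $P(t)$ polynomial and $c>0$ universal.

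Summing over $i$, we split at the threshold $R_t:=4\beta^2 t^2$, which equalizes $x_i^2/(2t)$ and $\beta|x_i|^{3/2}$. For $|x_i|\geq R_t$ the bound is $\leq P(t)\,e^{-c\beta|x_i|^{3/2}}$; provided $c\beta\geq\beta$ in our Laplace bound (and the open condition $\beta\in(0,1)$ gives the necessary slack, allowing one to enlarge the target exponent $\beta'\in(\beta,1)$ by an arbitrarily small amount should the universal constant $c$ turn out to be slightly less than one), the sum is bounded by $P(t)\int e^{-\beta|x|^{3/2}}\theta(dx)$, finite a.s. For $|x_i|<R_t$, only finitely many atoms of $\theta$ lie in $[-R_t,R_t]$ a.s., since $\int e^{-\beta|x|^{3/2}}\theta(dx)<\infty$ implies $\theta([-R,R])\leq e^{\beta R^{3/2}}\int e^{-\beta|x|^{3/2}}\theta(dx)<\infty$ for every $R>0$; the contribution is at most $P(t)$ times this finite count. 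Combining the two regimes, $\E\bigl[\int e^{-\beta'|y|^{3/2}}\theta_t(dy)\bigm|\theta\bigr]<\infty$ a.s., hence $\theta_t\in\M_{3/2}$ a.s. The main technical obstacle is the uniform Laplace-method estimate of $I(x_i,t)$ across the crossover regime $|x_i|\sim t^2$, where the three contributions $-v^2/(2t)$, $(x_i/t-\sqrt{2})v$ and $-\beta|v|^{3/2}$ in the exponent are of comparable size and must be balanced carefully; it is precisely at this step that the open condition in the definition of $\M_{3/2}$ (as opposed to a fixed-$\beta$ version $\M_{3/2,\beta}$) plays an essential role.
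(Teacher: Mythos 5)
Your treatment of the invariance of $\M_{3/2}$ (second half) follows essentially the same route as the paper: conditional first moment given $\theta$, many-to-one, a Laplace-method bound of the form $e^{-\beta|x|^{3/2}+\mathrm{error}(t,x)}$, and absorption of the error by slightly enlarging the exponent, which is exactly where the open condition $\exists\beta\in(0,1)$ is used (the paper's error term is $K_t|x|$ with $K_t=\Theta(t)$, and it passes from $\beta_0$ to some $\beta\in(\beta_0,1)$). Two small caveats there: your bound should be stated as allowing a strictly larger target exponent $\beta'\in(\beta,1)$ from the start rather than hoping for a constant $c\geq 1$, since the drift/entropic correction is genuinely present; and membership in $\M_{3/2}$ also requires $\theta_t(\R_+)<\infty$, which your weighted integral does not imply (the weight is small at $+\infty$) and must be checked separately, as the paper does with the same many-to-one computation.

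The first half, however, contains a genuine gap: the claim that $C_\calD=\E\bigl[\int e^{\sqrt{2}z}\,\calD(dz)\bigr]<\infty$, and hence that $\E[\bar\calE_\infty(dy)]=\sqrt{2}\,C_\calD e^{-\sqrt{2}y}dy$ is a locally finite measure, is false. The decoration's mean level counts grow at the critical exponential rate, $\E[\calD([-x,0])]\asymp e^{\sqrt{2}x}$ (the bound from Proposition 1.5 of \cite{cortines2019} quoted in the paper is of exactly this order and is sharp), so $\E\bigl[\int e^{\sqrt{2}z}\calD(dz)\bigr]=\int_0^\infty\sqrt{2}e^{-\sqrt{2}x}\E[\calD([-x,0])]\,dx=\infty$: arbitrarily high Poisson leaders send decoration points into any fixed interval with non-summable expected counts. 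Indeed the paper states explicitly (Section \ref{non-critical-drift}) that the intensity measure of \emph{any} non-zero critical fixed point is infinite on every interval. Consequently the conditional expectation you compute via Campbell's formula is $+\infty$ and no almost-sure finiteness can be extracted from it; the whole first-moment route collapses at its first step. The paper circumvents precisely this by proving an almost-sure, not first-moment, bound on the counting function: it truncates to leaders $p_i\in[-x,x]$, where the truncated expectation is finite and of order $xe^{\sqrt{2}x}$, upgrades this to the a.s.\ bound $\widetilde{\calE}_\infty([-x,\infty))\leq x^3e^{\sqrt{2}x}$ for large $x$ via Markov plus Borel--Cantelli, and treats the (a.s.\ eventually absent) leaders above level $x$ separately; this a.s.\ polynomially corrected exponential bound is then enough to give $\int e^{-\beta|x|^{3/2}}\widetilde{\calE}_\infty(dx)<\infty$ for every $\beta>0$. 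Some argument of this truncation/a.s.\ type is unavoidable, so your first part needs to be redone along these lines.
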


Since $\M_{3/2}$ is preserved by our stochastic process and contains all the fixed points and since it is very large (i.e. it includes processes with super-exponential tail near $-\infty$), we believe it is a natural first step to analyse the basin of attraction of the fixed points from Theorem \ref{main-thm}. Another possible use is as follows: now that the space $\M_{3/2}$ avoids possible coming downs from $-\infty$, if one could further define a suitable metric $\rho$ on this space so that $(\M_{3/2},\rho)$ is a Polish space, we may then view the invariant measures as defined in Definition \ref{d.FP} in the more classical setting of Feller processes. 

The proof of Proposition \ref{pr.M32} is given in the next two subsections. 
\subsection{The fixed points belong to $\M_{3/2}$.}
To check that all fixed points belong a.s. to $\M_{3/2}$, it is enough to prove that $\widetilde{\calE}_\infty\in \M_{3/2}$ a.s. In fact we are going to show, based on \cite{cortines2019}, that a.s.
\begin{equation}\label{roughtailEt}
\widetilde{\calE}_\infty ([-x,\infty))\le x^3 e^{\sqrt{2}x} \textrm{ as } x \to +\infty,
\end{equation}
which suffices to get the a.s. finiteness of $\int_\R e^{-\beta |x|^{3/2}}\widetilde{\calE}_\infty(dx)$ for any $\beta>0$.

Recall \eqref{tilde-def},  one has $\widetilde{\calE}_\infty([-x,\infty))=\sum_{i}\ind{p_i\ge-x}\calD^{i}([-x-p_i,0])$.
As in Lemma 6.1 of \cite{cortines2019}, we have
\[
\widetilde{\calE}_\infty ([-x,\infty))=\sum_{i}\ind{p_i\in [-x,x]}\calD^{i}([-x-p_i,0])+\sum_{i}\ind{p_i>x}\calD^{i}([-x-p_i,0]).
\]
It is known from Proposition 1.5 of \cite{cortines2019} that there exists some constant $c_8>0$ such that for any $x\ge 0$, 
\[
\E[\calD[-x,0]]\leq c_8 e^{\sqrt{2}x}. 
\]
So,
\[
\E\left[\sum_{i}\ind{p_i\in [-x,x]}\calD^{i}([-x-p_i,0])\right]\leq \int_{-x}^{x} c_8\Cb_M e^{\sqrt{2}(x+y)} e^{-\sqrt{2} y}dy= 2c_8\Cb_M xe^{\sqrt{2} x}.
\]
Then, Markov inequality and Borel-Cantelli Lemma imply that
\[
\limsup_{x\to\infty}\frac{\sum_{i}\ind{p_i\in [-x,x]}\calD^{i}([-x-p_i,0])}{x^3e^{\sqrt{2}x}}=0, \textrm{ a.s. }
\]
On the other hand, $\sum_{i}\ind{p_i>x}\calD^{i}([-x-p_i,0])\ge1$ if and only if $\sum_{i}\ind{p_i>x}\ge1$. For a Poisson point process $\sum_i\delta_{p_i}$, it is clear that as $x\to\infty$, a.s.,
\[
\sum_{i}\ind{p_i>x}\to 0.
\]
So, $\sum_{i}\ind{p_i>x}\calD^{i}([-x-p_i,0])\to 0$ a.s. as $x\to\infty$. We hence conclude \eqref{roughtailEt}.

\subsection{The space $\M_{3/2}$ is invariant under BBM.}
Next, let us check that started from a point process $\theta_0$ with $\theta_0\in\M_{3/2}$ a.s., for any time $t>0$, we still have $\theta_t\in\M_{3/2}$ a.s. Recall that if $\theta_0=\sum_{i=I}\delta_{x_i}$, then $\theta_t$ can be written as
\[
\theta_t=\sum_{i\in I}\sum_{k=1}^{n^i(t)}\delta_{x_i+\chi_k^i(t)-\sqrt{2}t}
\]
with i.i.d. BBMs $(\{\chi^i_k(t);1\leq k\leq n(t)\}, t\ge0)$. Let $f_\beta(x):=e^{-\beta |x|^{3/2}}$, we are going to show that if $\theta_0\in \M_{3/2}$ a.s., then there exists some $\beta\in(0,1)$ such that
\begin{equation}
\int_{\R} f_\beta(x)\theta_t(dx)<\infty \textrm{ and } \esssup\theta_t<\infty, \textrm {a.s.}
\end{equation}
It is easy to see that $\esssup\theta_t<\infty$ a.s. if and only if $\theta_t(\R_+)<\infty$ a.s. So, we only need to check that 
\begin{equation}
\int_{\R} f_\beta(x)\theta_t(dx)<\infty \textrm{ and } \theta_t(\R_+)<\infty,\textrm{ a.s. }
\end{equation}
In fact, by considering the conditional expectation, it suffices to show that  if $\theta_0\in \M_{3/2}$ a.s., then there exists some $\beta\in(0,1)$ such that
\begin{equation}
\E\left[\int_{\R} f_\beta(x)\theta_t(dx)\Big\vert \theta_0\right]<\infty \textrm{ and } \E[\theta_t(\R_+)\vert \theta_0]<\infty,\textrm{ a.s. }
\end{equation}
We first consider $\theta_t(\R_+)=\sum_{i\in I}\sum_{k=1}^{n^i(t)}\ind{x_i+\chi_k^i(t)-\sqrt{2}t\ge0}$. Its conditional expectation is 
\begin{align}\label{bd-theta-R+}
\E[\theta_t(\R_+)\vert \theta_0]=&\int_\R \E\left[\sum_{k=1}^{n(t)}\ind{x+\chi_k(t)-\sqrt{2}t\ge0}\right]\theta_0(dx)\nonumber\\
\leq & \int_{-\infty}^0 \E\left[\sum_{k=1}^{n(t)}\ind{x+\chi_k(t)-\sqrt{2}t\ge0}\right]\theta_0(dx) +\int_{\R_+}\E[n(t)]\theta_0(dx),
\end{align}
where it is known that $\E[n(t)]=e^t$. So, as $\theta_0\in\M$, a.s.,
\[
\int_{\R_+}\E[n(t)]\theta_0(dx)= e^t \theta_0(\R_+)<\infty, 
\]
Let $(W_t, t\ge0)$ be a standard BM. By the well-known Many-to-One Lemma, one has
\begin{align*}
\E\left[\sum_{k=1}^{n(t)}\ind{x+\chi_k(t)-\sqrt{2}t\ge0}\right]=&\E[n(t)]\P(x+W_t-\sqrt{2}t\ge0)\\
=&e^t\int_{\sqrt{2}t -x}^{\infty} \frac{1}{\sqrt{2\pi t}}e^{-\frac{z^2}{2t}}dz
\end{align*}
which is less than $\frac{e^t}{2\sqrt{\pi t}}e^{-\frac{(\sqrt{2}t-x)^2}{2t}}$ for any $x<0$ and $t>0$. As a result, the first integral on the right hand side of \eqref{bd-theta-R+} is bounded by
\[
 \int_{-\infty}^0\frac{e^t}{2\sqrt{\pi t}}e^{-\frac{(\sqrt{2}t-x)^2}{2t}}\theta_0(dx)\leq  \int_{-\infty}^0 \frac{1}{2\sqrt{\pi t}}e^{-\sqrt{2}|x|-\frac{x^2}{2t}}\theta_0(dx).
\]
As $\theta_0\in \M_{3/2}$ a.s., there exists some $\beta_0=\beta_0(\theta_0)\in(0,1)$ such that 
\[
\int_{\R} e^{-\beta_0 |x|^{3/2}}\theta_0(dx)<\infty, \textrm{ a.s.}
\]
One sees that if $|x|\ge (2\beta_0t)^2$, $\frac{x^2}{2t}\ge \beta_0 |x|^{3/2}$. Therefore,
\begin{align*}
&\int_{-\infty}^0 \E\left[\sum_{k=1}^{n(t)}\ind{x+\chi_k(t)-\sqrt{2}t\ge0}\right]\theta_0(dx) \le \int_{-\infty}^0 \frac{1}{2\sqrt{\pi t}}e^{-\sqrt{2}|x|-\frac{x^2}{2t}}\theta_0(dx)\\
\leq & \frac{1}{2\sqrt{\pi t}}\int_{-\infty}^{-(2\beta_0t)^2}e^{-\beta_0 |x|^{3/2}}\theta_0(dx)+  \frac{1}{2\sqrt{\pi t}}\theta_0([-(2\beta_0t)^2,0])
\end{align*}
which is finite a.s. Going back to \eqref{bd-theta-R+}, one obtains that $\E[\theta_t(\R_+)\vert \theta_0]<\infty$ a.s. and thus deduces that $\theta_t(\R_+)<\infty$ a.s.

Similarly, for $\E\left[\int_{\R} f_\beta(x)\theta_t(dx)\Big\vert \theta_0\right]$, by the Many-to-One Lemma, one has
\begin{align*}
\E\left[\int_{\R} f_\beta(x)\theta_t(dx)\Big\vert \theta_0\right]=& \int_\R \E\left[\sum_{k=1}^{n(t)}f_\beta(x+\chi_k(t)-\sqrt{2}t)\right]\theta_0(dx)\\
\leq & \int_{-\infty}^{-A_t} e^t \E[e^{-\beta |x+W_t-\sqrt{2}t|^{3/2}}] \theta_0(dx)+ e^t \theta_0([-A_t,+\infty))
\end{align*}
for any $A_t>0$. Apparently, $e^t \theta_0([-A_t,+\infty))<\infty$ a.s. It remains to study $\int_{-\infty}^{-A_t} e^t \E[e^{-\beta |x+W_t-\sqrt{2}t|^{3/2}}] \theta_0(dx)$. In fact, 
\begin{align*}
\int_{-\infty}^{-A_t} e^t \E[e^{-\beta |x+W_t-\sqrt{2}t|^{3/2}}] \theta_0(dx)& =\int_{-\infty}^{-A_t} e^t \int_\R e^{-\beta |x+z-\sqrt{2}t|^{3/2}} \frac{1}{\sqrt{2\pi t}}e^{-\frac{z^2}{2t}}dz \theta_0(dx)\\
& =\int_{-\infty}^{-A_t}\frac{1}{\sqrt{2\pi t}}\int_\R e^{-\beta |x+z|^{3/2}-\sqrt{2}z-\frac{z^2}{2t}}dz\theta_0(dx)\,.
\end{align*}
Now by a tedious but standard Laplace method, we claim that one can choose $A_t$ sufficiently large so that for all $|x|\ge A_t$ one has 
\[
\frac{1}{\sqrt{2\pi t}}\int_\R e^{-\beta |x+z|^{3/2}-\sqrt{2}z-\frac{z^2}{2t}}dz\le C_t e^{-\beta |x|^{3/2}+K_t |x|}
\]
with some $C_t>0$ and $K_t>0$. We may now take $\beta\in(\beta_0,1)$ so that $\beta_0|x|^{3/2}\le \beta |x|^{3/2}-K_t |x|$ for $|x|$ sufficiently large. We then end up with
\[
 \int_{-\infty}^{-A_t} e^t \E[e^{-\beta |x+W_t-\sqrt{2}t|^{3/2}}] \theta_0(dx)\leq  \int_{-\infty}^{-A_t} C_t e^{-\beta_0|x|^{3/2}}\theta_0(dx)<\infty, \textrm{ a.s.}
\]
This completes the proof.  \qed

\section{Concluding remarks and questions}\label{s.conclusion}

\subsection{Extending Kabluchko's analysis of super-critical drift.}\label{non-critical-drift}
We have considered through this paper the BBM resulting from the Brownian motion with the drift $\pm\sqrt{2}$. If we study the BBM with drift $\lambda $ for some $|\lambda |> \sqrt{2}$, the problem of characterizing the corresponding fixed points was addressed by Kabluchko in \cite{kabluchko}. The author showed the existence and uniqueness of fixed points with an assumption of locally finite intensity measure. Interestingly, as in \cite{liggett78}, his proof is also based on the Choquet-Deny convolution equation \cite{ChoquetDeny}, \cite{Deny}, though its use is quite different. Indeed Choquet-Deny equation is shown to be satisfied by the intensity measure of any fixed point. 

However, let us point out that for BBM with critical drift, the intensity measure $\E[\theta(\cdot)]$ of any (non-zero) fixed point $\theta$ is infinite on any interval. In particular Kabluchko's argument fails to work at criticality. Our new idea in this paper is tailor-made to circumvent this issue. Moreover, we believe our method may 
also be generalized to the case with super-critical drift $\lambda > \sqrt{2}$ and may as such provide a new proof of Kabluchko's theorem without the assumption of locally finite intensity measure. 

\subsection{A short inspection of the lattice case for BRWs.}\label{ss.BRW}

It is natural to consider the same question for branching random walks (BRW) in discrete time, i.e. to find the fixed points of the particles system obtained by attaching to each atom of some point process an independent BRW with critical drift. Usually, we take a BRW $(\{S_u; |u|=n\}, n\ge0)$ in the boundary case (see e.g. \cite{BK05,shi-book}) where $M_n$, the minimum of BRW has zero velocity. It has been proved by A\"id\'ekon \cite{Aid13} that if the walks are not lattice, then under some mild condition, $M_n-\frac32 \log n$ converges in law to some random variable. Later, Madaule \cite{Mad16} showed that $\sum_{|u|=n}\delta_{S_u-\frac32\log n}$ converges in law to some limiting extremal point process which also has the decoration structure as in BBM case. Thanks to these properties, we believe that our techniques may apply to show in this non-lattice setting that any fixed point should correspond to the above extremal point process with some shift.

More interestingly, the question of existence/uniqueness of fixed point also makes perfect sense in the lattice case. Let us say a few words on this intriguing situation.
For a BRW $(\{S_u; |u|=n\}, n\ge0)$ in the lattice case (say in $\Z$) and in the boundary case, we still have the tightness of $M_n-\frac32\log n$, but the weak convergence fails. However, we expect that we should still have the convergence in law of $\sum_{|u|=n}\delta_{S_u-M_n}$ conditioned on $M_n$ being extremely small/large. If so, this means that the limiting decoration process can be defined. To proceed further, we would actually need the joint convergence of $M_n+a_n$ and $\sum_{|u|=n}\delta_{S_u-M_n}$ conditioned on $M_n\le -a_n$ with $a_n\in\mathbb{N}$ of order $\sqrt{n}$. Following the ideas of \cite{ABK13}, it is natural to start from a Poisson point process on $\N$ with a suitable intensity  $\mu$ so that $\sum_{k\in\N}\P(M_n \le -k)\mu(k)\approx \sum_{k=\delta\sqrt{n}}^{\sqrt{n}/\delta}\P(M_n\le -k)\mu(k)$ converges to some positive constant. Then as time goes on, the particle system should stabilize.  One would end up this way with some limiting point process which may serve as an equilibrium measure for this particle system.  

In this way, we expect the following structure for the fixed points in the lattice case: we first sample a Poisson point process with intensity $\sum_{k\in \Z} C p(k)\delta_k$ where
\[
p(k)=\lim_{n\to\infty}\frac{\P(M_n+a_n=-k)}{\P(M_n+a_n\le 0)}  \text{ (where recall that $a_n$ is of order $\sqrt{n}$)} 
\]
comes from the conditioned law of $M_n+a_n$ given $M_n\le -a_n$. Here $C>0$
could be a positive constant or a positive random variable. To each atom, we then attach an independent decoration process which is the limit in law of $\sum_{|u|=n}\delta_{S_u-M_n}$ conditioned on $M_n\le -a_n$.

Due to this expected structure, we feel that this point of view of {\em fixed points} is particularly well adapted to the study of BRWs in the lattice case.

Let us summarize the above discussion into the following open question.
\begin{question}
Show both for non-lattice and  lattice BRWs that fixed point exists and are all given by Poisson decorated point processes. 
\end{question}

\begin{figure}[!htp]
\begin{center}
\includegraphics[width=0.9\textwidth]{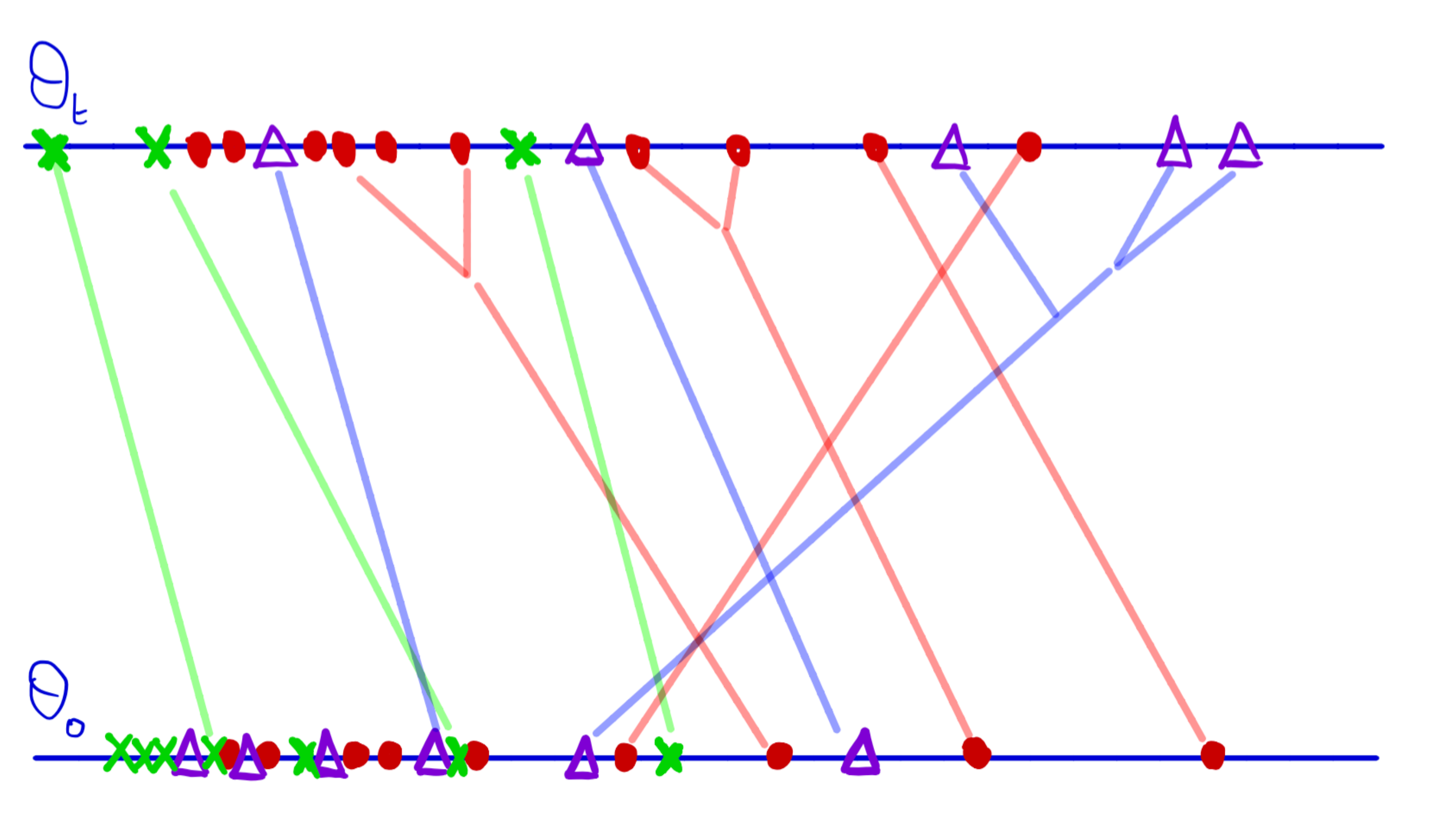}
\end{center}
\caption{}\label{f.shuffle}
\end{figure}
\subsection{Fixed points modulo translations.}\label{ss.aizenman}

In the works \cite{AR05,AA09}, the authors analyze the so-called {\em quasi-stationary} point processes which generalize Liggett's invariant point processes from \cite{liggett78}. To define what they are, let us denote a point process $\theta\in \M$ (see Section \ref{ss.state}) as $\theta= \sum_{i=1}^\infty \delta_{x_i}$ (we only treat the countable case here as there are no finite quasi-stationary states besides $0$). A {\em quasi-stationary} state is a point process $\theta$ for which the joint law of the gaps 
\[
\{X_i-X_{i+1}\}_{i\geq 1}\,,
\] 
remains invariant under the considered dynamics. Equivalently {\em quasi-stationary} states are invariant point processes viewed modulo translations. In \cite{AR05,AA09} such {\em quasi-stationaty} states are characterized for Liggett's non-interacting diffusions as well as generalizations where correlations are introduced between particles depending on their respective ranks. In the non-interacting case, it is shown in \cite{AR05} that the only fixed points are again given by superpositions of Liggett's fixed points. 

In our present setting, it is therefore a natural question to ask what are the {\em quasi-invariant} states of branching Brownian motion. Note that we do not need to specify a drift in this case as we view point processes modulo translations and this has the effect to quotient out global drifts. By definition any {\em invariant} point process must be {\em quasi-invariant} but the reverse may not be true.  We conjecture though that for BBM, the situation is as for Liggett's non-interacting diffusions \cite{AR05}. Namely,
 
\begin{question}
Show that all point processes $\theta$ viewed modulo translations and which are invariant under BBM (no need to precise a drift here as we view the point process modulo translations) are given by Kabluchko's super-critical fixed points together with our critical fixed points (and nothing else). 
\end{question}

Let us make an important comment here. There is a greater variety of possible candidates for other types of {\em quasi-invariant} states in the case of BBM: the main one being the decoration processes $\bar \calE_\infty$ or $\calE_\infty$ (when viewed modulo translations they lead to the same gap process). One may believe at first sight that the decoration process could lead to a {\em quasi-invariant} state. Indeed if one looks at Figure \ref{f.shuffle}, the law of the point process made of red dots and triangular dots at times $0$ and $t$ is the same (starting from a fixed point $\theta_0$). More precisely the joint law of the first two leaders together with their decoration processes are the same at times 0 and time $t$. In the same fashion, the joint law of triangles and crosses at time 0 is the same as the joint law of red dots and crosses at time $t$ (again in the sense that second and third leader together with their decoration are invariant in law). One may readily conclude from this set of equalities in law that  a leader together with its decoration ancestors should lead to a {\em quasi-invariant} state. (Say, on the figure the red dots viewed modulo translations). It is not hard to show that it is in fact not true. Note that this does not contradict the above identities as one should also take into account the effect of the random permutation $\sigma_t$ from time $0$ to time $t$ which reshuffles the order between leaders. The fact $\calE_\infty$ is now excluded from the possible {\em quasi-invariant} point processes gives more support to the above open question. 

\subsection{Remaining questions.}
We end this paper with some remaining open questions.

\begin{question}\label{q.calN}
Prove our main Theorem \ref{main-thm} without our assumption that point processes $\theta$ must have a top particle. I.e. characterize all fixed-points supported in $\calN$ rather than in $\M\subset \calN$. 
\end{question}


\begin{question}
Identify the basin of attraction of the fixed points of BBM with critical drift, say inside the space $\M_{3/2}$ introduced in Definition \ref{d.M32}. 
\end{question}

\begin{question}
In the context of random matrices and determinantal processes, show that the only fixed points to the Markov processes introduced by Najnudel and Virag in \cite{najnudel} are given by  the $\mathrm{Sine}_\beta$ point processes. 
\end{question}

\bibliographystyle{alpha}
\bibliographystyle{acm}	
\bibliography{biblio}

\end{document}